\newtheorem{theorem}{Theorem}
\newtheorem{corollary}{Corollary}
\newtheorem{lemma}[theorem]{Lemma}
\newtheorem{proposition}{Proposition}
\newtheorem{remark}{Remark}
\newenvironment{proof}[1][Proof]{\noindent\textbf{#1.} }{\ \rule{0.5em}{0.5em}}
\begin{document}

\begin{frontmatter}

\title{On polynomials associated with an Uvarov modification of a quartic potential Freud-like weight.}
\author{Alejandro Arceo$^{1}$, Edmundo J. Huertas$^{2,*}$, Francisco Marcell\'an$^{3}$
\\[2mm]
$^{1}$Departamento de Matem\'aticas\\
Universidad Carlos III de Madrid, Avda. de la Universidad, 30, 28911 Legan\'{e}s, Madrid, Spain.\\
grarceo@gmail.com\\
[3mm] $^{2}$ Departamento de F\'{i}sica y Matem\'{a}ticas\\
Universidad de Alcal\'{a}, Ctra. Madrid-Barcelona, Km. 33.600, 28871 Alcal\'{a} de Henares, Madrid, Spain.\\
edmundo.huertas@uah.es, ehuertasce@gmail.com\\
[3mm] $^{3}$ Instituto de Ciencias Matem\'{a}ticas (ICMAT) y Departamento de Matem\'aticas\\
Universidad Carlos III de Madrid, Avda. de la Universidad, 30, 28911 Legan\'{e}s, Madrid, Spain.\\
pacomarc@ing.uc3m.es}


\begin{abstract}
In this contribution we consider sequences of monic polynomials orthogonal with respect to the standard Freud-like inner product involving a quartic potential
\begin{equation*}
\left\langle p,q\right\rangle _{M}=\int_{\mathbb{R}%
}p(x)q(x)e^{-x^{4}+2tx^{2}}dx+Mp(0)q(0).
\end{equation*}%
We analyze some properties of these polynomials, such as the ladder operators and the holonomic equation that they satisfy and, as an application, we give an electrostatic interpretation of their zero distribution in terms of a logarithmic potential interaction under the action of an external field. It is also shown that the  coefficients of their three term recurrence relation satisfy a nonlinear difference string equation. Finally, an equation of motion for their zeros in terms of their dependence on $t$ is given.
\end{abstract}

\begin{keyword}
Orthogonal polynomials $\circ$ Freud-like
weights $\circ$ Logarithmic potential $\circ$ String equation $\circ$ Semi-classical linear functional.

\smallskip

\MSC 33C45 \sep 33C47

{\footnotesize * Corresponding author.}
\end{keyword}
\end{frontmatter}



\section{Introduction}

\label{[SECTION-1]-Intro}



Let us consider the so called Freud-like inner products%
\begin{equation*}
\left\langle p,q\right\rangle _{t}=\int_{\mathbb{R}}p(x)q(x)d\mu
_{t}(x),\quad p,q\in \mathbb{P},
\end{equation*}%
where $d\mu _{t}(x)=\omega _{t}(x)dx=e^{-V_{t}(x)}dx$ is a positive,
nontrivial Borel measure supported in the whole real line $\mathbb{R}$. The
properties of such sequences of polynomials are known only for certain
values of the external potential $V_{t}(x)$. Thus, a well studied example in
the literature corresponds to the case $V_{t}(x)=x^{4}-2tx^{2}$, which leads
to the inner product%
\begin{equation}
\langle p,q\rangle _{t}=\int_{\mathbb{R}}p(x)q(x)e^{-x^{4}+2tx^{2}}dx,\quad
p,q\in \mathbb{P}.  \label{Fb-InnerProduct}
\end{equation}
Here $t\in \mathbb{R}_{+}$ is a parameter, which, in other contexts, can be
interpreted as the \textquotedblleft time\textquotedblright \- variable. We
denote by $\{F_{n}^{t}\}_{n\geqslant 0}$ the corresponding sequence of monic
orthogonal polynomials (SMOP, in short) which constitutes a family of
semi-classical orthogonal polynomials (see \cite{M-ACAM91}, \cite{S-DMJ39}),
because $V_{t}(x)$ is differentiable in $\mathbb{R}$\ (the support of $\mu
_{t}$), and the weight function satisfies the distributional equation, known
in the literature as Pearson equation (see \cite{V-WS07})%
\begin{equation*}
\lbrack \omega _{t}(x)]^{\prime }=(-4x^{3}+4tx)\omega _{t}(x).
\end{equation*}
where \textquotedblleft $\;{}^{\prime }$ \textquotedblright $\,$ denotes
derivative with respect to the variable $x$.

We must point out that the class of such a family is $s=2$, and that several
SMOP related to (\ref{Fb-InnerProduct}) have been studied in the literature.
For example, for $t=0$ the asymptotic behavior of the corresponding SMOP has
been deeply analyzed in \cite{N-CMS83}. In \cite{BV-PAMS10} a
Riemann-Hilbert approach is done to find several semi-classical asymptotic
results for these polynomials. As an application of the semi-classical
asymptotics of the orthogonal polynomials, the authors prove the
universality of the local distribution of eigenvalues in the matrix model
with the double-well quartic interaction in the presence of two cuts. In 
\cite{DK-N06}\ the authors study the varying quartic Freud-like weight $%
e^{-V(x)}$, with $V(x)=N\left( t\frac{x^{4}}{4}+\frac{x^{2}}{2} \right) $
for $t<0$, where the orthogonality takes place on certain contours of the
complex plane. They use the Deift/Zhou steepest descent analysis of the
Riemann-Hilbert problem associated with the corresponding polynomials, to
present an alternative and a more direct proof of the Fokas, Its and Kitaev
result, showing that there exists a critical value for $t$ around which the
asymptotics of the recurrence coefficients are described in terms of exactly
specified solutions of the Painlevé I equation. In \cite{M-JAT86} the author
explores the nonlinear difference equation satisfied by the coefficients in
the three term recurrence relations for polynomials orthogonal with respect
to exponential weights, and in \cite{M-SIDE99}\ the author finds the
relation of such Freud's equations with discrete Painlevé equations for
certain potentials, as $V(x)=x^{2}$, $V(x)=\alpha x^{4}+\beta x^{2}$\ or $%
V(x)=x^{6}$, among others. The survey \cite{L-AAM87} presents analytic
aspects of general orthogonal polynomials associated with several
exponential weights on finite and infinite intervals. The works \cite%
{DKMZ-CPAM99-1} and \cite{DKMZ-CPAM99-2} deal with uniform and strong
asymptotics for polynomials orthogonal with respect to certain exponential
weights, and their connections with the random matrix theory. Magnus showed
in \cite{M-JCAM95}\ that the recurrence coefficients of semi-classical
orthogonal polynomials are shown to be solutions of nonlinear differential
equations with respect to a well-chosen parameter, according to principles
established by D. Chudnovsky and G. Chudnovsky.


In the linear space $\mathbb{P}$ of polynomials with real coefficients, let
us introduce the following inner product%
\begin{eqnarray}
\langle p,q\rangle &=&\int_{\mathbb{R}}p\left( x\right) q\left( x\right)
\omega _{t}(x)dx+Mp(0)q(0)  \label{Freud-typeInnProd} \\
&=&\left\langle p,q\right\rangle _{t}+Mp(0)q(0),  \notag
\end{eqnarray}%
with $M\in \mathbb{R}_{+}\,$. We denote by $\{Q_{n}^{t}\}_{n\geqslant 0}$
the SMOP with respect to the above inner product. $\{Q_{n}^{t}\}_{n\geqslant
0}$ is said to be the sequence of monic polynomials orthogonal with respect
to the measure $d\nu (x)=d\mu _{t}(x)+M\,\delta _{0}(x),$ where $\delta
_{0}(x)$ is the Dirac delta function at $x=0$. These polynomials are known
as \textit{Freud-type} or \textit{Freud-Krall} orthogonal polynomials. It is
clear that $\{Q_{n}^{t}\}_{n\geqslant 0}$ is a standard sequence, since the
operator of multiplication by $x$ is symmetric with respect to such an inner
product, i.e. $\langle xp,q\rangle =\langle p,xq\rangle $, for every $p,q\in 
\mathbb{P}$.

This inner product was first studied in \cite{GAM-ETNA05} in the particular
case $t=0$. There, the authors introduce a Dirac mass point at $x=0$ and
provide the holonomic second order linear differential equation with varying
polynomial coefficients. They also give an electrostatic interpretation for
the distribution of zeros of the corresponding orthogonal polynomials.

Following this initial work, the aim of our contribution is twofold. First,
we generalize the electrostatic interpretation of the zero distribution in
terms of a logarithmic potential interaction under the action of an external
field for values of $t$ greater than zero. Second, we derive an equation of
motion for the distribution of the zeros of the corresponding Freud-type
orthogonal polynomials.

The remainder of this Section will be devoted to provide some basic
structural properties of $\{F_{n}^{t}\}_{n\geqslant 0}$ to be used in the
sequel.



\begin{proposition}
\label{FbMOPS}Let $\{F_{n}^{t}\}_{n\geqslant 0}$ denote the sequence of
Freud monic polynomials orthogonal with respect to (\ref{Fb-InnerProduct}).
The following statements hold.

\begin{enumerate}
\item Three term recurrence relation. For every $n\in \mathbb{N}$, the
recurrence relation for $\{F_{n}^{t}\}_{n\geqslant 0}$, is given by (see 
\cite{FVZ-JPA12})%
\begin{equation}
xF_{n}^{t}(x)=F_{n+1}^{t}(x)+a_{n}^{2}(t)F_{n-1}^{t}(x)  \label{3TRR-Monic}
\end{equation}%
with $F_{-1}^{t}(x)=0$ and $F_{0}^{t}(x)=1$. An important feature of these
polynomials is that the recurrence coefficients $a_{n}(t)$\ in the above
three term recurrence relation satisfy the following nonlinear difference
equation%
\begin{equation}
4a_{n}^{2}(t)\left( a_{n-1}^{2}(t)+a_{n}^{2}(t)+a_{n+1}^{2}(t)-t\right)
=n,\quad n\geqslant 1,  \label{StringEq0}
\end{equation}%
with $a_{0}^{2}(t)=0$ and $a_{1}^{2}(t)=||x||_{t}^{2}/||1||_{t}^{2}$. This
is known in the literature as the \textit{string equation} or \textit{Freud
equation }(see \cite{FVZ-JPA12}, \cite[(3.2.20)]{Ism05}, among others).
Moreover, notice that $\omega _{t}(x)$\ is an even weight, i.e. $%
\{F_{n}^{t}\}_{n\geqslant 0}$ is a symmetric sequence of orthogonal
polynomials.

\item We will denote by%
\begin{equation*}
||F_{n}^{t}||_{t}^{2}=\left\langle F_{n}^{t},F_{n}^{t}\right\rangle
_{t}=\int_{\mathbb{R}}[F_{n}^{t}(x)]^{2}e^{-x^{4}+2tx^{2}}dx.
\end{equation*}%
the corresponding norm. Hence%
\begin{equation*}
\zeta _{n}(t)=a_{1}^{2}(t)\cdots a_{n}^{2}(t)>0,
\end{equation*}%
with $||F_{n}^{t}||_{t}^{2}=\zeta _{n}(t)\,||1||_{t}^{2}$.

\item Evolution equation. The recurrence coefficients $a_{n}(t)$\ in (\ref%
{3TRR-Monic}) satisfy%
\begin{equation*}
\dot{a}_{k}(t)=a_{k}(t)\,\left( a_{k+1}^{2}(t)-a_{k-1}^{2}(t)\right) ,\quad
k\geqslant 1,
\end{equation*}%
where \textquotedblleft dot\textquotedblright\ denotes the derivative with
respect to time variable $t>0$\ (see \cite[(1.12)]{ABM-JCAM97},\cite[(2.4)]%
{M-AM75}).

\item Structure relation (\cite[Th. 3.2.1, and p.57]{Ism05}). For every $%
n\in \mathbb{N}$,%
\begin{equation}
\lbrack F_{n}^{t}(x)]^{\prime }=a(x,t;n)F_{n}^{t}(x)+b(x,t;n)F_{n-1}^{t}(x),
\label{[S2]-StructRelation}
\end{equation}%
where%
\begin{equation*}
\begin{array}{l}
a(x,t;n)=-4a_{n}^{2}(t)\,x, \\ 
b(x,t;n)=4a_{n}^{2}(t)\left( x^{2}-t+a_{n}^{2}(t)+a_{n+1}^{2}(t)\right) .%
\end{array}%
\end{equation*}

\item Holonomic equation (\cite[Th. 3.2.3, and p.57]{Ism05}). The SMOP $%
\{F_{n}^{t}\}_{n\geqslant 0}$\ satisfies the second order linear
differential equation 
\begin{equation}
\lbrack F_{n}^{t}(x)]^{\prime \prime }+R_{n}^{t}(x)[F_{n}^{t}(x)]^{\prime
}+S_{n}^{t}(x)F_{n}^{t}(x)=0,  \label{EcDiLi}
\end{equation}%
where%
\begin{eqnarray*}
R_{n}^{t}(x) &=&-4\big(x^{3}-tx\big)-\frac{2x}{%
x^{2}-t+a_{n}^{2}(t)+a_{n+1}^{2}(t)}, \\
S_{n}^{t}(x) &=&4a_{n}^{2}(t)\left[ 4x^{2}\left(
a_{n-1}^{2}(t)+a_{n}^{2}(t)+a_{n+1}^{2}(t)-t-{\frac{2}{%
x^{2}-t+a_{n}^{2}(t)+a_{n+1}^{2}(t)}}\right) \right. \\
&&\biggl.+\big(a_{n}^{2}(t)+a_{n+1}^{2}(t)-t\big)\big(%
a_{n-1}^{2}(t)+a_{n}^{2}(t)-t\big)+1\biggr].
\end{eqnarray*}
\end{enumerate}
\end{proposition}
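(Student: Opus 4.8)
The plan is to establish the five items one after another, in each case reducing everything to standard manipulations with the Pearson equation $[\omega_t(x)]'=(-4x^3+4tx)\omega_t(x)$, the three term recurrence relation (\ref{3TRR-Monic}), and the evenness of $\omega_t$. For item~1, the absence of a middle term in (\ref{3TRR-Monic}) is immediate from the symmetry of $\langle\cdot,\cdot\rangle_t$: the substitution $x\mapsto-x$ shows $F_n^t(-x)=(-1)^n F_n^t(x)$, so $\langle xF_n^t,F_n^t\rangle_t=0$. To obtain the string equation (\ref{StringEq0}) I would compute $\langle[F_n^t]',F_{n-1}^t\rangle_t$ in two ways. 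Since $[F_n^t]'=nx^{n-1}+\cdots$ equals $n$ times a monic polynomial of degree $n-1$, orthogonality gives $\langle[F_n^t]',F_{n-1}^t\rangle_t=n\,\|F_{n-1}^t\|_t^2$; on the other hand, integrating by parts (the boundary terms vanish because $\omega_t$ decays super-exponentially) and using the Pearson equation turns this inner product into $\langle(4x^3-4tx)F_n^t,F_{n-1}^t\rangle_t$, the term $\langle F_n^t,[F_{n-1}^t]'\rangle_t$ dropping by degree. Iterating (\ref{3TRR-Monic}) to expand $x^3F_n^t$ and picking out the coefficient of $F_{n-1}^t$, namely $a_n^2(a_{n-1}^2+a_n^2+a_{n+1}^2)$, together with $\langle xF_n^t,F_{n-1}^t\rangle_t=a_n^2\|F_{n-1}^t\|_t^2$, then gives $n=4a_n^2\big(a_{n-1}^2+a_n^2+a_{n+1}^2-t\big)$. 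Item~2 is just the identity $\|F_n^t\|_t^2=a_n^2(t)\,\|F_{n-1}^t\|_t^2$, read off from (\ref{3TRR-Monic}), iterated down to $\|1\|_t^2$.

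For item~3 I would differentiate $\|F_n^t\|_t^2=\langle F_n^t,F_n^t\rangle_t$ with respect to $t$. Because $\partial_t\omega_t=2x^2\omega_t$ and $\dot F_n^t$ has degree at most $n-1$ by monicity, the cross term $\langle\dot F_n^t,F_n^t\rangle_t$ vanishes, leaving $\tfrac{d}{dt}\|F_n^t\|_t^2=2\langle x^2F_n^t,F_n^t\rangle_t=2\big(a_n^2+a_{n+1}^2\big)\|F_n^t\|_t^2$, where the last equality again uses (\ref{3TRR-Monic}). Taking the logarithmic derivative of $a_n^2(t)=\|F_n^t\|_t^2/\|F_{n-1}^t\|_t^2$ yields $\dot a_n^2/a_n^2=2\big(a_{n+1}^2-a_{n-1}^2\big)$, that is, $\dot a_n=a_n\big(a_{n+1}^2-a_{n-1}^2\big)$.

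Items~4 and~5 are specializations of the Chen--Ismail ladder operator formalism \cite{Ism05}, so the real work is the explicit evaluation of the auxiliary data for $v_t(x)=x^4-2tx^2$. The key observation is that $\dfrac{v_t'(x)-v_t'(y)}{x-y}=4\big(x^2+xy+y^2-t\big)$ is a polynomial, so that
\begin{align*}
A_n(x)&=\frac{1}{\|F_n^t\|_t^2}\int_{\mathbb{R}}\frac{v_t'(x)-v_t'(y)}{x-y}[F_n^t(y)]^2\,\omega_t(y)\,dy,\\
B_n(x)&=\frac{1}{\|F_{n-1}^t\|_t^2}\int_{\mathbb{R}}\frac{v_t'(x)-v_t'(y)}{x-y}F_n^t(y)F_{n-1}^t(y)\,\omega_t(y)\,dy
\end{align*}
are polynomials of degree at most two whose coefficients are the ``moments'' $\int y^j[F_n^t]^2\omega_t$ and $\int y^jF_n^tF_{n-1}^t\omega_t$ for $j=0,1,2$. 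These are computed from the evenness of $\omega_t$ (all odd moments vanish) and (\ref{3TRR-Monic}); in particular $\int y^2[F_n^t]^2\omega_t=(a_n^2+a_{n+1}^2)\|F_n^t\|_t^2$, $\int yF_n^tF_{n-1}^t\omega_t=\|F_n^t\|_t^2$ and $\int y^2F_n^tF_{n-1}^t\omega_t=0$, whence $A_n(x)=4\big(x^2-t+a_n^2+a_{n+1}^2\big)$ and $B_n(x)=4a_n^2(t)\,x$. The lowering relation $[F_n^t]'=-B_n(x)F_n^t+a_n^2(t)A_n(x)F_{n-1}^t$ is then precisely (\ref{[S2]-StructRelation}). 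For the holonomic equation I would differentiate (\ref{[S2]-StructRelation}) and express $[F_{n-1}^t]'$ through $F_n^t$ and $F_{n-1}^t$ by applying (\ref{[S2]-StructRelation}) at index $n-1$ and eliminating $F_{n-2}^t$ via (\ref{3TRR-Monic}); this gives the companion raising relation $[F_{n-1}^t]'=\big(v_t'(x)+B_n(x)\big)F_{n-1}^t-A_{n-1}(x)F_n^t$ with $A_{n-1}(x)=4\big(x^2-t+a_{n-1}^2+a_n^2\big)$. Substituting it into the differentiated relation and then eliminating $F_{n-1}^t$ itself by (\ref{[S2]-StructRelation}) leaves a second order equation of the form (\ref{EcDiLi}) in which $R_n^t=-v_t'(x)-b'(x,t;n)/b(x,t;n)$; since $a(x,t;n)=-B_n(x)$ this collapses at once to the stated $R_n^t$, and the zeroth order coefficient $S_n^t$ is read off after clearing the common denominator $x^2-t+a_n^2+a_{n+1}^2$.

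The step I expect to be the main obstacle is this last one: bringing $S_n^t$ into the closed form displayed in (\ref{EcDiLi}). Unlike $R_n^t$, the zeroth order coefficient starts out as a bulky rational expression, and collapsing it to the product $\big(a_n^2+a_{n+1}^2-t\big)\big(a_{n-1}^2+a_n^2-t\big)$ requires grouping terms around the quantity $x^2-t+a_n^2$ and then using the string equation (\ref{StringEq0}) --- and, depending on how the computation is organized, its shifts $n\mapsto n\pm1$, equivalently the Chen--Ismail compatibility conditions linking $A_n$, $B_n$ and $A_{n\pm1}$ --- to rewrite the part independent of $x$. Keeping that algebra under control, and checking the cases $n=1,2$ by hand as a test of the normalizations, is where essentially all the effort lies.
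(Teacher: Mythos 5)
The paper itself offers no proof of Proposition \ref{FbMOPS}: it is stated as a compendium of known facts with pointers to \cite{FVZ-JPA12}, \cite{Ism05}, \cite{ABM-JCAM97}, \cite{M-AM75}. Your proposal therefore supplies arguments where the paper supplies citations, and for items 1--4 those arguments are correct and are essentially the standard ones: the parity argument for the form of \eqref{3TRR-Monic}; Freud's two-fold evaluation of $\langle [F_n^t]',F_{n-1}^t\rangle_t$ (orthogonality on one side, integration by parts plus the Pearson equation and two iterations of \eqref{3TRR-Monic} on the other) for \eqref{StringEq0}; the telescoping of $\|F_n^t\|_t^2=a_n^2(t)\|F_{n-1}^t\|_t^2$ for item 2; differentiation of the norm under $\partial_t\omega_t=2x^2\omega_t$ for the Toda-type evolution; and the Chen--Ismail integrals $A_n,B_n$ for \eqref{[S2]-StructRelation}, with the moments $\int y\,F_n^tF_{n-1}^t\omega_t=\|F_n^t\|_t^2$, $\int y^2[F_n^t]^2\omega_t=(a_n^2+a_{n+1}^2)\|F_n^t\|_t^2$ and the vanishing of the parity-forbidden ones computed exactly as you indicate.

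The one place you stop short is the closed form of $S_n^t$, and it deserves a concrete warning. The computation is shorter than you fear if you use the summed compatibility condition $B_n^2(x)+V_t'(x)B_n(x)+\sum_{j=0}^{n-1}A_j(x)=a_n^2(t)A_n(x)A_{n-1}(x)$, equivalently eliminate $F_{n-1}^t$ exactly as you propose: then $S_n^t=B_n'-B_nA_n'/A_n-B_n^2-V_t'B_n+a_n^2A_nA_{n-1}$, and a single application of \eqref{StringEq0} collapses everything. But the result of that collapse is
\begin{equation*}
S_n^t(x)=4a_n^2(t)\left[4x^2\big(a_{n-1}^2+a_n^2+a_{n+1}^2-t\big)-\frac{2x^2}{x^2-t+a_n^2+a_{n+1}^2}+4\big(a_n^2+a_{n+1}^2-t\big)\big(a_{n-1}^2+a_n^2-t\big)+1\right],
\end{equation*}
which is not literally the expression printed after \eqref{EcDiLi}: there the rational term appears with weight $8x^2$ rather than $2x^2$ and the product term with coefficient $1$ rather than $4$. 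The $n=1$ test you yourself propose (with $F_1^t=x$ the equation reduces to $R_1^t(x)+xS_1^t(x)=0$, using $a_0^2=0$ and $4a_1^2(a_1^2+a_2^2-t)=1$) is satisfied by the display above and fails for the printed formula, so the discrepancy appears to be a misprint in the statement rather than an error in your plan; carry your computation through and trust it over the printed coefficients.
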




The structure of the manuscript is as follows. Section \ref%
{[SECTION-2]-KernelsP} will be devoted to the Freud kernel polynomials. In
Section \ref{[SECTION-3]-ConnForm}, several connection formulas between
Freud-type and Freud polynomials are given and we briefly analyze the
coefficients in the three term recurrence relation for Freud-type
polynomials and a string-type equation for these coefficients. In Section %
\ref{[SECTION-4]-HEq-ElectrModel}, we provide the lowering and raising
operators associated with Freud-type polynomials, the corresponding
holonomic equation, and the electrostatic interpretation of the zeros of
Freud-type polynomials as positive unit charges interacting according to a
logarithmic potential under the action of an external field. Notice that the
second order linear differential equation is obtained as a composition of
the ladder operators following the ideas in \cite{CI-JPA97} for some
absolutely continuous measures and \cite{CG-JPA02} for measures with Dirac
deltas. Nevertheless, we must point out that in our case we consider a
measure with an extra time parameter $t$ and a Dirac mass located at $x=0$.
The explicit expressions of the polynomial coefficients in the holonomic
equation are given in our contribution. Its role in the study of the
electrostatic interpretation is emphasized in this Section. In Section \ref%
{[SECTION-5]-Zeros}, we analyze the zero behavior of Freud and Freud-type
polynomials in terms of the parameter $t$ as well as we deduce some results
concerning the monotonicity and speed of convergence of zeros of Freud-type
polynomials in terms of the mass $M$. Finally, in Section \ref%
{[SECTION-6]-NumExp}, we explore some numerical results showing the location
of certain zeros of Freud-type orthogonal polynomials.


\section{Freud kernel polynomials}

\label{[SECTION-2]-KernelsP}



The $n$-th degree kernel polynomial%
\begin{equation}
K_{n}(x,y;t)=\sum\limits_{k=0}^{n}\frac{F_{k}^{t}(x)F_{k}^{t}(y)}{%
||F_{k}^{t}||_{t}^{2}}  \label{Kernel-n}
\end{equation}%
associated with the polynomial sequence $\{F_{n}^{t}\}_{n\geqslant 0}$ will
play a key role in order to prove some of the basic results of the
manuscript. Notice that if $deg\ p\leqslant n$, then the $n$-th kernel
polynomial satisfies the so-called \textquotedblleft reproducing
property\textquotedblright 
\begin{equation}
\int_{\mathbb{R}}K_{n}\left( x,y;t\right) p\left( x\right) \omega
_{t}(x)dx=p\left( y\right) .  \label{Kn-PropRepro}
\end{equation}%
For $x\neq y$, according to the Christoffel-Darboux formula, for every $n\in 
\mathbb{N}$ we get%
\begin{equation}
K_{n}(x,y;t)=\frac{1}{||F_{n}^{t}||_{t}^{2}}\frac{%
F_{n+1}^{t}(x)F_{n}^{t}(y)-F_{n+1}^{t}(y)F_{n}^{t}(x)}{x-y}  \label{CristDar}
\end{equation}%
together with the confluent expression 
\begin{equation*}
K_{n}(x,x;t)=\sum_{k=0}^{n}\frac{[F_{k}^{t}(x)]^{2}}{||F_{k}^{t}||^{2}}=%
\frac{[F_{n+1}^{t}]^{\prime }(x)F_{n}^{t}(x)-[F_{n}^{t}]^{\prime
}(x)F_{n+1}^{t}(x)}{||F_{n}^{t}||^{2}}.
\end{equation*}%
From the above formula%
\begin{equation*}
K_{n}(0,0;t)=\frac{[F_{n+1}^{t}]^{\prime}(0)F_{n}^{t}(0)-[F_{n}^{t}]^{\prime
}(0)F_{n+1}^{t}(0)}{||F_{n}^{t}||_{t}^{2}},
\end{equation*}%
and also we conclude that $K_{n}(0,0;t)>0$, no matter the degree $n$ or
parity of the kernel, since it is a sum of strictly positive terms.

The weight function $\omega _{t}(x)$ is an even function, so the monic
polynomial sequence $\{F_{n}^{t}(x)\}_{n\geqslant 0}$ is symmetric (see \cite%
[Ch. I]{Chi78}) and the following expressions hold%
\begin{equation}
\left\{ 
\begin{array}{rcl}
F_{n}^{t}(0)=[F_{n}^{t}]^{\prime \prime }(0)=0, &  & \text{if }n\text{\ is
odd,} \\ 
&  &  \\ 
\lbrack F_{n}^{t}]^{\prime }(0)=[F_{n}^{t}]^{\prime \prime \prime }(0)=0, & 
& \text{if }n\text{\ is even.}%
\end{array}%
\right.  \label{F(0)Nulos}
\end{equation}%
From (\ref{Kernel-n}), (\ref{CristDar}) and (\ref{F(0)Nulos}) we have for $%
m\geq 1$%
\begin{equation}
\begin{array}{rll}
K_{2m-1}(x,0;t) & =\medskip & K_{2m-2}(x,0;t), \\ 
K_{2m}(x,0;t) & = & {\displaystyle\frac{1}{||F_{2m}^{t}||_{t}^{2}}\frac{%
F_{2m}^{t}(0)F_{2m+1}^{t}(x)}{x}}%
\end{array}
\label{KoddZERO}
\end{equation}%
and, therefore, all $K_{n}(x,0;t)$ has even degree for any $n$. It is
obvious that%
\begin{equation}
\begin{array}{rll}
K_{2m-1}(0,0;t) & =\medskip & K_{2m-2}(0,0;t), \\ 
K_{2m}(0,0;t) & = & {\displaystyle\frac{[F_{2m+1}^{t}]^{\prime
}(0)F_{2m}^{t}(0)}{||F_{2m}^{t}||_{t}^{2}}.}%
\end{array}
\label{[S2]-Bcn}
\end{equation}


From now on, $\{F{_{n}^{t,[k]}\}}_{n\geq 0}$ will denote the SMOP with
respect to the inner product%
\begin{equation}
\langle p,q\rangle _{t,[k]}=\int_{\mathbb{R}%
}p(x)q(x)x^{k}e^{-x^{4}+2tx^{2}}dx  \label{[S1]-InnProd-2}
\end{equation}%
which is a polynomial modification of the measure $d\mu
_{t}(x)=e^{-x^{4}+2tx^{2}}dx$ called the $k$\textit{-iterated Christoffel
perturbation}. If $k=1$ we have the \textit{Christoffel canonical
transformation of the measure }$\mu _{t}$\textit{\ }(see \cite{Z-JCAM97}, 
\cite{Y-BKMS02} among others). The SMOP $\{F{_{n}^{t,[k]}\}}_{n\geq 0}$ is
known as the \textit{monic }$k$\textit{-iterated Freud kernel polynomials}.
In this particular case, due to the parity of the weight function, when $k$
is odd the perturbed measure is not quasi-definite and, as consequence, the
corresponding SMOP does not exist. On the contrary, when $k$ is even the
perturbed measure is still positive definite, and therefore the
corresponding $k$-iterated Freud kernel polynomials are well defined. We
will denote by%
\begin{equation*}
||F_{n}^{t,[k]}||_{t,[k]}^{2}=\langle F_{n}^{t,[k]},F_{n}^{t,[k]}\rangle
_{t,[k]}=\int_{\mathbb{R}}|F_{n}^{t,[k]}(x)|^{2}x^{k}e^{-x^{4}+2tx^{2}}dx,%
\quad k\text{ even},
\end{equation*}%
the corresponding norm.






Let $\{F_{n}^{t,[2]}(x)\}_{n\geq 0}$ denote the sequence of $2$-iterated
monic Freud kernel polynomials, or the SMOP associated with the measure $%
d\mu _{t}^{[2]}=x^{2}d\mu _{t}$ which is the $2$\textit{-iterated
Christoffel transformation of }$\mu _{t}$.

The following two lemmas will be useful in the sequel.



\begin{lemma}
\label{CFKer[2]} The $2$-iterated Freud kernel polynomials and the Freud
orthogonal polynomials satisfy the connection formulas,%
\begin{eqnarray}
x^{2}F_{2m-1}^{t,[2]}(x) &=&F_{2m+1}^{t}(x)+\xi
_{2m-1}^{2}\,F_{2m-1}^{t}(x),\quad m\geq 1,  \label{Kernel[2]CFodd} \\
x^{2}F_{2m}^{t,[2]}(x) &=&F_{2m+2}^{t}(x)+\xi _{2m}^{2}\,F_{2m}^{t}(x),\quad
m\geq 1,  \label{Kernel[2]CFeven}
\end{eqnarray}%
where%
\begin{eqnarray*}
\xi _{2m-1}^{2} &=&\frac{||F_{2m-1}^{t,[2]}||_{t,[2]}^{2}}{%
||F_{2m-1}^{t}||_{t}^{2}}=\frac{-[F_{2m+1}^{t}]^{\prime }(0)}{%
[F_{2m-1}^{t}]^{\prime }(0)}, \\
\xi _{2m}^{2} &=&\frac{||F_{2m}^{t,[2]}||_{t,[2]}^{2}}{||F_{2m}^{t}||_{t}^{2}%
}=\frac{-F_{2m+2}^{t}(0)}{F_{2m}^{t}(0)}.
\end{eqnarray*}%
Furthermore, 
\begin{eqnarray}
x^{2}F_{2m-1}^{t,[2]}(x) &=&xF_{2m}^{t}(x)+\left[
\xi_{2m-1}^{2}-a_{2m}^{2}(t)\right] \,F_{2m-1}^{t}(x),  \label{Kernel[2]odd}
\\
xF_{2m}^{t,[2]}(x) &=&F_{2m+1}^{t}(x),  \label{Kernel[2]even}
\end{eqnarray}%
where%
\begin{equation}
\left[ \xi _{2m-1}^{2}-a_{2m}^{2}(t)\right] >0.  \label{coefPos}
\end{equation}
\end{lemma}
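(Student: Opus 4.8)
The plan is to work with the fact that $F_n^{t,[2]}$ is the sequence of monic polynomials orthogonal with respect to the Christoffel-perturbed measure $d\mu_t^{[2]}=x^2\,d\mu_t$, and to use that both $\{F_n^t\}_{n\ge0}$ and $\{F_n^{t,[2]}\}_{n\ge0}$ are symmetric sequences (the latter because the weight $x^2\omega_t$ is again even). First I would prove (\ref{Kernel[2]CFodd})--(\ref{Kernel[2]CFeven}). The polynomial $x^2F_n^{t,[2]}(x)$ is monic of degree $n+2$, and for $0\le j\le n-1$ we have $\langle x^2F_n^{t,[2]},x^j\rangle_t=\langle F_n^{t,[2]},x^j\rangle_{t,[2]}=0$. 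Expanding $x^2F_n^{t,[2]}$ in the Freud basis $\{F_k^t\}$, the coefficients of $F_0^t,\dots,F_{n-1}^t$ vanish by this orthogonality, while the coefficient of $F_{n+1}^t$ vanishes on parity grounds, since $x^2F_n^{t,[2]}$ has parity $(-1)^n$ and $F_{n+1}^t$ has the opposite parity. This gives $x^2F_n^{t,[2]}=F_{n+2}^t+\xi_n^2\,F_n^t$, which is (\ref{Kernel[2]CFodd}) for $n=2m-1$ and (\ref{Kernel[2]CFeven}) for $n=2m$. Taking $\langle\,\cdot\,,F_n^t\rangle_t$ here and using that $F_n^{t,[2]}$ and $F_n^t$ are monic of the same degree (so $\langle F_n^{t,[2]},F_n^t\rangle_{t,[2]}=\|F_n^{t,[2]}\|_{t,[2]}^2$) yields $\xi_n^2=\|F_n^{t,[2]}\|_{t,[2]}^2/\|F_n^t\|_t^2>0$. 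For the second expression of each $\xi_n^2$ I would evaluate the connection formula near the origin: for $n=2m$ just set $x=0$; for $n=2m-1$ first divide by $x$ (legitimate because $F_{2m+1}^t$ and $F_{2m-1}^t$ vanish at $0$ by (\ref{F(0)Nulos})) and let $x\to0$, using that $F_k^t(x)/x$ equals $[F_k^t]'(0)$ at $x=0$.

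Next I would establish (\ref{Kernel[2]even}): $xF_{2m}^{t,[2]}(x)$ is monic of degree $2m+1$, and for $0\le j\le 2m$ the quantity $\langle xF_{2m}^{t,[2]},x^j\rangle_t=\int_{\mathbb R}F_{2m}^{t,[2]}(x)\,x^{j+1}\omega_t(x)\,dx$ vanishes: by parity of the integrand when $j$ is even, and because it equals $\langle F_{2m}^{t,[2]},x^{j-1}\rangle_{t,[2]}=0$ when $j$ is odd. Hence $xF_{2m}^{t,[2]}=F_{2m+1}^t$. Then (\ref{Kernel[2]odd}) is immediate: substitute the three term recurrence relation $F_{2m+1}^t(x)=xF_{2m}^t(x)-a_{2m}^2(t)F_{2m-1}^t(x)$ from (\ref{3TRR-Monic}) into (\ref{Kernel[2]CFodd}).

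Finally, for (\ref{coefPos}) I would divide (\ref{Kernel[2]odd}) by $x$ and let $x\to0$, which yields $\xi_{2m-1}^2-a_{2m}^2(t)=-F_{2m}^t(0)/[F_{2m-1}^t]'(0)$, so that the claim reduces to checking that this ratio is positive. I expect this sign bookkeeping to be the only delicate point, everything else being the standard Christoffel-transformation computation. The route I would take uses $K_n(0,0;t)>0$: by the confluent Christoffel--Darboux identity (\ref{[S2]-Bcn}), $[F_{2m+1}^t]'(0)$ and $F_{2m}^t(0)$ have the same sign, and so do $[F_{2m-1}^t]'(0)$ and $F_{2m-2}^t(0)$; evaluating (\ref{3TRR-Monic}) at $x=0$ gives $F_{2m}^t(0)=-a_{2m-1}^2(t)F_{2m-2}^t(0)$, hence $F_{2m}^t(0)$ and $[F_{2m-1}^t]'(0)$ have opposite signs and the ratio above is positive. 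As an independent consistency check, dividing (\ref{Kernel[2]odd}) by $x$, squaring, and integrating against $\omega_t$ produces $\|F_{2m-1}^{t,[2]}\|_{t,[2]}^2=\|F_{2m}^t\|_t^2+(\xi_{2m-1}^2-a_{2m}^2(t))^2\,\|F_{2m-1}^t(x)/x\|_t^2$, which already forces $\xi_{2m-1}^2\ne a_{2m}^2(t)$.
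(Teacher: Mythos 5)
Your proposal is correct, and for the connection formulas \eqref{Kernel[2]CFodd}--\eqref{Kernel[2]CFeven} and \eqref{Kernel[2]odd} it coincides with the paper's argument (expansion in the Freud basis, identification of $\xi_n^2$ as a ratio of norms, evaluation at or near the origin, substitution of the recurrence \eqref{3TRR-Monic}). You diverge from the paper in two places, both legitimately. First, you prove \eqref{Kernel[2]even} directly, by checking that $xF_{2m}^{t,[2]}$ is monic of degree $2m+1$ and $\langle\cdot,\cdot\rangle_t$-orthogonal to all lower powers (parity for $j$ even, the $[2]$-orthogonality for $j$ odd); the paper instead combines \eqref{Kernel[2]CFeven} with the recurrence and the observation that $a_{2m+1}^2(t)=-F_{2m+2}^t(0)/F_{2m}^t(0)=\xi_{2m}^2$. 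Your route is shorter and makes the cancellation look less accidental; the paper's route has the side benefit of exhibiting the identity $\xi_{2m}^2=a_{2m+1}^2(t)$ explicitly. Second, for the sign claim \eqref{coefPos} the paper invokes Chihara's symmetry decomposition $F_{2m}^t(x)=A_m(x^2)$, $F_{2m+1}^t(x)=xB_m(x^2)$ and the known signs $A_m(0)=(-1)^m|A_m(0)|$, $B_{m-1}(0)=(-1)^{m-1}|B_{m-1}(0)|$, whereas you deduce from $K_{2m}(0,0;t)>0$ and $K_{2m-2}(0,0;t)>0$ (via the confluent Christoffel--Darboux identity \eqref{[S2]-Bcn}) that $[F_{2m+1}^t]'(0)F_{2m}^t(0)>0$ and $[F_{2m-1}^t]'(0)F_{2m-2}^t(0)>0$, and then use \eqref{3TRR-Monic} at $x=0$ to flip the sign between $F_{2m}^t(0)$ and $F_{2m-2}^t(0)$. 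This is self-contained within the tools already set up in Section \ref{[SECTION-2]-KernelsP} and avoids importing the external fact about signs of kernel polynomials at the origin. One marginal quibble: your closing norm identity $\|F_{2m-1}^{t,[2]}\|_{t,[2]}^2=\|F_{2m}^t\|_t^2+(\xi_{2m-1}^2-a_{2m}^2(t))^2\|F_{2m-1}^t(x)/x\|_t^2$ is correct (the cross term vanishes by orthogonality), but it does not by itself force $\xi_{2m-1}^2\neq a_{2m}^2(t)$; that follows instead from $F_{2m}^t(0)\neq 0$ in \eqref{xi2odd-a2even}. Since you offer it only as a consistency check, this does not affect the proof.
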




\begin{proof}
We can expand $x^{2}F_{n}^{t,[2]}(x)$ in terms of the SMOP $%
\{F_{n}^{t}\}_{n\geqslant 0}$ as%
\begin{equation*}
x^{2}F_{n}^{t,[2]}(x)=\sum_{k=0}^{n+2}b_{n,k}F_{k}^{t}(x).
\end{equation*}%
For every $k<n,$ we have $b_{n,k}=0$ by orthogonality. Also we have $%
b_{n,n+2}=1$, since we deal with monic polynomials. The following coefficient%
\begin{equation*}
b_{n,n+1}=\frac{\langle x^{2}F_{n}^{t,[2]},F_{n+1}^{t}\rangle _{t}}{%
||F_{n+1}^{t}||_{t}^{2}}
\end{equation*}%
vanishes, because the numerator is an integral in the whole $\mathbb{R}$\ of
an odd function, so%
\begin{equation*}
\langle x^{2}F_{n}^{t,[2]},F_{n+1}^{t}\rangle _{t}=\int_{-\infty }^{\infty
}x^{2}F_{n}^{t,[2]}F_{n+1}^{t}e^{-x^{4}+2tx^{2}}dx=0.
\end{equation*}%
Next, we easily see that%
\begin{equation*}
b_{n,n}=\xi _{n}^{2}=\frac{\langle x^{2}F_{n}^{t,[2]},F_{n}^{t}\rangle _{t}}{%
||F_{n}^{t}||_{t}^{2}}=\frac{\langle F_{n}^{t,[2]},F_{n}^{t}\rangle _{t,[2]}%
}{||F_{n}^{t}||_{t}^{2}}=\frac{||F_{n}^{t,[2]}||_{t,[2]}^{2}}{%
||F_{n}^{t}||_{t}^{2}}>0.
\end{equation*}%
Thus, we have%
\begin{equation}
x^{2}F_{n}^{t,[2]}(x)=F_{n+2}^{t}(x)+\xi _{n}^{2}\,F_{n}^{t}(x),
\label{Kernel[2]CF}
\end{equation}
Shifting the index in the above formula for $n$ odd and $n$ even, we obtain
formulas (\ref{Kernel[2]CFodd}) and (\ref{Kernel[2]CFeven}) respectively.
Taking $x=0$ in (\ref{Kernel[2]CFeven}) yields%
\begin{equation*}
\xi _{2m}^{2}=\frac{||F_{2m}^{t,[2]}||_{t,[2]}^{2}}{||F_{2m}^{t}||_{t}^{2}}=%
\frac{-F_{2m+2}^{t}(0)}{F_{2m}^{t}(0)},
\end{equation*}%
which is well defined since $x=0$ is not a zero of any Freud polynomial of
even degree. In case of Freud polynomials of odd degree, the above
expression is not defined. Letting $x\rightarrow 0$ in (\ref{Kernel[2]CFodd}%
), we may apply L'Hôpital's rule obtaining 
\begin{equation*}
\xi _{2m-1}^{2}=\frac{||F_{2m-1}^{t,[2]}||_{t,[2]}^{2}}{%
||F_{2m-1}^{t}||_{t}^{2}}=\frac{-[F_{2m+1}^{t}]^{\prime }(0)}{%
[F_{2m-1}^{t}]^{\prime }(0)}.
\end{equation*}
From (\ref{3TRR-Monic}) 
\begin{equation}
F_{2m+2}^{t}(x)=xF_{2m+1}^{t}(x)-a_{2m+1}^{2}(t)F_{2m}^{t}(x),
\label{RRTT2n2}
\end{equation}%
and taking $x=0$ we have%
\begin{equation*}
a_{2m+1}^{2}(t)=\frac{-F_{2m+2}^{t}(0)}{F_{2m}^{t}(0)},
\end{equation*}%
which essentially equals to $\xi _{2m}^{2}$. Combining (\ref{Kernel[2]CFeven}%
) with (\ref{RRTT2n2}) we have%
\begin{eqnarray*}
x^{2}F_{2m}^{t,[2]}(x) &=&xF_{2m+1}^{t}(x)+\left[ \xi
_{2m}^{2}-a_{2m+1}^{2}(t)\right] \,F_{2m}^{t}(x) \\
&=&xF_{2m+1}^{t}(x).
\end{eqnarray*}%
Dividing by $x$ we obtain the connection formula (\ref{Kernel[2]even}).

Next, from (\ref{3TRR-Monic}) 
\begin{equation*}
F_{2m+1}^{t}(x)=xF_{2m}^{t}(x)-a_{2m}^{2}(t)F_{2m-1}^{t}(x),
\end{equation*}%
and substituting this expression into (\ref{Kernel[2]CFodd}) gives 
\begin{equation*}
x^{2}F_{2m-1}^{t,[2]}(x)=xF_{2m}^{t}(x)+\left[ \xi _{2m-1}^{2}-a_{2m}^{2}(t)%
\right] \,F_{2m-1}^{t}(x)
\end{equation*}%
which yields expression (\ref{Kernel[2]odd}). The $x$ derivative of (\ref%
{Kernel[2]odd}) evaluated at $x=0$, provides 
\begin{equation}
\lbrack \xi _{2m-1}^{2}-a_{2m}^{2}(t)]=\frac{-F_{2m}^{t}(0)}{%
[F_{2m-1}^{t}]^{\prime }(0)},  \label{xi2odd-a2even}
\end{equation}

It is very well known (see \cite[Ch. I, §8]{Chi78}) that, due to the
symmetry of the weight function, there exist two $n$-th degree monic
polynomials $A_{m}$ and $B_{m}$ such that 
\begin{eqnarray*}
F_{2m}^{t}(x) &=&A_{m}(x^{2}), \\
F_{2m+1}^{t}(x) &=&xB_{m}(x^{2}),
\end{eqnarray*}%
being $B_{m}(x)$ the kernel polynomials, with parameter $0$, of $A_{n}(x)$.
Thus, we have that $sign\, F_{2m}^{t}(0)=sign\,A_{m}(0)=(-1)^{m}$ and $%
sign\,[F_{2m-1}^{t}]^{\prime}(0)=sign\,B_{m-1}(0)=(-1)^{m-1}$, so therefore,
concerning the sign of (\ref{xi2odd-a2even}) we have 
\begin{equation*}
sign\frac{F_{2m}^{t}(0)}{[F_{2m-1}^{t}]^{\prime }(0)}=sign\frac{A_{m}(0)}{%
B_{m}(0)}=\frac{(-1)^{m}}{(-1)^{m-1}}=-1,
\end{equation*}%
which shows that%
\begin{equation*}
[ \xi _{2m-1}^{2}-a_{2m}^{2}(t)]>0.
\end{equation*}%
This completes the proof.
\end{proof}


\begin{lemma}
\label{3TRRKer[2]} The SMOP $\{F_{n}^{t,[2]}\}_{n\geqslant 0}$ satisfies the
following three term recurrence relation 
\begin{equation*}
xF_{n}^{t,[2]}(x)=F_{n+1}^{t,[2]}(x)+\alpha _{n}^{2}F_{n-1}^{t,[2]}(x),
\end{equation*}
where%
\begin{equation*}
\alpha _{n}^{2}=\frac{||F_{n}^{t,[2]}||_{t,[2]}^{2}}{%
||F_{n-1}^{t,[2]}||_{t,[2]}^{2}}=\frac{\xi _{n}^{2}}{\xi _{n-1}^{2}}%
a_{n}^{2}(t).
\end{equation*}%
$a_{n}^{2}(t)$ and $\xi _{n}^{2}$ are given in (\ref{3TRR-Monic}) and Lemma %
\ref{CFKer[2]}, respectively.
\end{lemma}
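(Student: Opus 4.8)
The plan is to exploit the fact that $\{F_n^{t,[2]}\}_{n\geq0}$ is itself a sequence of monic orthogonal polynomials with respect to the positive definite inner product $\langle\cdot,\cdot\rangle_{t,[2]}$ defined in (\ref{[S1]-InnProd-2}) with $k=2$. Since the modified weight $x^2e^{-x^4+2tx^2}$ is still even, the operator of multiplication by $x$ is symmetric for $\langle\cdot,\cdot\rangle_{t,[2]}$, and Favard's theorem guarantees a three term recurrence of the symmetric form $xF_n^{t,[2]}(x)=F_{n+1}^{t,[2]}(x)+\alpha_n^2\,F_{n-1}^{t,[2]}(x)$ with no middle term. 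The standard computation of the recurrence coefficient for a monic orthogonal family then gives immediately
\begin{equation*}
\alpha_n^2=\frac{\langle xF_n^{t,[2]},F_{n-1}^{t,[2]}\rangle_{t,[2]}}{||F_{n-1}^{t,[2]}||_{t,[2]}^2}=\frac{\langle F_n^{t,[2]},xF_{n-1}^{t,[2]}\rangle_{t,[2]}}{||F_{n-1}^{t,[2]}||_{t,[2]}^2}=\frac{||F_n^{t,[2]}||_{t,[2]}^2}{||F_{n-1}^{t,[2]}||_{t,[2]}^2},
\end{equation*}
using symmetry of multiplication by $x$ and the fact that $xF_{n-1}^{t,[2]}$ is monic of degree $n$, so only its $F_n^{t,[2]}$ component survives. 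This establishes the first equality for $\alpha_n^2$.

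The second equality is where the content lies: I must relate the norms $||F_n^{t,[2]}||_{t,[2]}^2$ to the quantities $\xi_n^2$ and $a_n^2(t)$. By the definition of $\xi_n^2$ recorded in Lemma \ref{CFKer[2]} we have $\xi_n^2=||F_n^{t,[2]}||_{t,[2]}^2/||F_n^t||_t^2$, hence $||F_n^{t,[2]}||_{t,[2]}^2=\xi_n^2\,||F_n^t||_t^2$. Substituting this for both $n$ and $n-1$ yields
\begin{equation*}
\alpha_n^2=\frac{\xi_n^2\,||F_n^t||_t^2}{\xi_{n-1}^2\,||F_{n-1}^t||_t^2}=\frac{\xi_n^2}{\xi_{n-1}^2}\cdot\frac{||F_n^t||_t^2}{||F_{n-1}^t||_t^2}=\frac{\xi_n^2}{\xi_{n-1}^2}\,a_n^2(t),
\end{equation*}
where the last step uses the well-known identity $a_n^2(t)=||F_n^t||_t^2/||F_{n-1}^t||_t^2$ for monic orthogonal polynomials, consistent with item 2 of Proposition \ref{FbMOPS} (namely $\zeta_n(t)=a_1^2(t)\cdots a_n^2(t)$ with $||F_n^t||_t^2=\zeta_n(t)||1||_t^2$). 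This completes the chain of equalities.

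I do not expect any genuine obstacle here; the lemma is essentially bookkeeping built on Favard's theorem and the norm formulas already assembled in Lemma \ref{CFKer[2]}. The only point requiring a word of care is the justification that the recurrence has the symmetric (no middle term) form, which follows from the evenness of the perturbed weight $x^2\omega_t(x)$ — the middle coefficient $\langle xF_n^{t,[2]},F_n^{t,[2]}\rangle_{t,[2]}$ is an integral over $\mathbb{R}$ of an odd function and hence vanishes — exactly as was used repeatedly in the proof of Lemma \ref{CFKer[2]}. One should also note that $\{F_n^{t,[2]}\}$ exists and $\langle\cdot,\cdot\rangle_{t,[2]}$ is positive definite, which was already observed in Section \ref{[SECTION-2]-KernelsP} for even $k$, so all norms appearing in denominators are strictly positive and the division is legitimate.
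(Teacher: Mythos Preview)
Your proposal is correct and follows essentially the same approach as the paper: both establish the symmetric three term recurrence from orthogonality and parity of the weight $x^{2}\omega_{t}(x)$, identify $\alpha_{n}^{2}$ as the norm ratio $||F_{n}^{t,[2]}||_{t,[2]}^{2}/||F_{n-1}^{t,[2]}||_{t,[2]}^{2}$, and then rewrite this as $(\xi_{n}^{2}/\xi_{n-1}^{2})\,a_{n}^{2}(t)$. The only cosmetic difference is that the paper passes through $\langle x^{2}F_{n}^{t,[2]},F_{n}^{t}\rangle_{t}$ and invokes the connection formula (\ref{Kernel[2]CF}) to recover $\xi_{n}^{2}||F_{n}^{t}||_{t}^{2}$, whereas you invoke directly the identity $\xi_{n}^{2}=||F_{n}^{t,[2]}||_{t,[2]}^{2}/||F_{n}^{t}||_{t}^{2}$ already recorded in Lemma~\ref{CFKer[2]}; these are the same computation.
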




\begin{proof}
We can expand $xF_{n}^{t,[2]}(x)$ in terms of $\{F_{n}^{t,[2]}\}_{n\geqslant
0}$ and, for orthogonality reasons, the only terms remaining are $%
F_{n+1}^{t,[2]}(x)$ and $F_{n-1}^{t,[2]}(x)$. The coefficient of $%
F_{n+1}^{t,[2]}(x)$ is $1$\ because we deal with monic polynomials and the
other one is%
\begin{equation*}
\alpha _{n}^{2}=\frac{||F_{n}^{t,[2]}||_{t,[2]}^{2}}{%
||F_{n-1}^{t,[2]}||_{t,[2]}^{2}}>0.
\end{equation*}%
Let notice that%
\begin{equation*}
\alpha _{n}^{2}=\frac{\langle F_{n}^{t,[2]},F_{n}^{t}\rangle _{t,[2]}}{%
\langle F_{n-1}^{t,[2]},F_{n-1}^{t}\rangle _{t,[2]}}=\frac{\langle
x^{2}F_{n}^{t,[2]},F_{n}^{t}\rangle _{t}}{\langle
x^{2}F_{n-1}^{t,[2]},F_{n-1}^{t}\rangle _{t}}.
\end{equation*}%
Applying (\ref{Kernel[2]CF}) we get%
\begin{eqnarray*}
\alpha _{n}^{2} &=&\frac{\langle F_{n+2}^{t},F_{n}^{t}\rangle _{t}+\xi
_{n}^{2}\langle F_{n}^{t},F_{n}^{t}\rangle _{t}}{\langle
F_{n+1}^{t},F_{n-1}^{t}\rangle _{t}+\xi _{n-1}^{2}\langle
F_{n-1}^{t},F_{n-1}^{t}\rangle _{t}} \\
&=&\frac{\xi _{n}^{2}}{\xi _{n-1}^{2}}\frac{||F_{n}^{t}||_{t}^{2}}{%
||F_{n-1}^{t}||_{t}^{2}} \\
&=&\frac{\xi _{n}^{2}}{\xi _{n-1}^{2}}a_{n}^{2}(t).
\end{eqnarray*}%
This completes the proof.
\end{proof}


Let $x_{n,k}=x_{n,k}(t)$, $x_{n,k}^{[2]}=x_{n,k}^{[2]}(t)$, $k=1,\ldots ,n$,
be the zeros of $F_{n}^{t}(x)$, $F_{n}^{t,[2]}(x),$ respectively, arranged
in an increasing order. All of them are real and simple. By parity reasons,
these zeros are symmetrically arranged with respect to the origin. That is, $%
-x_{n,1}^{[2]}=x_{n,n}^{[2]}$, $-x_{n,2}^{[2]}=x_{n,n-1}^{[2]}$ and so on.

We next prove that the zeros of $G_{2m}(x)=xF_{2m-1}^{t,[2]}(x)$ and $%
F_{2m}^{t}(x)$ interlace. Concerning the zeros $g_{2m,k}=g_{2m,k}(t)$, $%
k=1,\ldots ,2m,$\ of $G_{2m}(x)$ are the same zeros of $F_{2m-1}^{t,[2]}(x)$
except one more zero at the origin, so $G_{2m}(x)$ has a double zero at $x=0$%
. Thus, we have $g_{2m,l}=x_{2m-1,l}^{[2]}$, $l=1,\ldots m$, with $%
x_{2m-1,m}^{[2]}=g_{2m,m}=g_{2m,m+1}=0$ and $g_{2m,r}=x_{2m-1,r-1}^{[2]}$, $%
r=m+1,\ldots ,2m$.

Both $G_{2m}(x)$ and $F_{2m}^{t}(x)$ are even polynomial functions, so their
respective graphs are symmetric with respect to the point $x=0$, which means
that we only need to prove interlacing in the positive real semi-axis, being
the situation in $\mathbb{R}_{-}$ the reflection with respect to the $y$%
-axis. Hence, we only need to prove that, for $x>0$, between two consecutive
zeros $(x_{2m,k}\,,x_{2m,k+1})$ there are only one zero of\ $%
F_{2m-1}^{t,[2]}(x)$. Consider (\ref{Kernel[2]CFodd}) evaluated at the
positive zeros of $F_{2m}^{t}(x)$ (i.e., $x_{2m,r}>0$, with $r=m+1,\ldots
,2m $). We have 
\begin{equation*}
x_{2m,r}\,F_{2m-1}^{t,[2]}(x_{2m,r})=\left[ \xi _{2m-1}^{2}-a_{2m}^{2}(t)%
\right] \,F_{2m-1}^{t}(x_{2m,r}),
\end{equation*}
and taking into account (\ref{coefPos}), we get 
\begin{equation}
sign[F_{2m-1}^{t,[2]}(x_{2m,r})]=sign[F_{2m-1}^{t}(x_{2m,r})],\ r=m+1,\ldots
,2m.  \label{Z2}
\end{equation}
Thus, from (\ref{Z2}), the relation between the zeros of $G_{2m}(x)$ and $%
F_{2m-1}^{t,[2]}(x)$, the symmetric reflection with respect to the $y$-axis,
and the well known fact that the zeros of $F_{2m-1}^{t}(x)$ interlace with
the zeros of $F_{2m}^{t}(x)$, we obtain the following interlacing property



\begin{theorem}
\label{T1} The inequalities%
\begin{equation*}
x_{2m,1}<g_{2m,1}<x_{2m,2}<\cdots
<x_{2m,m}<g_{2m,m}=0=g_{2m,m+1}<\cdots<x_{2m,2m-1}<g_{2m,2m}<x_{2m,2m}\,,
\end{equation*}%
hold for every $m\in \mathbb{N}$.
\end{theorem}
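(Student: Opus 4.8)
The plan is to obtain the asserted chain of inequalities by feeding the sign information in (\ref{Z2}) into a counting argument, using the classical interlacing of $F_{2m-1}^{t}$ with $F_{2m}^{t}$ together with the symmetry of the weight.

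First I would sort out the zeros. Since $F_{2m}^{t}$ is monic, even, of degree $2m$, with simple real zeros and $F_{2m}^{t}(0)\neq 0$, it has exactly $m$ positive zeros $0<x_{2m,m+1}<\cdots <x_{2m,2m}$ and, by symmetry, $m$ negative ones; in particular $x_{2m,m}<0<x_{2m,m+1}$. Analogously $F_{2m-1}^{t,[2]}$ is odd of degree $2m-1$, so the origin is one of its (simple) zeros and it has exactly $m-1$ positive zeros. Hence $G_{2m}(x)=xF_{2m-1}^{t,[2]}(x)$ has a double zero at $0$ (this is the equality $g_{2m,m}=g_{2m,m+1}=0$), while its remaining zeros are exactly those of $F_{2m-1}^{t,[2]}$, matched to the $g_{2m,r}$ through $g_{2m,r}=x_{2m-1,r-1}^{[2]}$ for $r=m+1,\dots ,2m$ as recorded before the statement.

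Next I would pin down the sign pattern. Because $F_{2m-1}^{t}$ is monic of odd degree it is positive for large $x$, and since its zeros interlace those of $F_{2m}^{t}$, its largest zero lies in $(x_{2m,2m-1},x_{2m,2m})$ and $F_{2m-1}^{t}$ changes sign exactly once across each positive zero of $F_{2m}^{t}$; therefore $\operatorname{sign}F_{2m-1}^{t}(x_{2m,m+r})=(-1)^{m-r}$ for $r=1,\dots ,m$. Plugging this into (\ref{Z2}) yields $\operatorname{sign}F_{2m-1}^{t,[2]}(x_{2m,m+r})=(-1)^{m-r}$, so $F_{2m-1}^{t,[2]}$ changes sign between any two consecutive positive zeros of $F_{2m}^{t}$. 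By the intermediate value theorem it then has at least one zero in each of the $m-1$ intervals $(x_{2m,m+r},x_{2m,m+r+1})$, $r=1,\dots ,m-1$; since it has exactly $m-1$ positive zeros, each of these intervals contains exactly one of them, and none lies in $(0,x_{2m,m+1})$ or in $(x_{2m,2m},+\infty)$.

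Finally I would translate this back through $G_{2m}$: the above says $x_{2m,r-1}<g_{2m,r}<x_{2m,r}$ for $r=m+2,\dots ,2m$, which, together with $g_{2m,m}=g_{2m,m+1}=0$ and $x_{2m,m}<0<x_{2m,m+1}$, gives the positive half of the chain. Since $G_{2m}$ and $F_{2m}^{t}$ are both even, reflecting across the $y$-axis (i.e. using $x_{2m,k}=-x_{2m,2m+1-k}$ and $g_{2m,k}=-g_{2m,2m+1-k}$) produces the mirror inequalities $x_{2m,k}<g_{2m,k}<x_{2m,k+1}$ for $k=1,\dots ,m-1$, and concatenating the two chains is precisely the statement. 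I expect no analytic difficulty here; the one place that needs care is the combinatorial matching — transferring ``one zero per gap'' through the index shift $g_{2m,r}=x_{2m-1,r-1}^{[2]}$ and the doubled origin, and checking that the count $m-1=\sum_r N_r$ of positive zeros, with each $N_r\geq 1$ in the $m-1$ gaps, really forces $N_r=1$ and leaves no stray zero in $(x_{2m,2m},+\infty)$ or near the origin. That bookkeeping is the main thing to get right.
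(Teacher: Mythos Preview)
Your argument is correct and follows essentially the same route as the paper: the paper's proof (given in the paragraph immediately preceding the theorem) likewise combines the sign relation (\ref{Z2}), the classical interlacing of $F_{2m-1}^{t}$ with $F_{2m}^{t}$, and the even symmetry of $G_{2m}$ and $F_{2m}^{t}$, only stated much more tersely. Your explicit sign computation $\operatorname{sign}F_{2m-1}^{t}(x_{2m,m+r})=(-1)^{m-r}$ and the pigeonhole count of the $m-1$ positive zeros against the $m-1$ gaps simply make precise what the paper leaves implicit.
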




\section{Connection formulas}

\label{[SECTION-3]-ConnForm}



We can expand $Q_{n}^{t}(x)$\ in terms of the SMOP $\{F_{n}^{t}\}_{n%
\geqslant 0}$, which is an orthogonal basis of $\mathbb{P}$, as follows%
\begin{equation*}
Q_{n}^{t}(x)=F_{n}^{t}\left( x\right) +\sum_{i=0}^{n-1}\lambda
_{n,i}F_{i}^{t}(x),  \label{expan-11}
\end{equation*}%
where%
\begin{equation*}
\lambda _{n,i}=\frac{\left\langle F_{i}^{t}\left( x\right)
,Q_{n}^{t}(x)\right\rangle _{t}}{||F_{i}^{t}||_{t}^{2}},\quad 0\leqslant
i\leqslant n-1.
\end{equation*}
From (\ref{Freud-typeInnProd}) and (\ref{Kernel-n}) the above equality
becomes%
\begin{equation}
Q_{n}^{t}(x)=F_{n}^{t}(x)-MQ_{n}^{t}(0)K_{n-1}(x,0;t),  \label{CF-1}
\end{equation}%
and evaluating the above expression at $x=0$, we deduce%
\begin{equation}
Q_{n}^{t}(0)=\frac{F_{n}^{t}(0)}{1+MK_{n-1}(0,0;t)}.  \label{CF-Q0}
\end{equation}%
Hence, from (\ref{3TRR-Monic}) and (\ref{CristDar}) , (\ref{CF-1}) reads as%
\begin{equation*}
xQ_{n}^{t}(x)=F_{n+1}^{t}(x)+A_n^t F_n^t(x)+B_{n}^{t}F_{n-1}^{t}(x),
\end{equation*}%
where%
\begin{eqnarray*}
A_{n}^{t} &=&-\frac{MF_{n}^{t}(0)F_{n-1}^{t}(0)}{\left\Vert
F_{n-1}^{t}\right\Vert _{t}^{2}\big(1+MK_{n-1}(0,0;t)\big)}, \\
B_{n}^{t} &=&a_{n}^{2}(t)+\frac{M\left( F_{n}^{t}(0)\right) ^{2}}{\left\Vert
F_{n-1}^{t}\right\Vert _{t}^{2}\big(1+MK_{n-1}(0,0;t)\big)}%
=a_{n}^{2}(t)\left( \frac{1+MK_{n}(0,0;t)}{1+MK_{n-1}(0,0;t)}\right) .
\end{eqnarray*}

\begin{remark}
Due to the fact that $\omega _{t}(x)$ is an even weight function, $x=0$ is
always a zero of $F_{n}^{t}(x)$\ for $n$ odd. Then, $%
F_{n}^{t}(0)F_{n-1}^{t}(0)=0$ for every $n\geqslant 1$, and therefore $%
A_{n}^{t}=0 $ for all positive integer $n$.
\end{remark}

From the above remark, we have%
\begin{align}
xQ_{n}^{t}(x)& =F_{n+1}^{t}(x)+B_{n}^{t}F_{n-1}^{t}(x),  \label{ConnForm_1}
\\
Q_{2n+1}^{t}(x)& =F_{2n+1}^{t}(x).  \notag
\end{align}%
Introducing the notation%
\begin{equation}
b_{n}^{t}=\dfrac{1+MK_{n}(0,0;t)}{1+MK_{n-1}(0,0;t)}  \label{ratios-1}
\end{equation}%
we get%
\begin{equation*}
B_{n}^{t}=a_{n}^{2}(t)\,b_{n}^{t}.
\end{equation*}

This yields an expression for the ratio of the energy of polynomials $%
Q_{n}^{t}(x)$ and $F_{n}^{t}(x)$ with respect to the norms associated with
their corresponding inner products.

\begin{proposition}
\label{FEnergy}Let $\left\Vert \cdot \right\Vert ^{2}$ be the squared norm
of Freud-type monic polynomials with respect to (\ref{Freud-typeInnProd}).
Then%
\begin{equation*}
\frac{||Q_{n}^{t}||^{2}}{||F_{n}^{t}||_{t}^{2}}=\frac{1+MK_{n}\left(
0,0;t\right) }{1+MK_{n-1}\left( 0,0;t\right) }=b_{n}^{t},\quad n\geqslant 1.
\end{equation*}%
Moreover, $b_{n}^{t}=1$ when $n$ is odd and $b_{n}^{t}>1$ when $n$ is even,
i.e., for every $m\geqslant 0$, 
\begin{equation*}
||Q_{2m+1}^{t}||^{2}=||F_{2m+1}^{t}||_{t}^{2}.
\end{equation*}
\end{proposition}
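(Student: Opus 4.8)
The plan is to compute the norm $\|Q_n^t\|^2$ directly from the inner product (\ref{Freud-typeInnProd}) and the connection formula (\ref{CF-1}), and then to reconcile the two expressions for $B_n^t$ that were already obtained above. First I would write
$\|Q_n^t\|^2=\langle Q_n^t,Q_n^t\rangle=\langle Q_n^t,Q_n^t\rangle_t+M(Q_n^t(0))^2$, and then use orthogonality of $Q_n^t$ against all lower-degree polynomials together with $Q_n^t=F_n^t+(\text{lower degree})$ to get $\langle Q_n^t,Q_n^t\rangle=\langle Q_n^t,F_n^t\rangle$. Expanding $F_n^t$ via (\ref{CF-1}), namely $F_n^t(x)=Q_n^t(x)+MQ_n^t(0)K_{n-1}(x,0;t)$ is not quite what we want; instead I would use (\ref{CF-1}) the other way, $Q_n^t(x)=F_n^t(x)-MQ_n^t(0)K_{n-1}(x,0;t)$, and pair it with $F_n^t$ in $\langle\cdot,\cdot\rangle_t$. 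Since $\deg K_{n-1}(\cdot,0;t)\le n-1$, orthogonality of $F_n^t$ in $\langle\cdot,\cdot\rangle_t$ kills that term, giving $\langle Q_n^t,F_n^t\rangle_t=\|F_n^t\|_t^2$. Hence $\|Q_n^t\|^2=\|F_n^t\|_t^2+M(Q_n^t(0))^2$.

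Next I would substitute the value of $Q_n^t(0)$ from (\ref{CF-Q0}), obtaining
$\|Q_n^t\|^2=\|F_n^t\|_t^2+\dfrac{M(F_n^t(0))^2}{(1+MK_{n-1}(0,0;t))^2}$.
To finish, divide by $\|F_n^t\|_t^2$ and recognize the second summand: from the confluent Christoffel--Darboux formula, $K_n(0,0;t)-K_{n-1}(0,0;t)=(F_n^t(0))^2/\|F_n^t\|_t^2$, so
$\dfrac{\|Q_n^t\|^2}{\|F_n^t\|_t^2}=1+\dfrac{M\big(K_n(0,0;t)-K_{n-1}(0,0;t)\big)}{1+MK_{n-1}(0,0;t)}=\dfrac{1+MK_n(0,0;t)}{1+MK_{n-1}(0,0;t)}=b_n^t$,
as claimed. (Alternatively, one can reach the same identity by taking norms on both sides of the three-term-like relation $xQ_n^t=F_{n+1}^t+B_n^tF_{n-1}^t$ and using $B_n^t=a_n^2(t)b_n^t$ together with $a_n^2(t)=\|F_n^t\|_t^2/\|F_{n-1}^t\|_t^2$; I would mention this as a cross-check but carry out the direct computation as the main argument.)

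For the parity statement, I would invoke the Remark preceding the proposition: when $n=2m+1$ is odd, $\omega_t$ even forces $F_{2m+1}^t(0)=0$, so the extra term vanishes and $\|Q_{2m+1}^t\|^2=\|F_{2m+1}^t\|_t^2$, i.e. $b_{2m+1}^t=1$; equivalently $K_{2m+1}(0,0;t)=K_{2m}(0,0;t)$ by (\ref{[S2]-Bcn}). When $n=2m$ is even, $F_{2m}^t(0)\neq 0$ (the origin is not a zero of an even-degree Freud polynomial, as used in Lemma~\ref{CFKer[2]}), and since $M>0$, $K_{n-1}(0,0;t)>0$, the added term is strictly positive, whence $b_{2m}^t>1$.

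There is no serious obstacle here; the only point requiring a little care is the bookkeeping in the orthogonality step — making sure one pairs (\ref{CF-1}) with $F_n^t$ in the \emph{unperturbed} inner product $\langle\cdot,\cdot\rangle_t$ (where $K_{n-1}(\cdot,0;t)$ is annihilated) rather than in $\langle\cdot,\cdot\rangle$, and then adding the mass term by hand using (\ref{CF-Q0}). Everything else is substitution and the confluent Christoffel--Darboux identity, both already available in Section~\ref{[SECTION-2]-KernelsP}.
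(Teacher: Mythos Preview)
Your overall strategy matches the paper's, but there is a bookkeeping slip that makes the displayed computation incorrect as written. You correctly observe that $\|Q_n^t\|^2=\langle Q_n^t,F_n^t\rangle$ and that $\langle Q_n^t,F_n^t\rangle_t=\|F_n^t\|_t^2$. However, the mass term in $\langle Q_n^t,F_n^t\rangle$ is $MQ_n^t(0)\,F_n^t(0)$, \emph{not} $M(Q_n^t(0))^2$; the latter is the mass term in $\langle Q_n^t,Q_n^t\rangle$, and $\langle Q_n^t,Q_n^t\rangle_t\neq\|F_n^t\|_t^2$ in general. The correct line is therefore
\[
\|Q_n^t\|^2=\|F_n^t\|_t^2+MQ_n^t(0)\,F_n^t(0)
=\|F_n^t\|_t^2+\frac{M\big(F_n^t(0)\big)^2}{1+MK_{n-1}(0,0;t)},
\]
with the denominator to the \emph{first} power, not squared. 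Your subsequent display, with $1+MK_{n-1}(0,0;t)$ unsquared in the denominator, is then exactly right and the remainder of the argument (the confluent Christoffel--Darboux step and the parity discussion) goes through unchanged---but as written that display does not follow from your squared-denominator formula. This is precisely the ``little care'' you flagged in your last paragraph: once you have passed to $\langle Q_n^t,F_n^t\rangle$, the point-mass contribution must carry $F_n^t(0)$ in one slot. With this correction your proof coincides with the paper's.
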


\begin{proof}
Taking in account%
\begin{eqnarray*}
\left\Vert Q_{n}^{t}\right\Vert ^{2} &=&\left\langle
Q_{n}^{t}(x),x^{n}\right\rangle \\
&=&\left\langle Q_{n}^{t}(x),F_{n}^{t}\left( x\right) \right\rangle \\
&=&\left\langle Q_{n}^{t}(x),F_{n}^{t}\left( x\right) \right\rangle
_{t}+MQ_{n}^{t}(0)F_{n}^{t}\left( 0\right)
\end{eqnarray*}%
and using (\ref{CF-Q0}), we get%
\begin{eqnarray*}  \label{normQ}
||Q_{n}^{t}||^{2} &=&||F_{n}^{t}||_{t}^{2}+\dfrac{M\bigl(F_{n}^{t}\left(
0\right) \bigr)^{2}}{1+MK_{n-1}\left( 0,0;t\right) }  \notag \\
&=&||F_{n}^{t}||_{t}^{2}\,\frac{\bigl(1+MK_{n-1}\left( 0,0;t\right) \bigr)+%
\frac{M\left( F_{n}^{t}\left( 0\right) \right) ^{2}}{\left\Vert
F_{n}^{t}\right\Vert _{t}^{2}}}{1+MK_{n-1}\left( 0,0;t\right) }  \notag \\
&=&||F_{n}^{t}||_{t}^{2}\,\frac{1+MK_{n}\left( 0,0;t\right) }{%
1+MK_{n-1}\left( 0,0;t\right) }.
\end{eqnarray*}

Evaluating (\ref{KoddZERO}) at $x=0$ we have $b_{n}^{t}=1$ for $n$ odd and $%
b_{n}^{t}>1$ for $n$ even. This gives the result when combined with the
above equation.
\end{proof}



As a consequence, we get



\begin{theorem}
\label{xQ=F} Let $\left\{ Q_{n}^{t}\right\} _{n\geqslant 0}$ be the sequence
of monic Freud-type polynomials orthogonal with respect to (\ref%
{Freud-typeInnProd}). Then%
\begin{equation*}
Q_{2m+1}^{t}(x)=F_{2m+1}^{t}(x),\quad m\geqslant 0,
\end{equation*}%
\begin{equation*}
Q_{2m}^{t}(x)=F_{2m}^{t}(x)-\frac{MF_{2m}^{t}(0)}{1+MK_{2m-1}(0,0;t)}%
K_{2m-1}(x,0;t),\quad m\geqslant 1.
\end{equation*}
\end{theorem}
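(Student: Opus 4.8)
The plan is to obtain both identities as immediate specializations of the connection formula (\ref{CF-1}) together with its consequence (\ref{CF-Q0}), the only additional input being the parity of the Freud weight recorded in (\ref{F(0)Nulos}).

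First I would treat the odd-degree case. Since $\omega_t$ is even, $x=0$ is a zero of $F_{2m+1}^{t}$, so $F_{2m+1}^{t}(0)=0$ by (\ref{F(0)Nulos}); substituting $n=2m+1$ into (\ref{CF-Q0}) then forces $Q_{2m+1}^{t}(0)=0$. Feeding this back into (\ref{CF-1}) makes the kernel term vanish and leaves $Q_{2m+1}^{t}(x)=F_{2m+1}^{t}(x)$. This is precisely the second line of (\ref{ConnForm_1}), so at this point one could simply cite it.

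Second I would treat the even-degree case by substituting (\ref{CF-Q0}) with $n=2m$ directly into (\ref{CF-1}):
\[
Q_{2m}^{t}(x)=F_{2m}^{t}(x)-M\,\frac{F_{2m}^{t}(0)}{1+MK_{2m-1}(0,0;t)}\,K_{2m-1}(x,0;t),
\]
which is exactly the asserted formula. I would note that this is well defined: $M\in\mathbb{R}_{+}$ and $K_{2m-1}(0,0;t)>0$, it being a sum of strictly positive terms as observed right after the Christoffel--Darboux formula (\ref{CristDar}), so the denominator never vanishes.

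There is essentially no analytic obstacle here; the statement is a reorganization of relations already established, and the only thing to be careful about is the bookkeeping of the kernel index $n-1$ and invoking the parity relations (\ref{F(0)Nulos}) at the right step. If desired, one may append the remark that, by (\ref{KoddZERO}), $K_{2m-1}(x,0;t)=K_{2m-2}(x,0;t)$, so the even-degree formula admits an equivalent form with the index $2m-2$ in place of $2m-1$.
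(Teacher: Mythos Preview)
Your proof is correct and matches the paper's approach: the paper presents this theorem explicitly as an immediate consequence of (\ref{CF-1}), (\ref{CF-Q0}) and the parity facts in (\ref{F(0)Nulos}), without giving a separate argument. Your write-up simply makes explicit the two specializations (odd and even degree) that the paper leaves to the reader, and, as you note, the odd case is already recorded in (\ref{ConnForm_1}).
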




Next, we provide an alternative way to represent the Freud-type polynomials
of even degree $Q_{2m}^{t}(x)$ in terms of the polynomials $F_{2m}^{t}(x)$
and the $2$-iterated monic Freud kernel polynomials $F_{n}^{t,[2]}(x)$. This
representation will allow us to obtain the results about monotonicity and
asymptotic behavior (presented below in this work) for the zeros of $%
Q_{2m}^{t}(x)$ in terms of the parameter $M$ present in (\ref%
{Freud-typeInnProd}). We only need to consider Freud type polynomials of
even degree, because they are the only ones affected by variations of $M$.



\begin{theorem}[Connection formula]
\label{[S1]-THEO-1}The sequence $\{{\tilde{Q}_{2m}^{t}\}}_{m\geqslant 0}$\
can be represented as 
\begin{equation}
\tilde{Q}_{2m}^{t}(x)=F_{2m}^{t}(x)+MK_{2m-1}(0,0;t)G_{2m}(x),\quad
m\geqslant 1,  \label{[S2]-ConnForm-Main}
\end{equation}%
with $G_{2m}(x)=xF_{2m-1}^{t,[2]}(x)$, ${\tilde{Q}_{2m}^{t}(x)=\kappa }_{2m}{%
Q_{n}^{t}(x)}$ and ${\kappa}_{2m}=1+MK_{2m-1}(0,0;t)>0$.
\end{theorem}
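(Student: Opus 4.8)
The plan is to start from the already-established connection formula \eqref{CF-1}, namely $Q_{2m}^{t}(x)=F_{2m}^{t}(x)-MQ_{2m}^{t}(0)K_{2m-1}(x,0;t)$, and multiply through by the normalizing constant $\kappa_{2m}=1+MK_{2m-1}(0,0;t)>0$ (positive because $K_{n}(0,0;t)>0$, as noted after the confluent Christoffel--Darboux formula). Using \eqref{CF-Q0} in the even case, $Q_{2m}^{t}(0)=F_{2m}^{t}(0)/(1+MK_{2m-1}(0,0;t))$, the product $\kappa_{2m}Q_{2m}^{t}(0)$ collapses to $F_{2m}^{t}(0)$, so that
\begin{equation*}
\tilde{Q}_{2m}^{t}(x)=\kappa_{2m}Q_{2m}^{t}(x)=\kappa_{2m}F_{2m}^{t}(x)-MF_{2m}^{t}(0)K_{2m-1}(x,0;t).
\end{equation*}
The remaining task is to massage the right-hand side into $F_{2m}^{t}(x)+MK_{2m-1}(0,0;t)G_{2m}(x)$ with $G_{2m}(x)=xF_{2m-1}^{t,[2]}(x)$.

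Next I would split $\kappa_{2m}F_{2m}^{t}(x)=F_{2m}^{t}(x)+MK_{2m-1}(0,0;t)F_{2m}^{t}(x)$, so the claim reduces to showing
\begin{equation*}
MK_{2m-1}(0,0;t)\,F_{2m}^{t}(x)-MF_{2m}^{t}(0)K_{2m-1}(x,0;t)=MK_{2m-1}(0,0;t)\,xF_{2m-1}^{t,[2]}(x),
\end{equation*}
i.e., after cancelling $M$, that
\begin{equation*}
K_{2m-1}(0,0;t)\big(F_{2m}^{t}(x)-xF_{2m-1}^{t,[2]}(x)\big)=F_{2m}^{t}(0)K_{2m-1}(x,0;t).
\end{equation*}
Here I invoke the kernel identities \eqref{KoddZERO}: since $K_{2m-1}(x,0;t)=K_{2m-2}(x,0;t)$ and, from the Christoffel--Darboux form \eqref{CristDar} together with the parity relations \eqref{F(0)Nulos}, $K_{2m-2}(x,0;t)=\dfrac{1}{\|F_{2m-2}^{t}\|_{t}^{2}}\dfrac{F_{2m-1}^{t}(x)F_{2m-2}^{t}(0)}{x}$; more usefully one rewrites $K_{2m-1}(x,0;t)$ directly in terms of the kernel polynomial structure. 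The cleanest route is to use the connection formula \eqref{Kernel[2]odd} from Lemma~\ref{CFKer[2]}, $x^{2}F_{2m-1}^{t,[2]}(x)=xF_{2m}^{t}(x)+[\xi_{2m-1}^{2}-a_{2m}^{2}(t)]F_{2m-1}^{t}(x)$, which upon dividing by $x$ gives
\begin{equation*}
F_{2m}^{t}(x)-xF_{2m-1}^{t,[2]}(x)=-\,[\xi_{2m-1}^{2}-a_{2m}^{2}(t)]\,\frac{F_{2m-1}^{t}(x)}{x}.
\end{equation*}
So the identity to verify becomes $-K_{2m-1}(0,0;t)[\xi_{2m-1}^{2}-a_{2m}^{2}(t)]\dfrac{F_{2m-1}^{t}(x)}{x}=F_{2m}^{t}(0)K_{2m-1}(x,0;t)$, and since both sides are now explicit multiples of $F_{2m-1}^{t}(x)/x$ (using $K_{2m-1}(x,0;t)=K_{2m-2}(x,0;t)$ expressed via \eqref{CristDar}), the statement collapses to a scalar identity among the constants $K_{2m-1}(0,0;t)$, $\xi_{2m-1}^{2}-a_{2m}^{2}(t)$, $F_{2m}^{t}(0)$, $F_{2m-2}^{t}(0)$, and $\|F_{2m-2}^{t}\|_{t}^{2}$.

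The main obstacle, therefore, is this final bookkeeping of constants: I expect to need \eqref{xi2odd-a2even}, which gives $\xi_{2m-1}^{2}-a_{2m}^{2}(t)=-F_{2m}^{t}(0)/[F_{2m-1}^{t}]^{\prime}(0)$, together with the evaluation \eqref{[S2]-Bcn} expressing $K_{2m-2}(0,0;t)=K_{2m-1}(0,0;t)$ in terms of $[F_{2m-1}^{t}]^{\prime}(0)$ and $F_{2m-2}^{t}(0)$ (and the analogous confluent formula at a shifted index), plus the relation $\|F_{2m-1}^{t}\|_t^2 = a_{2m-1}^2(t)\cdots$ linking consecutive norms via the three-term recurrence coefficients. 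Substituting these should make both sides of the scalar identity coincide. An alternative, perhaps slicker, derivation avoids Lemma~\ref{CFKer[2]} entirely: expand $F_{2m}^{t}(x)$ against the kernel $K_{2m-1}(x,0;t)$ using the reproducing property and the fact that $G_{2m}(x)/x = F_{2m-1}^{t,[2]}(x)$ is the $(2m-1)$st kernel polynomial (with parameter $0$) up to normalization — that is, $F_{2m-1}^{t,[2]}(x)$ is proportional to $K_{2m-1}(x,0;t)\cdot(\text{const})/x$ is not quite right, but $x F_{2m-1}^{t,[2]}(x)$ and $K_{2m-1}(x,0;t)$ span the same one-dimensional complement picked out by the point mass at $0$. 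Either way the content is elementary once the connection formulas of Section~\ref{[SECTION-2]-KernelsP} are in hand; I would present the Lemma~\ref{CFKer[2]}-based version since it is the most direct.
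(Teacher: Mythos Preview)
Your proposal is correct and follows essentially the same route as the paper: start from \eqref{CF-1}, multiply by $\kappa_{2m}$, split off one copy of $F_{2m}^{t}(x)$, and then identify the remaining bracket with $xF_{2m-1}^{t,[2]}(x)$ using \eqref{Kernel[2]odd} together with \eqref{xi2odd-a2even}, \eqref{KoddZERO}, and \eqref{[S2]-Bcn}. The only cosmetic difference is that the paper first rewrites the bracket as $F_{2m}^{t}(x)-\dfrac{F_{2m}^{t}(0)}{[F_{2m-1}^{t}]^{\prime}(0)}\dfrac{F_{2m-1}^{t}(x)}{x}$ and then matches it directly against \eqref{Kernel[2]odd}, whereas you phrase the last step as a scalar identity to be checked; the ingredients and the logic are identical, and your ``bookkeeping'' does go through exactly as you anticipate once you use $K_{2m-1}(0,0;t)=K_{2m-2}(0,0;t)=[F_{2m-1}^{t}]^{\prime}(0)F_{2m-2}^{t}(0)/\|F_{2m-2}^{t}\|_{t}^{2}$.
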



\begin{proof}
Let ${\kappa }_{2m}$\ be given by the positive quantity ${\kappa }%
_{2m}=1+MK_{2m-1}(0,0;t)$. From the expression for $Q_{2m}^{t}(x)$\ in
Theorem \ref{xQ=F} we have%
\begin{equation*}
{\kappa }_{2m}Q_{2m}^{t}(x)={\kappa }%
_{2m}F_{2m}^{t}(x)-MF_{2m}^{t}(0)K_{2m-1}(x,0;t),\quad m\geqslant 1.
\end{equation*}

Being ${\kappa }_{2m}>0$, we call $\tilde{Q}_{2m}^{t}(x)={\kappa }%
_{2m}Q_{2m}^{t}(x)$ the polynomial with the same zeros that $Q_{2m}^{t}(x)$.
Next, (\ref{KoddZERO}) and (\ref{[S2]-Bcn}) yields%
\begin{eqnarray}
\tilde{Q}_{2m}^{t}(x) &=&F_{2m}^{t}(x)+M\left[
K_{2m-1}(0,0;t)F_{2m}^{t}(x)-F_{2m}^{t}(0)K_{2m-1}(x,0;t)\right]  \notag \\
&=&F_{2m}^{t}(x)-M\frac{[F_{2m-1}^{t}]^{\prime }(0)F_{2m}^{t}(0)}{%
||F_{2m-1}^{t}||_{t}^{2}}\left[ F_{2m}^{t}(x)-\frac{F_{2m}^{t}(0)}{%
[F_{2m-1}^{t}]^{\prime }(0)}\frac{F_{2m-1}^{t}(x)}{x}\right]  \label{squaQ2m}
\\
&=&F_{2m}^{t}(x)+MK_{2m-1}(0,0;t)\left[ F_{2m}^{t}(x)-\frac{F_{2m}^{t}(0)}{%
[F_{2m-1}^{t}]^{\prime }(0)}\frac{F_{2m-1}^{t}(x)}{x}\right]  \notag
\end{eqnarray}

On the other hand, replacing (\ref{xi2odd-a2even}) into (\ref{Kernel[2]odd})
and dividing by $x$ yields 
\begin{equation*}
xF_{2m-1}^{t,[2]}(x)=F_{2m}^{t}(x)-\frac{F_{2m}^{t}(0)}{[F_{2m-1}^{t}]^{%
\prime }(0)}\frac{F_{2m-1}^{t}(x)}{x}.
\end{equation*}%
From the above expression, we can finally rewrite (\ref{squaQ2m}) as%
\begin{equation*}
\tilde{Q}_{2m}^{t}(x)=F_{2m}^{t}(x)+MK_{2m-1}(0,0;t)xF_{2m-1}^{t,[2]}(x).
\end{equation*}%
This completes the proof.
\end{proof}



In the remaining of this section we will focus our attention on the
coefficients of the three term recurrence relation satisfied by the
Freud-type SMOP. Since $Q_{n}^{t}(x)$ are standard and symmetric, they
satisfy the following fundamental recurrence relation.



\begin{proposition}
\label{RR3TQ} The polynomials $Q_{n}^{t}(x)$ satisfy the three term
recurrence relation%
\begin{equation}
xQ_{n}^{t}(x)=Q_{n+1}^{t}(x)+\gamma _{n}(t)Q_{n-1}^{t}(x),
\label{RR3T-Tilde}
\end{equation}%
where%
\begin{equation}
\gamma _{n}(t)=\frac{b_{n}^{t}}{b_{n-1}^{t}}a_{n}^{2}(t).
\label{gamma-tilde}
\end{equation}
\end{proposition}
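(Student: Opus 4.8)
The plan is to read off the recurrence from general orthogonal polynomial theory and then identify the coefficient $\gamma_n(t)$ by combining it with Proposition \ref{FEnergy}. Since $\{Q_n^t\}_{n\geqslant 0}$ is a monic orthogonal polynomial sequence with respect to the inner product \eqref{Freud-typeInnProd}, for which multiplication by $x$ is symmetric, the classical three term recurrence relation applies: there exist real sequences $\beta_n,\gamma_n$ with $xQ_n^t(x)=Q_{n+1}^t(x)+\beta_n Q_n^t(x)+\gamma_n(t)Q_{n-1}^t(x)$. The measure $d\nu=d\mu_t+M\delta_0$ is even (the weight $\omega_t$ is even and the Dirac mass sits at the origin), so $\{Q_n^t\}_{n\geqslant 0}$ is a symmetric sequence and hence $\beta_n=0$ for all $n$; this is exactly the remark made just before the statement. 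That establishes \eqref{RR3T-Tilde}.

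Next I would pin down $\gamma_n(t)$. Taking $\langle\,\cdot\,,Q_{n-1}^t\rangle$ on both sides of \eqref{RR3T-Tilde}, using orthogonality and the symmetry $\langle xQ_n^t,Q_{n-1}^t\rangle=\langle Q_n^t,xQ_{n-1}^t\rangle=\langle Q_n^t,Q_n^t\rangle$ (the top-degree term of $xQ_{n-1}^t$ being $Q_n^t$), one gets the standard formula
\begin{equation*}
\gamma_n(t)=\frac{\|Q_n^t\|^2}{\|Q_{n-1}^t\|^2},\qquad n\geqslant 1.
\end{equation*}
Now I invoke Proposition \ref{FEnergy}, which gives $\|Q_n^t\|^2=b_n^t\,\|F_n^t\|_t^2$ and $\|Q_{n-1}^t\|^2=b_{n-1}^t\,\|F_{n-1}^t\|_t^2$, together with the Freud norm ratio $a_n^2(t)=\|F_n^t\|_t^2/\|F_{n-1}^t\|_t^2$ from Proposition \ref{FbMOPS}. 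Substituting,
\begin{equation*}
\gamma_n(t)=\frac{b_n^t\,\|F_n^t\|_t^2}{b_{n-1}^t\,\|F_{n-1}^t\|_t^2}=\frac{b_n^t}{b_{n-1}^t}\,a_n^2(t),
\end{equation*}
which is \eqref{gamma-tilde}.

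There is no real obstacle here: the argument is a direct chain of known facts, the only point requiring a word of justification being the vanishing of the diagonal recurrence coefficient, which follows from the evenness of $d\nu$ and was already noted in the text. One could alternatively derive the same identity more explicitly from the connection formula \eqref{ConnForm_1} (i.e.\ $xQ_n^t=F_{n+1}^t+B_n^t F_{n-1}^t$ with $B_n^t=a_n^2(t)b_n^t$) by eliminating the $F_k^t$ in favour of the $Q_k^t$ using Theorem \ref{xQ=F}, but this is a longer route and the norm-ratio computation above is the cleanest.
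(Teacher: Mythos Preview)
Your proof is correct and follows essentially the same approach as the paper: expand $xQ_n^t$ in the orthogonal basis $\{Q_k^t\}$, use the symmetry of the measure to kill the diagonal term, and identify $\gamma_n(t)=\|Q_n^t\|^2/\|Q_{n-1}^t\|^2$ via Proposition~\ref{FEnergy} and the Freud norm ratio. The paper's proof is just a terse one-line version of exactly this argument.
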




\begin{proof}
We expand $xQ_{n}^{t}(x)$ in terms of the SMOP $\{Q_{n}^{t}\}_{n\geqslant 0}$%
and, taking into account (\ref{Freud-typeInnProd}), the Lemma follows.
\end{proof}



\begin{proposition}[String equation]
The coefficients (\ref{gamma-tilde}) of the above three term recurrence
relation for $\left\{ Q_{n}^{t}\right\} _{n\geqslant 0}$\ satisfy the
following nonlinear difference string equation%
\begin{equation*}  \label{StringPertQ}
4\gamma _{n}^{2}(t)\left( \frac{b_{n-2}^{t}}{b_{n}^{t}}\gamma
_{n-1}^{2}(t)+\left( \frac{b_{n-1}^{t}}{b_{n}^{t}}\right) ^{2}\gamma
_{n}^{2}(t)+\frac{b_{n-1}^{t}}{b_{n+1}^{t}}\gamma _{n+1}^{2}(t)-\frac{t}{2}%
\right) =n.
\end{equation*}
\end{proposition}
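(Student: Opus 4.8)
The plan is to read the claimed string equation off the Freud string equation (\ref{StringEq0}) by inserting the change of recurrence coefficients recorded in Proposition \ref{RR3TQ}. That proposition expresses the recurrence coefficient $\gamma_n(t)$ of $\{Q_n^t\}$ through $a_n^2(t)$ and the ratios $b_n^t$; since by Proposition \ref{FEnergy} every $b_j^t$ is a strictly positive number ($b_j^t=1$ for $j$ odd and $b_j^t>1$ for $j$ even), the relation can be inverted to read each $a_n^2(t)$ in terms of $\gamma_n(t)$ and the $b_j^t$, namely
\begin{equation*}
a_n^2(t)=\frac{b_{n-1}^t}{b_n^t}\,\gamma_n^2(t),\qquad a_{n-1}^2(t)=\frac{b_{n-2}^t}{b_{n-1}^t}\,\gamma_{n-1}^2(t),\qquad a_{n+1}^2(t)=\frac{b_n^t}{b_{n+1}^t}\,\gamma_{n+1}^2(t).
\end{equation*}

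The first step is to substitute these three identities into (\ref{StringEq0}). The prefactor $4a_n^2(t)$ becomes $4\frac{b_{n-1}^t}{b_n^t}\gamma_n^2(t)$, and the second step is to distribute the ratio $\frac{b_{n-1}^t}{b_n^t}$ through the bracket $\big(a_{n-1}^2(t)+a_n^2(t)+a_{n+1}^2(t)-t\big)$, simplifying the products of consecutive ratios by the elementary cancellations
\begin{equation*}
\frac{b_{n-1}^t}{b_n^t}\cdot\frac{b_{n-2}^t}{b_{n-1}^t}=\frac{b_{n-2}^t}{b_n^t},\qquad \frac{b_{n-1}^t}{b_n^t}\cdot\frac{b_{n-1}^t}{b_n^t}=\Big(\frac{b_{n-1}^t}{b_n^t}\Big)^2,\qquad \frac{b_{n-1}^t}{b_n^t}\cdot\frac{b_n^t}{b_{n+1}^t}=\frac{b_{n-1}^t}{b_{n+1}^t}.
\end{equation*}
Collecting the terms produces precisely the coefficients $b_{n-2}^t/b_n^t$, $(b_{n-1}^t/b_n^t)^2$ and $b_{n-1}^t/b_{n+1}^t$ multiplying $\gamma_{n-1}^2(t)$, $\gamma_n^2(t)$ and $\gamma_{n+1}^2(t)$, the right-hand side being untouched and remaining equal to $n$; the linear term in $t$ is carried along by the same prefactor ratio, and one only needs to record its normalization to obtain the stated form. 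Because (\ref{StringEq0}) holds for $n\geqslant 1$ with $a_0^2(t)=0$, the resulting identity is valid for all $n$ for which every index occurring is admissible, i.e. for $n\geqslant 2$.

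I do not expect a deep obstacle: the whole argument is one substitution followed by routine simplification of ratios. The points needing attention are (i) keeping the index shifts straight, so that the mixed ratios $b_{n-2}^t/b_n^t$ and $b_{n-1}^t/b_{n+1}^t$ emerge with the correct indices; (ii) making sure no $b_j^t$ appearing in a denominator can vanish, which is immediate from $b_j^t\geqslant 1$ (Proposition \ref{FEnergy}); and (iii) pinning down the normalization of the $t$-term after the prefactor ratio has been distributed over it. No structural input beyond (\ref{StringEq0}), Proposition \ref{RR3TQ} and the positivity of the $b_j^t$ is required; in particular, no use of the ladder operators of Section \ref{[SECTION-4]-HEq-ElectrModel} or of a Pearson-type equation for the measure $d\nu$ is needed.
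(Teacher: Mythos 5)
Your route is exactly the paper's: the published proof is the single sentence that one replaces (\ref{gamma-tilde}) in the string equation (\ref{StringEq0}), which is precisely your inversion $a_{n}^{2}(t)=\frac{b_{n-1}^{t}}{b_{n}^{t}}\gamma _{n}^{2}(t)$ followed by distribution of the prefactor ratio, and the three quadratic terms do come out with the coefficients $b_{n-2}^{t}/b_{n}^{t}$, $\left( b_{n-1}^{t}/b_{n}^{t}\right) ^{2}$ and $b_{n-1}^{t}/b_{n+1}^{t}$ as you describe. The one step you defer, however --- ``pinning down the normalization of the $t$-term'' --- is not a normalization that can be chosen: distributing $\frac{b_{n-1}^{t}}{b_{n}^{t}}$ over $-t$ gives $-\frac{b_{n-1}^{t}}{b_{n}^{t}}\,t$, and this is not $-\frac{t}{2}$ in general. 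Indeed, for $M=0$ all $b_{j}^{t}=1$ and $\gamma _{n}^{2}(t)=a_{n}^{2}(t)$, so the displayed identity with $-\frac{t}{2}$ would contradict (\ref{StringEq0}), which carries $-t$. So your computation is the right one, but it establishes the equation with last term $-\frac{b_{n-1}^{t}}{b_{n}^{t}}\,t$ rather than $-\frac{t}{2}$; the $-\frac{t}{2}$ in the statement appears to be a misprint, and you should write the coefficient out explicitly instead of leaving it open, since as it stands your argument does not (and cannot) deliver the equation exactly as printed.
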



\begin{proof}
It is enough to replace (\ref{gamma-tilde}) in the string equation (\ref%
{StringEq0}), and then the Proposition follows.
\end{proof}



\section{Holonomic equation and electrostatic model}

\label{[SECTION-4]-HEq-ElectrModel}



Next, we give details of the second order linear differential equation
satisfied by $\{Q_{n}^{t}\}_{n\geqslant 0}$ when $t>0$. First, from (\ref%
{3TRR-Monic}) we can rewrite (\ref{ConnForm_1}) as%
\begin{equation}
xQ_{n}^{t}(x)=A_{1}(x,t;n)F_{n}^{t}(x)+B_{1}(t;n)F_{n-1}^{t}(x),
\label{[S3]-hxConnForm}
\end{equation}%
with%
\begin{eqnarray*}
A_{1}(x,t;n) &=&x, \\
B_{1}(t;n) &=&a_{n}^{2}(t)\left( b_{n}^{t}-1\right) .
\end{eqnarray*}

In order to obtain the ladder operators and the second order linear
differential equation, we follow a different approach as in \cite[Ch. 3]%
{Ism05}. Our technique is based on the connection formula (\ref{CF-1}), the
three term recurrence relation (\ref{3TRR-Monic}) satisfied by the SMOP $%
\{F_{n}^{t}\}_{n\geqslant 0}$, and its corresponding structure relation (\ref%
{[S2]-StructRelation}).

We begin by proving several lemmas which are needed for the proof of Theorem %
\ref{[S2]-THEO-4}.


\begin{lemma}
\label{[S3]-LEMMA-2}For the SMOP $\{Q_{n}^{t}\}_{n\geqslant 0}$ and $%
\{F_{n}^{t}\}_{n\geqslant 0}$ we have%
\begin{equation}
x[Q_{n}^{t}{(x)}]^{\prime
}=C_{1}(x,t;n)F_{n}^{t}(x)+D_{1}(x,t;n)F_{n-1}^{t}(x),
\label{[S3]-DxSn-C1D1}
\end{equation}%
where%
\begin{equation*}
\begin{array}{ll}
C_{1}(x,t;n)= & -4a_{n}^{2}(t)\left[ b_{n}^{t}\,x^{2}+\left(
b_{n}^{t}-1\right) \left( a_{n-1}^{2}(t)+a_{n}^{2}(t)-t\right) \right]
,\medskip \\ 
D_{1}(x,t;n)= & 4{a_{n}^{2}(t)}\,x\left( {a_{n+1}^{2}(t)+b_{n}^{t}}\left[
x^{2}-t+a_{n}^{2}(t)\right] \right) -\frac{1}{x}{a_{n}^{2}(t)\big(b_{n}^{t}-1%
\big)}.%
\end{array}%
\end{equation*}%
The coefficients $A_{1}(x,t;n)$, $B_{1}(t;n)$ are given in (\ref%
{[S3]-hxConnForm}), $b_{n}^{t}$ is given in (\ref{ratios-1}), and $a(x,t;n)$%
, $b(x,t;n)$ come from the structure relation (\ref{[S2]-StructRelation}).
\end{lemma}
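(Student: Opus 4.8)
The plan is to differentiate the connection formula $xQ_{n}^{t}(x)=A_{1}(x,t;n)F_{n}^{t}(x)+B_{1}(t;n)F_{n-1}^{t}(x)$ from (\ref{[S3]-hxConnForm}) with respect to $x$, and then systematically eliminate all unwanted derivatives using the tools collected in Proposition \ref{FbMOPS}. Explicitly, differentiating the left-hand side gives $Q_{n}^{t}(x)+x[Q_{n}^{t}(x)]^{\prime}$; differentiating the right-hand side (recalling $A_{1}=x$, $B_{1}=a_{n}^{2}(t)(b_{n}^{t}-1)$ is constant in $x$) gives $F_{n}^{t}(x)+x[F_{n}^{t}(x)]^{\prime}+a_{n}^{2}(t)(b_{n}^{t}-1)[F_{n-1}^{t}(x)]^{\prime}$. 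So the first step is to isolate $x[Q_{n}^{t}(x)]^{\prime}$ and obtain
\begin{equation*}
x[Q_{n}^{t}(x)]^{\prime}=F_{n}^{t}(x)+x[F_{n}^{t}(x)]^{\prime}+a_{n}^{2}(t)(b_{n}^{t}-1)[F_{n-1}^{t}(x)]^{\prime}-Q_{n}^{t}(x).
\end{equation*}

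The second step is to rewrite the right-hand side entirely in the basis $\{F_{n}^{t},F_{n-1}^{t}\}$. For the term $x[F_{n}^{t}(x)]^{\prime}$ I would multiply the structure relation (\ref{[S2]-StructRelation}) by $x$, getting $x[F_{n}^{t}]^{\prime}=a(x,t;n)F_{n}^{t}+b(x,t;n)F_{n-1}^{t}$ times $x$ — but it is cleaner to keep $[F_{n}^{t}]^{\prime}=a(x,t;n)F_{n}^{t}+b(x,t;n)F_{n-1}^{t}$ and multiply through by $x$ at the end. For the term $[F_{n-1}^{t}(x)]^{\prime}$, apply the structure relation with $n$ replaced by $n-1$: $[F_{n-1}^{t}]^{\prime}=a(x,t;n-1)F_{n-1}^{t}+b(x,t;n-1)F_{n-2}^{t}$, and then eliminate $F_{n-2}^{t}$ via the three term recurrence relation (\ref{3TRR-Monic}) in the form $a_{n-1}^{2}(t)F_{n-2}^{t}(x)=xF_{n-1}^{t}(x)-F_{n}^{t}(x)$. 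Finally, the stray term $Q_{n}^{t}(x)$ must be expressed through $F_{n}^{t}$ and $F_{n-1}^{t}$; here I use the already-established relation (\ref{ConnForm_1}), i.e. $xQ_{n}^{t}(x)=F_{n+1}^{t}(x)+B_{n}^{t}F_{n-1}^{t}(x)$, together with $F_{n+1}^{t}(x)=xF_{n}^{t}(x)-a_{n}^{2}(t)F_{n-1}^{t}(x)$ from (\ref{3TRR-Monic}), so that $xQ_{n}^{t}(x)=xF_{n}^{t}(x)+a_{n}^{2}(t)(b_{n}^{t}-1)F_{n-1}^{t}(x)$ — consistent with (\ref{[S3]-hxConnForm}) — and hence $Q_{n}^{t}(x)=F_{n}^{t}(x)+\tfrac{1}{x}a_{n}^{2}(t)(b_{n}^{t}-1)F_{n-1}^{t}(x)$, which explains the $1/x$ term appearing in $D_{1}$.

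The third step is pure bookkeeping: collect the coefficient of $F_{n}^{t}(x)$, which should simplify to $C_{1}(x,t;n)$, and the coefficient of $F_{n-1}^{t}(x)$, which should simplify to $D_{1}(x,t;n)$. Substituting the explicit forms $a(x,t;n)=-4a_{n}^{2}(t)x$ and $b(x,t;n)=4a_{n}^{2}(t)(x^{2}-t+a_{n}^{2}(t)+a_{n+1}^{2}(t))$, and the analogous expressions with $n\to n-1$, and using $B_{n}^{t}=a_{n}^{2}(t)b_{n}^{t}$, one has to verify that all the pieces coalesce into the stated $C_{1}$ and $D_{1}$; in particular the $x^{2}$-coefficient inside $C_{1}$ arising from $x\cdot a(x,t;n)$ combined with contributions from the $Q_{n}^{t}$ elimination should produce the factor $b_{n}^{t}$, and the constant-in-$x^{2}$ part should produce $(b_{n}^{t}-1)(a_{n-1}^{2}(t)+a_{n}^{2}(t)-t)$.

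The main obstacle is not conceptual but the algebra in the third step: one must carefully track how the recurrence coefficients $a_{n-1}^{2}$, $a_{n}^{2}$, $a_{n+1}^{2}$, the ratio $b_{n}^{t}$, and the parameter $t$ recombine, and in particular make sure the potentially singular $1/x$ contributions from $Q_{n}^{t}(x)$ and from $b(x,t;n-1)F_{n-2}^{t}$ are handled so that exactly the single $-\tfrac{1}{x}a_{n}^{2}(t)(b_{n}^{t}-1)$ term survives in $D_{1}$ and nothing spurious remains in $C_{1}$. A useful sanity check along the way is to set $M=0$ (so $b_{n}^{t}=1$ and $Q_{n}^{t}=F_{n}^{t}$): then $C_{1}$ and $D_{1}$ must reduce to $x\,a(x,t;n)$ and $x\,b(x,t;n)$ respectively, i.e. to the $x$-multiplied structure relation (\ref{[S2]-StructRelation}), which indeed they do.
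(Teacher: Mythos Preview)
Your proposal is correct and follows essentially the same route as the paper: differentiate the connection formula (\ref{[S3]-hxConnForm}), replace $[F_{n}^{t}]^{\prime}$ and $[F_{n-1}^{t}]^{\prime}$ via the structure relation (\ref{[S2]-StructRelation}) (the latter after an index shift and elimination of $F_{n-2}^{t}$ through (\ref{3TRR-Monic})), and collect coefficients. The only cosmetic difference is that the paper first multiplies the differentiated identity by $x$, obtaining $x^{2}[Q_{n}^{t}]^{\prime}=x^{2}[F_{n}^{t}]^{\prime}+xB_{1}[F_{n-1}^{t}]^{\prime}-B_{1}F_{n-1}^{t}$ directly (the substitution $xF_{n}^{t}-xQ_{n}^{t}=-B_{1}F_{n-1}^{t}$ happens implicitly), and then divides by $x$ at the very end, whereas you substitute $Q_{n}^{t}=F_{n}^{t}+\tfrac{1}{x}B_{1}F_{n-1}^{t}$ explicitly and carry the $1/x$ from the start; algebraically these are identical.
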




\begin{proof}
Shifting the index in (\ref{[S2]-StructRelation}) as $n\rightarrow n-1$, and
using (\ref{3TRR-Monic}) we obtain%
\begin{equation}
\lbrack F_{n-1}^{t}(x)]^{\prime }=\tilde{a}(x,t;n)F_{n}^{t}(x)+\tilde{b}%
(x,t;n)F_{n-1}^{t}(x),  \label{[S3]-Pnm1Der}
\end{equation}%
where%
\begin{eqnarray*}
\tilde{a}(x,t;n) &=&-4\left[ x^{2}-t+a_{n-1}^{2}(t)+a_{n}^{2}(t)\right], \\
\tilde{b}(x,t;n) &=&4x\left[ x^{2}-t+a_{n}^{2}(t)\right] .
\end{eqnarray*}%
Next, taking derivatives with respect to the variable $x$ in both sides of (%
\ref{[S3]-hxConnForm}), we get%
\begin{equation*}
x^{2}[Q^{t}{_{n}(x)]}^{\prime }=x^{2}[F_{n}^{t}(x)]^{\prime
}+xB_{1}(t;n)[F_{n-1}^{t}(x)]^{\prime }-B_{1}(t;n)F_{n-1}^{t}(x).
\end{equation*}%
Substituting (\ref{[S2]-StructRelation}), (\ref{[S3]-Pnm1Der}) and the
expression for $A_{1}(x,t;n)$, $B_{1}(t;n)$ into the above expression the
Lemma follows.
\end{proof}



From Proposition \ref{FEnergy}, $b_{n}^{t}=1$ when $n$ is odd, so the above
results can be simplified as%
\begin{equation*}
x[Q_{2m+1}^{t}{(x)}]^{\prime
}=C_{1}(x,t;2m+1)F_{2m+1}^{t}(x)+D_{1}(x,t;2m+1)F_{2m}^{t}(x),
\end{equation*}%
where%
\begin{equation*}
\begin{array}{ll}
C_{1}(x,t;2m+1)= & -4a_{2m+1}^{2}(t)\,x^{2},\medskip \\ 
D_{1}(x,t;2m+1)= & 4{a_{2m+1}^{2}(t)}\,x\left(
x^{2}-t+a_{2(n+1)}^{2}(t)+a_{2m+1}^{2}(t)\right) ,%
\end{array}%
\end{equation*}%
which corresponds to \eqref{[S2]-StructRelation}.



\begin{lemma}
\label{[S3]-LEMMA-3} The sequences of monic polynomials $\{Q_{n}^{t}\}_{n%
\geqslant 0}$ and $\{F_{n}^{t}\}_{n\geqslant 0}$ are also related by%
\begin{eqnarray}
xQ^{t}{_{n-1}(x)} &=&A_{2}(t;n)F_{n}^{t}(x)+B_{2}(x,t;n)F_{n-1}^{t}(x),
\label{[S3]-Snm1-A2D2} \\
x[Q_{n-1}^{t}(x)]^{\prime }
&=&C_{2}(x,t;n)F_{n}^{t}(x)+D_{2}(x,t;n)F_{n-1}^{t}(x),
\label{[S3]-DxSnm1-C2D2}
\end{eqnarray}%
where%
\begin{equation*}
\begin{split}
A_{2}(t;n)& =1-b_{n-1}^{t}, \\
B_{2}(x,t;n)& =xb_{n-1}^{t},
\end{split}%
\end{equation*}%
and%
\begin{equation*}
\begin{array}{ll}
C_{2}(x,t;n)= & (b_{n-1}^{t}-1)\left( 4xa_{n}^{2}(t)+\frac{1}{x}\right)
-4xb_{n-1}^{t}\left[ x^{2}-t+a_{n-1}^{2}(t)+a_{n}^{2}(t)\right] ,\medskip \\ 
D_{2}(x,t;n)= & 4a_{n}^{2}(t)\left( 1-b_{n-1}^{t}\right) \left[
x^{2}-t+a_{n}^{2}(t)+a_{n+1}^{2}(t)\right] +4x^{2}b_{n-1}^{t}\left[
x^{2}-t+a_{n}^{2}(t)\right] .%
\end{array}%
\end{equation*}
\end{lemma}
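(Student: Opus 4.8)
The plan is to mimic the derivation of Lemma~\ref{[S3]-LEMMA-2} almost verbatim, with the index shifted $n\rightarrow n-1$ and the polynomial $F_{n-2}^{t}$ eliminated through the three term recurrence relation. First I would shift the index in the connection formula (\ref{ConnForm_1}) to obtain $xQ_{n-1}^{t}(x)=F_{n}^{t}(x)+a_{n-1}^{2}(t)\,b_{n-1}^{t}\,F_{n-2}^{t}(x)$, and then use (\ref{3TRR-Monic}) in the form $a_{n-1}^{2}(t)F_{n-2}^{t}(x)=xF_{n-1}^{t}(x)-F_{n}^{t}(x)$ to rewrite the right-hand side purely in terms of $F_{n}^{t}$ and $F_{n-1}^{t}$. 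Collecting the coefficients of $F_{n}^{t}(x)$ and $F_{n-1}^{t}(x)$ then gives (\ref{[S3]-Snm1-A2D2}) directly, with $A_{2}(t;n)=1-b_{n-1}^{t}$ and $B_{2}(x,t;n)=xb_{n-1}^{t}$.

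For the second identity (\ref{[S3]-DxSnm1-C2D2}), I would differentiate (\ref{[S3]-Snm1-A2D2}) with respect to $x$, getting $Q_{n-1}^{t}(x)+x[Q_{n-1}^{t}(x)]^{\prime}=A_{2}(t;n)[F_{n}^{t}(x)]^{\prime}+b_{n-1}^{t}F_{n-1}^{t}(x)+xb_{n-1}^{t}[F_{n-1}^{t}(x)]^{\prime}$, and solve for $x[Q_{n-1}^{t}(x)]^{\prime}$. To close the computation I would re-expand $Q_{n-1}^{t}(x)$ itself from (\ref{[S3]-Snm1-A2D2}) as $Q_{n-1}^{t}(x)=\frac{A_{2}(t;n)}{x}F_{n}^{t}(x)+b_{n-1}^{t}F_{n-1}^{t}(x)$, then substitute the structure relation (\ref{[S2]-StructRelation}) for $[F_{n}^{t}(x)]^{\prime}$ together with the already-derived identity (\ref{[S3]-Pnm1Der}) for $[F_{n-1}^{t}(x)]^{\prime}$, and finally gather the coefficients of $F_{n}^{t}$ and $F_{n-1}^{t}$. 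The two contributions $+b_{n-1}^{t}F_{n-1}^{t}(x)$ and $-b_{n-1}^{t}F_{n-1}^{t}(x)$ cancel, and what is left should be exactly $C_{2}(x,t;n)$ and $D_{2}(x,t;n)$.

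I do not anticipate a genuine obstacle here: the argument is essentially bookkeeping with the explicit expressions for $a(x,t;n)$, $b(x,t;n)$, $\tilde{a}(x,t;n)$, $\tilde{b}(x,t;n)$ recorded earlier. The only point requiring a little care is the apparent pole at $x=0$ coming from the $\frac{A_{2}(t;n)}{x}F_{n}^{t}(x)$ contribution; this is harmless, since $A_{2}(t;n)=1-b_{n-1}^{t}$ vanishes unless $n-1$ is even, and in that case $F_{n}^{t}(0)=0$, so the surviving $\frac{1}{x}$ in $C_{2}(x,t;n)$ multiplies a polynomial. Beyond that, one simply verifies that the coefficient of $F_{n}^{t}$ collapses into the compact form $(b_{n-1}^{t}-1)(4xa_{n}^{2}(t)+\frac{1}{x})-4xb_{n-1}^{t}[x^{2}-t+a_{n-1}^{2}(t)+a_{n}^{2}(t)]$, which amounts to combining $A_{2}(t;n)\,a(x,t;n)=-4xa_{n}^{2}(t)(1-b_{n-1}^{t})$, the term $xb_{n-1}^{t}\,\tilde{a}(x,t;n)$, and the $-\frac{A_{2}(t;n)}{x}F_{n}^{t}(x)$ coming from moving $Q_{n-1}^{t}$ across the equality.
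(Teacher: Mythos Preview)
Your proposal is correct and follows essentially the same approach as the paper, which simply declares (\ref{[S3]-Snm1-A2D2}) and (\ref{[S3]-DxSnm1-C2D2}) to be a straightforward consequence of \eqref{3TRR-Monic}, (\ref{[S2]-StructRelation}), \eqref{[S3]-hxConnForm}, and Lemma~\ref{[S3]-LEMMA-2}. You have spelled out precisely those computations, and your parity remark about the apparent pole at $x=0$ is an accurate additional observation.
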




\begin{proof}
The proof of (\ref{[S3]-Snm1-A2D2}) and (\ref{[S3]-DxSnm1-C2D2}) is a
straightforward consequence of \eqref{3TRR-Monic}, (\ref{[S2]-StructRelation}%
), \eqref{[S3]-hxConnForm}, and Lemma \ref{[S3]-LEMMA-2}.
\end{proof}



By Proposition \ref{FEnergy} it is obvious that \eqref{[S3]-Snm1-A2D2} is
exactly the first equation of Theorem \ref{xQ=F} if $n$ is even and %
\eqref{[S3]-DxSnm1-C2D2} can be simplified as 
\begin{equation}
x[Q_{2m-1}^{t}{(x)}]^{\prime
}=C_{2}(x,t;2m)F_{2m}^{t}(x)+D_{2}(x,t;2m)F_{2m-1}^{t}(x),
\label{[S3]-DxSn-C2D2-even}
\end{equation}%
where%
\begin{equation*}
\begin{array}{ll}
C_{2}(x,t;2m)= & -4x\left( x^{2}-t+a_{2m-1}^{2}(t)+a_{2m}^{2}(t)\right) , \\ 
&  \\ 
D_{2}(x,t;2m)= & 4x^{2}\left( x^{2}-t+a_{2m}^{2}(t)\right) .%
\end{array}%
\end{equation*}%
An equivalent formulation of \eqref{[S3]-DxSn-C2D2-even} is (\ref%
{[S2]-StructRelation}) if we substitute $F_{2m}^{t}(x)$ in the above
relation according to \eqref{3TRR-Monic} when the index $n$ is shifted by $%
2m-1$.



The following lemma shows the converse of (\ref{[S3]-hxConnForm})--(\ref%
{[S3]-Snm1-A2D2}) for the polynomials $F_{n}^{t}(x)$ and $F_{n-1}^{t}(x)$.
Indeed, we express these two consecutive polynomials of the SMOP $%
\{F_{n}^{t}\}_{n\geqslant 0}$ in terms of only two consecutive Freud-type
orthogonal polynomials of the SMOP $\{Q_{n}^{t}\}_{n\geqslant 0}$.



\begin{lemma}
\label{[S3]-LEMMA-4}For $t>0$,%
\begin{eqnarray}
xb_{n-1}^{t} F_{n}^{t}(x) &=&xb_{n-1}^{t}Q_{n}^{t}(x)-a_{n}^{2}(t)\left(
b_{n}^{t}-1\right) Q_{n-1}^{t}(x),  \label{[S3]-InvR-Pn} \\
&&  \notag \\
xb_{n-1}^{t}F_{n-1}^{t}(x) &=&\left( b_{n-1}^{t}-1\right)
Q_{n}^{t}(x)+xQ_{n-1}^{t}(x).  \label{[S3]-InvR-Pnm1}
\end{eqnarray}
\end{lemma}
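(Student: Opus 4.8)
The plan is to read the two identities \eqref{[S3]-hxConnForm} and \eqref{[S3]-Snm1-A2D2} as a $2\times 2$ linear system in the unknowns $F_{n}^{t}(x)$ and $F_{n-1}^{t}(x)$, and to solve it by Cramer's rule. In matrix form the system is
\begin{equation*}
\begin{pmatrix} xQ_{n}^{t}(x) \\ xQ_{n-1}^{t}(x) \end{pmatrix}
=\begin{pmatrix} x & a_{n}^{2}(t)\big(b_{n}^{t}-1\big) \\[1mm] 1-b_{n-1}^{t} & xb_{n-1}^{t} \end{pmatrix}
\begin{pmatrix} F_{n}^{t}(x) \\ F_{n-1}^{t}(x) \end{pmatrix},
\end{equation*}
where I have used $A_{1}(x,t;n)=x$, $B_{1}(t;n)=a_{n}^{2}(t)(b_{n}^{t}-1)$, $A_{2}(t;n)=1-b_{n-1}^{t}$ and $B_{2}(x,t;n)=xb_{n-1}^{t}$. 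The coefficient determinant is $\Delta(x,t;n)=x^{2}b_{n-1}^{t}-a_{n}^{2}(t)\big(b_{n}^{t}-1\big)\big(1-b_{n-1}^{t}\big)$.

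The only nontrivial step is to observe that this determinant collapses to $x^{2}b_{n-1}^{t}$. Indeed, among the two consecutive integers $n-1$ and $n$ exactly one is odd, and by Proposition \ref{FEnergy} one has $b_{k}^{t}=1$ whenever $k$ is odd. Hence at least one of the factors $b_{n}^{t}-1$ and $b_{n-1}^{t}-1$ vanishes, so the product $\big(b_{n}^{t}-1\big)\big(1-b_{n-1}^{t}\big)=0$ and therefore $\Delta(x,t;n)=x^{2}b_{n-1}^{t}$, which is nonzero for $x\neq 0$ since $b_{n-1}^{t}>0$.

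Applying Cramer's rule for $x\neq 0$ then gives
\begin{equation*}
F_{n}^{t}(x)=\frac{1}{x^{2}b_{n-1}^{t}}\Big[\,xb_{n-1}^{t}\cdot xQ_{n}^{t}(x)-a_{n}^{2}(t)\big(b_{n}^{t}-1\big)\cdot xQ_{n-1}^{t}(x)\Big],
\end{equation*}
\begin{equation*}
F_{n-1}^{t}(x)=\frac{1}{x^{2}b_{n-1}^{t}}\Big[-\big(1-b_{n-1}^{t}\big)\cdot xQ_{n}^{t}(x)+x\cdot xQ_{n-1}^{t}(x)\Big].
\end{equation*}
Cancelling one factor of $x$ and multiplying through by $xb_{n-1}^{t}$ yields exactly \eqref{[S3]-InvR-Pn} and \eqref{[S3]-InvR-Pnm1}. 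Finally, since both sides of each of these identities are polynomials in $x$ that agree for all $x\neq 0$, they agree identically, which removes the apparent singularity at the origin. I do not expect any genuine obstacle here: the whole argument is the inversion of an explicit $2\times 2$ system, and the one place where care is needed is the parity remark that makes the determinant factor as $x^{2}b_{n-1}^{t}$ rather than a genuinely quadratic-in-$b$ expression.
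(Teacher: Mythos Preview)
Your proof is correct and follows essentially the same approach as the paper: interpret \eqref{[S3]-hxConnForm} and \eqref{[S3]-Snm1-A2D2} as a $2\times 2$ linear system in $F_{n}^{t}$ and $F_{n-1}^{t}$, apply Cramer's rule, and use the parity fact from Proposition~\ref{FEnergy} that one of $b_{n}^{t}-1$, $b_{n-1}^{t}-1$ vanishes to reduce the determinant to $x^{2}b_{n-1}^{t}$. Your extra remark that the resulting identities extend to $x=0$ by polynomial continuity is a nice touch that the paper leaves implicit.
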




\begin{proof}
Note that (\ref{[S3]-hxConnForm}) and (\ref{[S3]-Snm1-A2D2}) can be
interpreted as a system of two linear equations with two polynomial
unknowns, namely $F_{n}^{t}(x)$ and $F_{n-1}^{t}(x)$. Hence from Cramer's
rule, we have%
\begin{equation*}
\Delta (x,t;n)=x^{2}b_{n-1}^{t}+a_{n}^{2}(t)\left( b_{n}^{t}-1\right) \left(
b_{n-1}^{t}-1\right) .
\end{equation*}%
From Proposition \ref{FEnergy} we know that $b_{n}^{t}=1$ when $n$ is odd
and $b_{n}^{t}>1$ when $n$ is even. Therefore, the product $\left(
b_{n}^{t}-1\right) \left( b_{n-1}^{t}-1\right) $ is always zero, and the
Lemma easily follows.
\end{proof}



\begin{theorem}[Ladder operators]
\label{[S2]-THEO-4}Let $\mathfrak{a}_{n}$\ and $\mathfrak{a}_{n}^{\dag }$\
be the differential operators%
\begin{eqnarray*}
\mathfrak{a}_{n} &=&%
\begin{vmatrix}
A_{2}(t;n) & C_{1}(x,t;n) \\ 
B_{2}(x,t;n) & D_{1}(x,t;n)%
\end{vmatrix}%
+\Delta (x,t;n)\frac{d}{dx}, \\
\mathfrak{a}_{n}^{\dag } &=&%
\begin{vmatrix}
A_{1}(x,t;n) & C_{2}(x,t;n) \\ 
B_{1}(t;n) & D_{2}(x,t;n)%
\end{vmatrix}%
-\Delta (x,t;n)\frac{d}{dx},
\end{eqnarray*}%
satisfying%
\begin{eqnarray}
\mathfrak{a}_{n}[Q_{n}^{t}(x)] &=&%
\begin{vmatrix}
A_{1}(x,t;n) & C_{1}(x,t;n) \\ 
B_{1}(t;n) & D_{1}(x,t;n)%
\end{vmatrix}%
Q_{n-1}^{t}(x),  \label{[S2]-LoweringEq} \\
\mathfrak{a}_{n}^{\dag }[Q_{n-1}^{t}(x)] &=&%
\begin{vmatrix}
A_{2}(t;n) & C_{2}(x,t;n) \\ 
B_{2}(x,t;n) & D_{2}(x,t;n)%
\end{vmatrix}%
Q_{n}^{t}(x).  \label{[S2]-RaisingEq}
\end{eqnarray}


Let point out that all the above expressions are given only in terms of the
coefficients in \eqref{[S3]-hxConnForm}, \eqref{3TRR-Monic}, %
\eqref{[S3]-DxSn-C1D1}, \eqref{[S3]-Snm1-A2D2}, and \eqref{[S3]-DxSnm1-C2D2}%
. Thus, $\mathfrak{a}_{n}$\ and $\mathfrak{a}_{n}^{\dag }$\ are,\
respectively, lowering and raising operators associated with the Freud-type
SMOP $\{Q_{n}^{t}\}_{n\geqslant 0}$.
\end{theorem}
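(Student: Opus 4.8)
The plan is to regard the structural identities proved in the preceding lemmas as two small linear systems and to eliminate $F_n^t$, $F_{n-1}^t$ in favour of $Q_n^t$, $Q_{n-1}^t$. First I would note that \eqref{[S3]-hxConnForm} and \eqref{[S3]-Snm1-A2D2} together form the $2\times2$ linear system
\[
\begin{pmatrix} A_1(x,t;n) & B_1(t;n)\\ A_2(t;n) & B_2(x,t;n)\end{pmatrix}
\begin{pmatrix} F_n^t(x)\\ F_{n-1}^t(x)\end{pmatrix}
=\begin{pmatrix} xQ_n^t(x)\\ xQ_{n-1}^t(x)\end{pmatrix},
\]
whose coefficient determinant is exactly the quantity $\Delta(x,t;n)=A_1B_2-A_2B_1$ appearing in Lemma~\ref{[S3]-LEMMA-4}. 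Cramer's rule then gives $\Delta\,F_n^t=x\,(B_2Q_n^t-B_1Q_{n-1}^t)$ and $\Delta\,F_{n-1}^t=x\,(A_1Q_{n-1}^t-A_2Q_n^t)$, which is precisely the content of \eqref{[S3]-InvR-Pn}--\eqref{[S3]-InvR-Pnm1}.

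For the lowering relation I would start from \eqref{[S3]-DxSn-C1D1}, i.e. $x\,[Q_n^t(x)]'=C_1F_n^t+D_1F_{n-1}^t$, multiply through by $\Delta(x,t;n)$, and insert the two Cramer expressions for $\Delta F_n^t$ and $\Delta F_{n-1}^t$. The common factor $x$ cancels, leaving
\[
\Delta(x,t;n)\,[Q_n^t(x)]'=\bigl(C_1B_2-A_2D_1\bigr)Q_n^t(x)+\bigl(A_1D_1-B_1C_1\bigr)Q_{n-1}^t(x).
\]
Moving the $Q_n^t$ term to the left reproduces $\mathfrak{a}_n[Q_n^t(x)]$ --- the coefficient $A_2D_1-B_2C_1$ being precisely the $2\times2$ determinant in the definition of $\mathfrak{a}_n$ --- while the right-hand coefficient is $\begin{vmatrix}A_1&C_1\\B_1&D_1\end{vmatrix}$, which is \eqref{[S2]-LoweringEq}. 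The raising relation is obtained in the same way, starting instead from \eqref{[S3]-DxSnm1-C2D2}, $x\,[Q_{n-1}^t(x)]'=C_2F_n^t+D_2F_{n-1}^t$; substitution and cancellation of $x$ give $\Delta[Q_{n-1}^t]'=(C_2B_2-A_2D_2)Q_n^t+(A_1D_2-B_1C_2)Q_{n-1}^t$, and rearranging this identity with an overall sign change identifies the left-hand side with $\mathfrak{a}_n^\dag[Q_{n-1}^t(x)]$ and the right-hand side with $\begin{vmatrix}A_2&C_2\\B_2&D_2\end{vmatrix}Q_n^t(x)$, i.e. \eqref{[S2]-RaisingEq}. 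Since $\mathfrak{a}_n$ carries $Q_n^t$ to a polynomial multiple of $Q_{n-1}^t$ and $\mathfrak{a}_n^\dag$ carries $Q_{n-1}^t$ to a polynomial multiple of $Q_n^t$, these are genuine lowering and raising operators; composing them in the two possible orders then yields the holonomic equation.

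The step requiring the most care --- and the only genuine obstacle --- is the bookkeeping of the spurious factor $x$ together with the $1/x$ contributions carried by $C_1$, $D_1$ and $C_2$: one must verify that, after the substitution, the resulting operators have honest polynomial coefficients and that the cancellation of $x$ is legitimate. This is exactly where Proposition~\ref{FEnergy} is used: $b_n^t=1$ for odd $n$ forces both the product $(b_n^t-1)(b_{n-1}^t-1)$ and the accompanying singular terms to vanish, so that $\Delta(x,t;n)$ collapses to $x^2b_{n-1}^t$ and all denominators clear regardless of the parity of $n$. A secondary and purely routine point is matching the signs produced by the cofactor expansions with the determinant notation used in the statement.
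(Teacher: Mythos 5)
Your proposal is correct and follows essentially the same route as the paper: the paper's proof likewise substitutes the Cramer-rule inversions \eqref{[S3]-InvR-Pn}--\eqref{[S3]-InvR-Pnm1} of Lemma \ref{[S3]-LEMMA-4} into the derivative relations \eqref{[S3]-DxSn-C1D1} and \eqref{[S3]-DxSnm1-C2D2} and reads off the determinantal coefficients. Your extra remarks on the vanishing of $(b_{n}^{t}-1)(b_{n-1}^{t}-1)$ and the legitimacy of cancelling the factor $x$ only make explicit what the paper leaves implicit.
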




\begin{proof}
The proof of Theorem \ref{[S2]-THEO-4} follows from Lemmas \ref{[S3]-LEMMA-2}%
--\ref{[S3]-LEMMA-4}. Replacing (\ref{[S3]-InvR-Pn})--(\ref{[S3]-InvR-Pnm1})
in (\ref{[S3]-DxSn-C1D1}) and (\ref{[S3]-DxSnm1-C2D2}) one obtains the
ladder equations%
\begin{eqnarray*}
\lbrack Q_{n}^{t}(x)]^{\prime } &=&\frac{%
C_{1}(x,t;n)B_{2}(x,t;n)-A_{2}(t;n)D_{1}(x,t;n)}{\Delta (x,t;n)}Q_{n}^{t}(x)
\\
&&\quad \quad \quad \quad \quad \quad \quad \quad +\frac{%
A_{1}(x,t;n)D_{1}(x,t;n)-B_{1}(t;n)C_{1}(x,t;n)}{\Delta (x,t;n)}%
Q_{n-1}^{t}(x)
\end{eqnarray*}%
and%
\begin{eqnarray*}
\lbrack Q_{n-1}^{t}(x)]^{\prime } &=&\frac{%
C_{2}(x,t;n)B_{2}(x,t;n)-A_{2}(t;n)D_{2}(x,t;n)}{\Delta (x,t;n)}Q_{n}^{t}(x)
\\
&&\quad \quad \quad \quad \quad \quad \quad \quad +\frac{%
A_{1}(x,t;n)D_{2}(x,t;n)-B_{1}(t;n)C_{2}(x,t;n)}{\Delta (x,t;n)}%
Q_{n-1}^{t}(x),
\end{eqnarray*}%
which are equivalent to (\ref{[S2]-LoweringEq}) and (\ref{[S2]-RaisingEq}).
This completes the proof of Theorem \ref{[S2]-THEO-4}.
\end{proof}



\begin{theorem}[Holonomic equation]
\label{[S2]-THEO-5} The Freud-type polynomial $Q_{n}^{t}(x)$ satisfies the
holonomic equation (second order linear differential equation)%
\begin{equation}
\mathcal{A}(x,t;n)[Q_{n}^{t}(x)]^{\prime \prime }+\mathcal{B}%
(x,t;n)[Q_{n}^{t}(x)]^{\prime }+\mathcal{C}(x,t;n)Q_{n}^{t}(x)=0,
\label{[S2]-2ndODE}
\end{equation}%
where%
\begin{eqnarray*}
\mathcal{A}(x,t;n) &=&\Psi _{1,1}(x,t;n)\Delta ^{2}(x,t;n), \\
\mathcal{B}(x,t;n) &=&\Delta (x,t;n)\left( W\left\{ \Psi
_{1,1}(x,t;n),\Delta (x,t;n)\right\} +\Psi _{1,1}(x,t;n)\big[\Psi
_{2,1}(x,t;n)-\Psi _{1,2}(x,t;n)\big]\right), \\
\mathcal{C}(x,t;n) &=&\Delta (x,t;n)W\left\{ \Psi _{1,1}(x,t;n),\Psi
_{2,1}(x,t;n)\right\} +\Psi _{1,1}(x,t;n)%
\begin{vmatrix}
\Psi _{1,1}(x,t;n) & \Psi _{1,2}(x,t;n) \\ 
\Psi _{2,1}(x,t;n) & \Psi _{2,2}(x,t;n)%
\end{vmatrix}%
,
\end{eqnarray*}%
with%
\begin{equation*}
\Psi _{i,j}(x,t;n)=%
\begin{vmatrix}
A_{i}(x,t;n) & C_{j}(x,t;n) \\ 
B_{i}(x,t;n) & D_{j}(x,t;n)%
\end{vmatrix}%
, \quad i,j=1,2.
\end{equation*}%
Moreover, the polynomials $\mathcal{A}(x,t;n)$ and $\mathcal{C}(x,t;n)$ are
even functions and the polynomial $\mathcal{B}(x,t;n) $ is an odd function,
whose coefficients are showed in Table \ref{Tabla1:coef}.
\end{theorem}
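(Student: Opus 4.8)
The plan is to obtain the holonomic equation \eqref{[S2]-2ndODE} by eliminating $Q_{n-1}^{t}$, together with its derivative, between the two ladder equations displayed in the proof of Theorem \ref{[S2]-THEO-4}. Abbreviating $\Psi_{i,j}=\Psi_{i,j}(x,t;n)$ and $\Delta=\Delta(x,t;n)$, and using that $C_{1}B_{2}-A_{2}D_{1}=-\Psi_{2,1}$, $A_{1}D_{1}-B_{1}C_{1}=\Psi_{1,1}$, $C_{2}B_{2}-A_{2}D_{2}=-\Psi_{2,2}$ and $A_{1}D_{2}-B_{1}C_{2}=\Psi_{1,2}$, those equations read
\begin{align*}
\Delta\,[Q_{n}^{t}(x)]^{\prime} &= -\Psi_{2,1}\,Q_{n}^{t}(x)+\Psi_{1,1}\,Q_{n-1}^{t}(x),\\
\Delta\,[Q_{n-1}^{t}(x)]^{\prime} &= -\Psi_{2,2}\,Q_{n}^{t}(x)+\Psi_{1,2}\,Q_{n-1}^{t}(x).
\end{align*}
From the first of these I would solve $Q_{n-1}^{t}=\bigl(\Delta\,[Q_{n}^{t}]^{\prime}+\Psi_{2,1}\,Q_{n}^{t}\bigr)/\Psi_{1,1}$; this is the step that forces $\Psi_{1,1}$ to appear as a factor of the leading coefficient $\mathcal{A}$.

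Next I would differentiate the first ladder equation once with respect to $x$, use the second one to replace $[Q_{n-1}^{t}]^{\prime}$, and substitute the expression just found for $Q_{n-1}^{t}$. Multiplying the resulting identity by $\Delta\,\Psi_{1,1}$ to clear denominators leaves a second order linear relation in $Q_{n}^{t}$ alone: the coefficient of $[Q_{n}^{t}]^{\prime\prime}$ is $\Delta^{2}\Psi_{1,1}$; the coefficient of $[Q_{n}^{t}]^{\prime}$ collects into $\Delta\bigl(\Psi_{1,1}\Delta^{\prime}-\Psi_{1,1}^{\prime}\Delta\bigr)+\Delta\Psi_{1,1}\bigl(\Psi_{2,1}-\Psi_{1,2}\bigr)$; and the term free of derivatives collects into $\Delta\bigl(\Psi_{1,1}\Psi_{2,1}^{\prime}-\Psi_{1,1}^{\prime}\Psi_{2,1}\bigr)+\Psi_{1,1}\bigl(\Psi_{1,1}\Psi_{2,2}-\Psi_{1,2}\Psi_{2,1}\bigr)$. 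Recognizing the Wronskians $W\{\Psi_{1,1},\Delta\}$ and $W\{\Psi_{1,1},\Psi_{2,1}\}$ and the determinant of the matrix with entries $\Psi_{i,j}$ gives precisely $\mathcal{A}$, $\mathcal{B}$ and $\mathcal{C}$ as in the statement.

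For the parity assertions I would first record the behaviour in $x$ of the elementary ingredients, all inherited from the evenness of $\omega_{t}$: reading off \eqref{[S3]-hxConnForm}, \eqref{[S3]-DxSn-C1D1}, \eqref{[S3]-Snm1-A2D2} and \eqref{[S3]-DxSnm1-C2D2}, the functions $A_{1}$ and $B_{2}$ are odd, $B_{1}$ and $A_{2}$ do not depend on $x$, $C_{1}$ and $D_{2}$ are even, $D_{1}$ and $C_{2}$ are odd, and $\Delta$ is even. Consequently $\Psi_{1,1}$ and $\Psi_{2,2}$ are even while $\Psi_{2,1}$ and $\Psi_{1,2}$ are odd, whence $\mathcal{A}=\Psi_{1,1}\Delta^{2}$ and $\mathcal{C}$ are even and $\mathcal{B}$ is odd. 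The explicit entries of Table \ref{Tabla1:coef} then follow by substituting the closed forms of $A_{i},B_{i},C_{j},D_{j}$ and of $b_{n}^{t}$ into $\mathcal{A},\mathcal{B},\mathcal{C}$ and simplifying.

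The elimination itself is routine linear algebra; the real work lies in the bookkeeping. The delicate point is that $D_{1}$ and $C_{2}$ are not polynomials — they carry $1/x$ terms — so one has to verify that these drop out of the $\Psi_{i,j}$, hence out of $\mathcal{A},\mathcal{B},\mathcal{C}$: this works because the offending $1/x$ term is in each case either multiplied by the factor $x$ present in $A_{1}$ or $B_{2}$, or annihilated by the vanishing, according to the parity of $n$, of $B_{1}$ or $A_{2}$, exactly the mechanism already exploited in Lemma \ref{[S3]-LEMMA-4} via Proposition \ref{FEnergy}. After that, producing the closed forms in Table \ref{Tabla1:coef} is a long but purely mechanical simplification.
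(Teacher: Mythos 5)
Your elimination of $Q_{n-1}^{t}$ and $[Q_{n-1}^{t}]^{\prime}$ between the two first-order ladder relations is exactly the paper's argument (there phrased as applying $\mathfrak{a}_{n}^{\dag}$ to $\frac{1}{\Psi_{1,1}}\mathfrak{a}_{n}[Q_{n}^{t}]$), and it yields the same Wronskian-form coefficients $\mathcal{A}$, $\mathcal{B}$, $\mathcal{C}$. Your proposal is correct; your explicit parity bookkeeping and the check that the $1/x$ terms in $D_{1}$ and $C_{2}$ cancel out of the $\Psi_{i,j}$ (via the factor $x$ in $A_{1}$, $B_{2}$ or the vanishing of $(b_{n}^{t}-1)(b_{n-1}^{t}-1)$) are in fact more detailed than what the paper records.
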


\begin{proof}
The proof of Theorem \ref{[S2]-THEO-5} comes directly from the ladder
operators given in Theorem \ref{[S2]-THEO-4}. The usual technique consists
in applying the raising operator to both sides of the equation satisfied by
the lowering operator, i.e.%
\begin{equation*}
\mathfrak{a}_{n}^{\dag }\left[ Q_{n-1}^{t}(x)\right] =\mathfrak{a}_{n}^{\dag
}\left[ \frac{1}{\Psi _{1,1}(x,t;n)}\mathfrak{a}_{n}[Q_{n}^{t}(x)]\right]
=\Psi _{2,2}(x,t;n)Q_{n}^{t}(x).
\end{equation*}%
Thus,%
\begin{equation*}
\frac{\Psi _{1,2}(x,t;n)}{\Psi _{1,1}(x,t;n)}\mathfrak{a}_{n}[Q_{n}^{t}(x)]-%
\Delta (x,t;n)\frac{d}{dx}\left( \frac{1}{\Psi _{1,1}(x,t;n)}\mathfrak{a}%
_{n}[Q_{n}^{t}(x)]\right) =\Psi _{2,2}(x,t;n)Q_{n}^{t}(x)
\end{equation*}%
which becomes%
\begin{equation*}
\begin{array}{lll}
\frac{\Psi _{1,2}(x,t;n)\Psi _{2,1}(x,t;n)}{\Psi _{1,1}(x,t;n)}%
[Q_{n}^{t}(x)]+\frac{\Psi _{1,2}(x,t;n)\Delta (x,t;n)}{\Psi _{1,1}(x,t;n)}%
[Q_{n}^{t}(x)]^{\prime } &  &  \\ 
-\Delta (x,t;n)\frac{d}{dx}\left( \frac{\Psi _{2,1}(x,t;n)}{\Psi
_{1,1}(x,t;n)}[Q_{n}^{t}(x)]+\frac{\Delta (x,t;n)}{\Psi _{1,1}(x,t;n)}%
[Q_{n}^{t}(x)]^{\prime }\right) & = & \Psi _{2,2}(x,t;n)Q_{n}^{t}(x).%
\end{array}%
\end{equation*}%
The intermediate computations above yield%
\begin{eqnarray*}
\frac{d}{dx}\left( \frac{\Psi _{2,1}(x,t;n)}{\Psi _{1,1}(x,t;n)}%
[Q_{n}^{t}(x)]\right) &=&%
\begin{vmatrix}
\Psi _{1,1}(x,t;n) & \Psi _{2,1}(x,t;n) \\ 
\Psi _{1,1}^{\prime }(x,t;n) & \Psi _{2,1}^{\prime }(x,t;n)%
\end{vmatrix}%
\frac{Q_{n}^{t}(x)}{\Psi _{1,1}^{2}(x,t;n)}+\frac{\Psi _{2,1}(x,t;n)}{\Psi
_{1,1}(x,t;n)}[Q_{n}^{t}(x)]^{\prime }, \\
\frac{d}{dx}\left( \frac{\Delta (x,t;n)}{\Psi _{1,1}(x,t;n)}%
[Q_{n}^{t}(x)]^{\prime }\right) &=&%
\begin{vmatrix}
\Psi _{1,1}(x,t;n) & \Delta (x,t;n) \\ 
\Psi _{1,1}^{\prime }(x,t;n) & \Delta ^{\prime }(x,t;n)%
\end{vmatrix}%
\frac{[Q_{n}^{t}(x)]^{\prime }}{\Psi _{1,1}^{2}(x,t;n)}+\frac{\Delta (x,t;n)%
}{\Psi _{1,1}(x,t;n)}[Q_{n}^{t}(x)]^{\prime \prime },
\end{eqnarray*}%
and 
\begin{eqnarray*}
&&\frac{\Psi _{1,2}(x,t;n)\Psi _{2,1}(x,t;n)}{\Psi _{1,1}(x,t;n)}%
[Q_{n}^{t}(x)]+\frac{\Psi _{1,2}(x,t;n)\Delta (x,t;n)}{\Psi _{1,1}(x,t;n)}%
[Q_{n}^{t}(x)]^{\prime } \\
&&-\frac{\Delta (x,t;n)}{\Psi _{1,1}^{2}(x,t;n)}%
\begin{vmatrix}
\Psi _{1,1}(x,t;n) & \Psi _{2,1}(x,t;n) \\ 
\Psi _{1,1}^{\prime }(x,t;n) & \Psi _{2,1}^{\prime }(x,t;n)%
\end{vmatrix}%
[Q_{n}^{t}(x)]-\frac{\Delta (x,t;n)\Psi _{2,1}(x,t;n)}{\Psi _{1,1}(x,t;n)}%
[Q_{n}^{t}(x)]^{\prime } \\
&&-\frac{\Delta (x,t;n)}{\Psi _{1,1}^{2}(x,t;n)}%
\begin{vmatrix}
\Psi _{1,1}(x,t;n) & \Delta (x,t;n) \\ 
\Psi _{1,1}^{\prime }(x,t;n) & \Delta ^{\prime }(x,t;n)%
\end{vmatrix}%
[Q_{n}^{t}(x)]^{\prime }-\frac{\Delta ^{2}(x,t;n)}{\Psi _{1,1}(x,t;n)}%
[Q_{n}^{t}(x)]^{\prime \prime } \\
&=&\Psi _{2,2}(x,t;n)Q_{n}^{t}(x).
\end{eqnarray*}%
Combining all the above expressions, and after some cumbersome computations,
Theorem \ref{[S2]-THEO-5} follows.
\end{proof}


\begin{corollary}
For every nonnegative integer $n$, when $n$ is odd \eqref{[S2]-2ndODE} is
equivalent to $\mathcal{A}(x,t;n)$ times \eqref{EcDiLi} and, in the other
case the polynomial coefficients of \eqref{[S2]-2ndODE} are contained in
Table \ref{Tabla2:coefeven}.
\end{corollary}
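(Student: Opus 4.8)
The plan is to reduce both assertions to the parity dichotomy for the ratios $b_n^t$ recorded in Proposition \ref{FEnergy}: $b_n^t=1$ exactly when $n$ is odd, and $b_n^t>1$ when $n$ is even. Feeding this into the building blocks $A_i,B_i,C_j,D_j$ and into the discriminant $\Delta(x,t;n)$ of Lemma \ref{[S3]-LEMMA-4} collapses most of the $b$-dependence in the determinantal coefficients $\mathcal{A},\mathcal{B},\mathcal{C}$ of \eqref{[S2]-2ndODE}, and the corollary then becomes a matter of careful substitution and expansion.

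For $n$ odd one first uses $b_n^t=1$: this makes $B_1(t;n)=a_n^2(t)(b_n^t-1)=0$ in \eqref{[S3]-hxConnForm}, so \eqref{ConnForm_1} degenerates to $Q_n^t=F_n^t$ (in agreement with Theorem \ref{xQ=F}), and it removes the $1/x$ and $(b_n^t-1)$ terms from $C_1,D_1$ in \eqref{[S3]-DxSn-C1D1}. Hence $\Psi_{1,1}(x,t;n)=A_1D_1-B_1C_1=xD_1$ carries the factor $x^2-t+a_n^2(t)+a_{n+1}^2(t)$, which is precisely the denominator occurring in $R_n^t$ and $S_n^t$ in \eqref{EcDiLi}. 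It then remains to substitute $b_n^t=1$ everywhere and simplify; the residual parameter $b_{n-1}^t$ (present in $A_2,B_2,C_2,D_2$, hence in $\Psi_{2,1},\Psi_{1,2},\Delta$) should organize so that the ratios $\mathcal{B}/\mathcal{A}$ and $\mathcal{C}/\mathcal{A}$ reduce to $R_n^t$ and $S_n^t$, whose denominator is cancelled by the corresponding factor of $\Psi_{1,1}$, hence of $\mathcal{A}$. This identifies \eqref{[S2]-2ndODE} with $\mathcal{A}(x,t;n)$ times \eqref{EcDiLi}, consistent with the fact that $Q_n^t=F_n^t$ already solves \eqref{EcDiLi}.

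For $n$ even we instead have $b_{n-1}^t=1$ (since $n-1$ is odd) while $b_n^t>1$. Substituting $b_{n-1}^t=1$ kills $A_2(t;n)=1-b_{n-1}^t$, leaves $B_2(x,t;n)=x$, and by Lemma \ref{[S3]-LEMMA-4} collapses the discriminant to $\Delta(x,t;n)=x^2$; moreover $C_2,D_2$ in \eqref{[S3]-DxSnm1-C2D2} reduce to $C_2=-4x\big(x^2-t+a_{n-1}^2(t)+a_n^2(t)\big)$ and $D_2=4x^2\big(x^2-t+a_n^2(t)\big)$, recovering the simplified forms already displayed after \eqref{[S3]-DxSn-C2D2-even}. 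Plugging these together with the ($b_n^t$-dependent) $A_1,B_1,C_1,D_1$ into $\Psi_{i,j}=A_iD_j-B_iC_j$ produces four explicit polynomials in $x$ with coefficients in $t,a_{n-1}^2(t),a_n^2(t),a_{n+1}^2(t),b_n^t$; substituting them into $\mathcal{A}=\Psi_{1,1}\Delta^2$, $\mathcal{B}=\Delta\big(W\{\Psi_{1,1},\Delta\}+\Psi_{1,1}[\Psi_{2,1}-\Psi_{1,2}]\big)$ and $\mathcal{C}=\Delta\,W\{\Psi_{1,1},\Psi_{2,1}\}+\Psi_{1,1}\det(\Psi_{i,j})$, computing the $x$-Wronskians and expanding, delivers the polynomial coefficients of \eqref{[S2]-2ndODE}; collecting by powers of $x$ is what fills Table \ref{Tabla2:coefeven}. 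The parity claims ($\mathcal{A},\mathcal{C}$ even, $\mathcal{B}$ odd) are already part of Theorem \ref{[S2]-THEO-5} and reappear from the expansion.

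The main obstacle is entirely computational and lives in the even case: one has to differentiate $\Psi_{1,1}$ and $\Psi_{2,1}$ in $x$, assemble two Wronskians and the $2\times 2$ determinant of the $\Psi_{i,j}$, and then track degrees and coefficients carefully enough to populate Table \ref{Tabla2:coefeven} without sign, index, or $b_n^t$-placement errors, so a symbolic verification is advisable. In the odd case the only delicate point is to confirm the exact factorization $\mathcal{B}=\mathcal{A}R_n^t$, $\mathcal{C}=\mathcal{A}S_n^t$ rather than merely that $F_n^t=Q_n^t$ solves some second-order equation with leading coefficient a multiple of $\mathcal{A}$; the clean cancellation of $x^2-t+a_n^2(t)+a_{n+1}^2(t)$ against the matching factor of $\Psi_{1,1}$ is exactly what makes that identification work.
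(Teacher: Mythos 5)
Your proposal is correct and follows essentially the same route as the paper: both arguments rest on the parity dichotomy $b_n^t=1$ for $n$ odd, $b_n^t>1$ for $n$ even from Proposition \ref{FEnergy}, reducing the odd case to verifying the factorizations $\mathcal{B}=\mathcal{A}\,R_n^t$ and $\mathcal{C}=\mathcal{A}\,S_n^t$ (with $\Psi_{1,1}$ supplying the factor $x^2-t+a_n^2(t)+a_{n+1}^2(t)$) and the even case to substituting $b_{n-1}^t=1$ into the general coefficients to obtain Table \ref{Tabla2:coefeven}. You simply spell out the mechanics (the collapse of $\Delta$ to $b_{n-1}^t x^2$ or $x^2$, the Wronskians, the determinant of the $\Psi_{i,j}$) that the paper leaves implicit.
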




\begin{proof}
The procedure is to observe that in the case where the degree is odd, %
\eqref{[S2]-2ndODE} is reduced to the second order linear differential
equation \eqref{EcDiLi}, which is satisfied by $F_{2m+1}^{t}(x)$ since 
\begin{equation*}
\mathcal{B}(x,t;2m+1)=\mathcal{A}(x,t;2m+1)\,R_{2m+1}^{t}(x)\quad \text{and}%
\quad \mathcal{C}(x,t;2m+1)=\mathcal{A}(x,t;2m+1)\,S_{2m+1}^{t}(x).
\end{equation*}

On the other hand, if the degree is even then we get $b_{2m-1}^{t}=1$.
\end{proof}



For a deeper discussion of ladder operators we refer the reader to \cite[Ch.
3]{Ism05}. We next provide the second order linear differential equation
satisfied by the SMOP $\{Q_{n}^{t}\}_{n\geqslant 0}$ taking into account the
measure $\mu _{t}$ is semi-classical. This is the main tool for the further
electrostatic interpretation of zeros. Once we have the second order linear
differential equation satisfied by the SMOP $\{Q_{n}^{t}\}_{n\geqslant 0}$
it is easy to obtain an electrostatic model for their zeros. We will study
the asymptotic behavior of the position of the movable constant charges
involved in the external field. As we have shown in section \ref%
{[SECTION-3]-ConnForm} Freud-type orthogonal polynomials and Freud
polynomials of odd degree coincide. Thus, in this Section we shall derive
the electrostatic model for the zeros in the case when $n$ is even.

Let us evaluate \eqref{[S2]-2ndODE} at the zeros $\{y_{2m,i}(t)\}_{i=1}^{2m}$
of the polynomial $Q_{2m}^{t}(x)$, yielding 
\begin{eqnarray*}
{\frac{[Q_{2m}^{t}\big(y_{2m,i}(t)\big)]^{\prime \prime }}{[Q_{2m}^{t}\big(%
y_{2m,i}(t)\big)]^{\prime }}} &=&-{\frac{\mathcal{B}\big(y_{2m,i}(t),t;2m%
\big)}{\mathcal{A}\big(y_{2m,i}(t),t;2m\big)}} \\
&=&\frac{16b_{2m}^{t}y_{2m,i}^{3}(t)+8y_{2m,i}(t)\big[\big(%
a_{2m}^{2}(t)\,b_{2m}^{t}-t\big)b_{2m}^{t}+a_{2m+1}^{2}(t)\big]}{%
4b_{2m}^{t}y_{2m,i}^{4}(t)+4y_{2m,i}^{2}(t)\big[\big(a_{2m}^{2}(t)%
\,b_{2m}^{t}-t\big)b_{2m}^{t}+a_{2m+1}^{2}(t)\big]+h_{2m}^{t}} \\
&&-\frac{2}{y_{2m,i}(t)}+4\big(y_{2m,i}^{3}(t)-t\,y_{2m,i}(t)\big),
\end{eqnarray*}%
where%
\begin{equation*}
h_{2m}^{t}=\big(4a_{2m}^{2}(t)\big[a_{2m}^{2}(t)+a_{2m-1}^{2}(t)-t\big]\big[%
b_{2m}^{t}-1\big]-1\big)\big(b_{2m}^{t}-1\big).
\end{equation*}%
The above equation reads as the electrostatic equilibrium condition for $%
\{y_{{2m},i}(t)\}_{i=1}^{2m}$. Having 
\begin{equation}
u(x,t;2m)=4b_{2m}^{t}x^{4}+4x^{2}\big(\lbrack
a_{2m}^{2}(t)\,b_{2m}^{t}-t]b_{2m}^{t}+a_{2m+1}^{2}(t)\big)+h_{2m}^{t}
\label{u}
\end{equation}%
the previous condition can be rewritten as 
\begin{equation*}
\sum_{j=1,j\neq i}^{2m}{\frac{1}{y_{{2m},j}(t)-y_{{2m},i}(t)}}+{\frac{1}{2}}{%
\frac{[u]^{\prime }\big(y_{{2m},i}(t),t;{2m}\big)}{u\big(y_{{2m},i}(t),t;{2m}%
\big)}}-\frac{1}{y_{2m,i}(t)}+{2}\big(y_{{2m},i}^{3}(t)-ty_{{2m},i}(t)\big)%
=0,
\end{equation*}%
%
%
%
%
%
%
%
%
%
%
%
%
%
%
%
%
%
%
%
%
%
%
%
%
%
%
%
%
%
%
%
%
%
%
%
%
%
%
%
%
%
%
%
%
%
%
%
%
%
%
%
%
which means that the set of zeros $\{y_{{2m},i}(t)\}_{i=1}^{2m}$ are the
critical points (zeros of the gradient) of the total energy. Hence, the
electrostatic interpretation of the distribution of zeros means that we have
an equilibrium position under the presence of an external potential 
\begin{equation*}
V^{ext}(x)={\frac{1}{2}}\ln u(x,t;2m)-{\frac{1}{2}}\ln
x^{2}e^{-x^{4}+2tx^{2}},
\end{equation*}%
where the first term represents a \textit{short range potential}
corresponding to a unit charge located at the real zeros of the quartic
polynomial $u(x,t;2m)$, and the second one is a \textit{long range potential}
associated with the Freud weight function.



\section{Zeros of Freud-type SMOP}

\label{[SECTION-5]-Zeros}



Having in mind the techniques shown in \cite{IW-AJMS11}, we will first study
the motion of zeros of time depending of the polynomial $F_{n}^{t}(x)$
(Theorem \ref{motionzerosF}) and, finally, in subsection \ref%
{[SECTION-6.2]-EqMotion-FKrall} we give the differential equation that the
zeros of the polynomial $Q_{n}^{t}(x)$ satisfy.



\subsection{Equations of motion for zeros of $\{F_{n}^{t}(x)\}_{n\geqslant
0} $}

\label{[SECTION-6.1]-EqMotion-F}



Notice that the three term recurrence relation \eqref{3TRR-Monic} implies
that 
\begin{equation}
x^{2}\,F_{n}^{t}(x)=F_{n+2}^{t}(x)+\big[a_{n+1}^{2}(t)+a_{n}^{2}(t)\big]%
F_{n}^{t}(x)+a_{n}^{2}(t)\,a_{n-1}^{2}(t)\,F_{n-2}^{t}(x)  \label{x2}
\end{equation}%
for all $n\geqslant 1$ and $F_{-1}^{t}(x)=0$. The requirement on ${\frac{%
\partial F_{n}^{t}}{\partial t}}(x)$ is that 
\begin{equation*}
{\frac{\partial F_{n}^{t}}{\partial t}}(x)=\sum_{i=0}^{n-2}\flat
_{n,i}\,F_{i}^{t}(x),
\end{equation*}%
with $\displaystyle\flat _{n,i}={\frac{\left\langle {\frac{\partial F_{n}^{t}%
}{\partial t}},F_{i}^{t}\right\rangle _{t}}{\left\Vert F_{i}^{t}\right\Vert
_{t}^{2}}}$. From 
\begin{equation*}
0=\int_{-\infty }^{\infty }F_{n}^{t}(x)\,F_{i}^{t}(x)\,\omega
_{t}(x)dx,\quad 0\leqslant i\leqslant n-2,
\end{equation*}%
differentiating the above expression with respect to $t$, we obtain 
\begin{equation*}
0=\int_{-\infty }^{\infty }\left[ F_{i}^{t}(x)\,{\frac{\partial F_{n}^{t}}{%
\partial t}}(x)+F_{n}^{t}(x){\frac{\partial F_{i}^{t}}{\partial t}}(x)\right]
\omega _{t}(x)dx+2\int_{-\infty }^{\infty
}x^{2}\,F_{n}^{t}(x)\,F_{i}^{t}(x)\,\omega _{t}(x)dx.
\end{equation*}%
Hence, since $\displaystyle{\frac{\partial F_{n}^{t}}{\partial t}}(x)$ is a
linear combination of the first $n$ elements of the sequence $%
\{F_{k}^{t}\}_{k\geqslant 0}$, using \eqref{x2} we get 
\begin{equation*}
\left\langle {\frac{\partial F_{n}^{t}}{\partial t}},F_{i}^{t}\right\rangle
_{t}=-2a_{n}^{2}(t)\,a_{n-1}^{2}(t)\,\left\Vert F_{n-2}^{t}\right\Vert
_{t}^{2}\,\delta _{n-2,i},\quad 0\leqslant i\leqslant n-2,
\end{equation*}%
and 
\begin{equation}
{\frac{\partial F_{n}^{t}}{\partial t}}(x)=-2a_{n}^{2}(t)\,a_{n-1}^{2}(t)%
\,F_{n-2}^{t}(x)  \label{3.7mehi}
\end{equation}%
as claimed for $t>0$ and $n\geqslant 1$ with $F_{-1}^{t}(x)=0$.

We can now formulate this result



\begin{theorem}[Equation of motion for zeros of $F_{n}^{t}(x)$]
\label{motionzerosF} Let $n$ be a positive integer and $t>0$. If $%
x_{n,1}(t),\dots ,x_{n,n}(t)$ are the $n$ zeros of $F_{n}^{t}(x)$, then 
\begin{equation*}
{\frac{\partial x_{n,k}(t)}{\partial t}}={\frac{x_{n,k}(t)}{2\left(
x_{n,k}^{2}(t)-t+a_{n}^{2}(t)+a_{n+1}^{2}(t)\right) }}.
\end{equation*}
\end{theorem}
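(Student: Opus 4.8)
The plan is to differentiate the identity $F_{n}^{t}\big(x_{n,k}(t)\big)=0$ with respect to $t$, treating $x_{n,k}=x_{n,k}(t)$ as an implicit function of $t$. By the chain rule this gives
\begin{equation*}
\frac{\partial F_{n}^{t}}{\partial t}\big(x_{n,k}(t)\big)
+[F_{n}^{t}]^{\prime}\big(x_{n,k}(t)\big)\,\frac{\partial x_{n,k}(t)}{\partial t}=0,
\end{equation*}
so that
\begin{equation*}
\frac{\partial x_{n,k}(t)}{\partial t}
=-\,\frac{\dfrac{\partial F_{n}^{t}}{\partial t}\big(x_{n,k}(t)\big)}{[F_{n}^{t}]^{\prime}\big(x_{n,k}(t)\big)}.
\end{equation*}
This is legitimate because the zeros of $F_{n}^{t}$ are real and simple, hence $[F_{n}^{t}]^{\prime}(x_{n,k})\neq 0$, and they depend smoothly on $t$.

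Next I would insert the two structural facts already available. For the numerator, formula \eqref{3.7mehi} gives $\partial_{t}F_{n}^{t}(x)=-2a_{n}^{2}(t)a_{n-1}^{2}(t)F_{n-2}^{t}(x)$, so at $x=x_{n,k}(t)$ the numerator equals $-2a_{n}^{2}(t)a_{n-1}^{2}(t)F_{n-2}^{t}\big(x_{n,k}(t)\big)$. For the denominator I would use the structure relation \eqref{[S2]-StructRelation}, namely $[F_{n}^{t}]^{\prime}(x)=a(x,t;n)F_{n}^{t}(x)+b(x,t;n)F_{n-1}^{t}(x)$, which at a zero of $F_{n}^{t}$ collapses to $[F_{n}^{t}]^{\prime}\big(x_{n,k}(t)\big)=b\big(x_{n,k}(t),t;n\big)F_{n-1}^{t}\big(x_{n,k}(t)\big)$ with $b(x,t;n)=4a_{n}^{2}(t)\big(x^{2}-t+a_{n}^{2}(t)+a_{n+1}^{2}(t)\big)$. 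Thus
\begin{equation*}
\frac{\partial x_{n,k}(t)}{\partial t}
=\frac{2a_{n}^{2}(t)a_{n-1}^{2}(t)\,F_{n-2}^{t}\big(x_{n,k}(t)\big)}
{4a_{n}^{2}(t)\big(x_{n,k}^{2}(t)-t+a_{n}^{2}(t)+a_{n+1}^{2}(t)\big)F_{n-1}^{t}\big(x_{n,k}(t)\big)}.
\end{equation*}

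It then remains to evaluate the ratio $a_{n-1}^{2}(t)F_{n-2}^{t}\big(x_{n,k}(t)\big)\big/\big(2F_{n-1}^{t}\big(x_{n,k}(t)\big)\big)$ and show it equals $x_{n,k}(t)/2$. The three term recurrence \eqref{3TRR-Monic} in the form $xF_{n-1}^{t}(x)=F_{n}^{t}(x)+a_{n-1}^{2}(t)F_{n-2}^{t}(x)$, evaluated at $x=x_{n,k}(t)$ where $F_{n}^{t}$ vanishes, gives precisely $a_{n-1}^{2}(t)F_{n-2}^{t}\big(x_{n,k}(t)\big)=x_{n,k}(t)F_{n-1}^{t}\big(x_{n,k}(t)\big)$. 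Substituting this, the factors $a_{n}^{2}(t)$ and $F_{n-1}^{t}\big(x_{n,k}(t)\big)$ cancel (the latter being nonzero since consecutive orthogonal polynomials share no zeros), and one is left with $\partial_{t}x_{n,k}(t)=x_{n,k}(t)\big/\big(2(x_{n,k}^{2}(t)-t+a_{n}^{2}(t)+a_{n+1}^{2}(t))\big)$, as claimed. I do not anticipate a serious obstacle here; the only points requiring a word of justification are the smooth dependence of the simple zeros on $t$ (so that the implicit differentiation is valid) and the non-vanishing of $F_{n-1}^{t}$ and of $b(x_{n,k},t;n)$ at the relevant points — the latter following from the electrostatic/holonomic structure, where the factor $x^{2}-t+a_{n}^{2}(t)+a_{n+1}^{2}(t)$ already appears as a denominator in $R_{n}^{t}$ and $S_{n}^{t}$ in \eqref{EcDiLi} and is therefore nonzero at the zeros of $F_{n}^{t}$.
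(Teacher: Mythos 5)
Your proposal is correct and follows essentially the same route as the paper: implicit differentiation of $F_{n}^{t}\big(x_{n,k}(t)\big)=0$, then insertion of \eqref{3.7mehi} for the $t$-derivative, the structure relation \eqref{[S2]-StructRelation} collapsed at a zero for the $x$-derivative, and the three term recurrence \eqref{3TRR-Monic} to replace $a_{n-1}^{2}(t)F_{n-2}^{t}\big(x_{n,k}(t)\big)$ by $x_{n,k}(t)F_{n-1}^{t}\big(x_{n,k}(t)\big)$. The only cosmetic difference is your closing justification of the non-vanishing of $x_{n,k}^{2}-t+a_{n}^{2}(t)+a_{n+1}^{2}(t)$: the cleaner argument is simply that $[F_{n}^{t}]^{\prime}(x_{n,k})\neq 0$ (simple zeros) forces both factors $b\big(x_{n,k},t;n\big)$ and $F_{n-1}^{t}(x_{n,k})$ to be nonzero, rather than an appeal to the coefficients of \eqref{EcDiLi}.
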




\begin{proof}
Given $n\geqslant1$ and $t>0$, we have 
\begin{equation}  \label{3.9mehi}
F_n^t\big(x_{n,k}(t)\big)=0,\quad 1\leqslant k\leqslant n.
\end{equation}
If we differentiate the above equation with respect to time $t$, we get 
\begin{eqnarray}  \label{=0}
{\frac{\partial x_{n,k}(t) }{\partial t}}\,{\frac{\partial F_n^t}{\partial x}%
}(x) \bigg\rvert_{x=x_{n,k}(t)}+{\frac{\partial F_n^t}{\partial t}}\big(%
x_{n,k}(t)\big)=0.
\end{eqnarray}

On the other hand, from \eqref{[S2]-StructRelation} and \eqref{3.9mehi}, we
obtain 
\begin{equation*}
{\frac{\partial F_n^t}{\partial x}}(x)\bigg\rvert_{x=x_{n,k}(t)}= b\big(%
x_{n,k}(t),t;n\big)\,F_{n-1}^t\big(x_{n,k}(t)\big).
\end{equation*}
Hence, using \eqref{3.7mehi}, \eqref{=0} and the above formula, it follows
that 
\begin{equation}  \label{last}
{\frac{\partial x_{n,k}(t) }{\partial t}}= {\frac{a_{n-1}^2(t)\,F_{n-2}^t%
\big(x_{n,k}(t)\big)}{2\left(
x_{n,k}^{2}(t)-t+a_{n}^{2}(t)+a_{n+1}^{2}(t)\right)\,F_{n-1}^t\big(x_{n,k}(t)%
\big)}}.
\end{equation}
Finally, we obtain the result since the three term recurrence relation %
\eqref{3TRR-Monic} yields 
\begin{equation*}
F_{n-2}^t\big(x_{n,k}(t)\big)={\frac{x_{n,k}(t)}{a_{n-1}^2(t)}}\,F_{n-1}^t%
\big(x_{n,k}(t)\big).
\end{equation*}
\end{proof}



\subsection{Equations of motion for zeros of Freud-type orthogonal
polynomials}

\label{[SECTION-6.2]-EqMotion-FKrall}



It is required that 
\begin{equation*}
{\frac{\partial Q_{n}^{t}}{\partial t}}(y)=\sum_{i=0}^{n-2}\widetilde{\flat }%
_{n,i}\,Q_{i}^{t}(y),
\end{equation*}%
with $\displaystyle\widetilde{\flat }_{n,i}={\frac{\left\langle {\frac{%
\partial Q_{n}^{t}}{\partial t}},Q_{i}^{t}\right\rangle }{\left\Vert
Q_{i}^{t}\right\Vert ^{2}}}$. We first compute the coefficients $\widetilde{%
\flat }_{n,i}$, $0\leqslant i\leqslant n-2$. From the orthogonality
relations 
\begin{equation*}
0=\int_{-\infty }^{\infty }Q_{n}^{t}(y)\,Q_{i}^{t}(y)\,\omega
_{t}(y)dy+MQ_{n}^{t}(0)Q_{i}^{t}(0),\quad 0\leqslant i\leqslant n-2.
\end{equation*}%
Taking the partial derivative with respect to the variable $t$ in the above
equation we obtain, for $0\leqslant i\leqslant n-1$, 
\begin{equation*}
\left\langle {\frac{\partial Q_{n}^{t}}{\partial t}}(y),Q_{i}^{t}(y)\right%
\rangle =-2\left\langle Q_{n}^{t}(y),y^{2}Q_{i}^{t}(y)\right\rangle _{t}.
\end{equation*}%
Applying \eqref{Kn-PropRepro} and \eqref{CF-1} in the above equality we get 
\begin{equation*}
\widetilde{\flat }_{n,i}=0,\quad 0\leqslant i\leqslant n-3,
\end{equation*}%
since 
\begin{equation*}
\left\langle Q_{n}^{t}(y),y^{2}Q_{i}^{t}(y)\right\rangle _{t}=\left\langle
F_{n}^{t}(y),y^{2}Q_{i}^{t}(y)\right\rangle _{t}-MQ_{n}^{t}(0)\left\langle
K_{n-1}(x,0;t),y^{2}Q_{i}^{t}(y)\right\rangle _{t}.
\end{equation*}%
Then, we get 
\begin{equation*}
{\frac{\partial Q_{n}^{t}}{\partial t}}(y)=\widetilde{\flat }%
_{n,n-2}\,Q_{n-2}^{t}(y),
\end{equation*}%
where 
\begin{equation*}
\widetilde{\flat }_{n,n-2}=-{\frac{2}{b_{n-2}^{t}}}\left(
a_{n}^{2}(t)a_{n-1}^{2}(t)+{\frac{M[F_{n}^{t}(0)]^{2}}{\big(1+MK_{n-1}(0,0;t)%
\big)\left\Vert {F_{n-2}^{t}}\right\Vert _{t}^{2}}}\right).
\end{equation*}


Let us denote by $y_{n,1}(t),\dots ,y_{n,n}(t)$ the $n$ zeros of polynomial $%
Q_{n}^{t}(y)$. From \eqref{RR3T-Tilde} it follows that

\begin{equation}  \label{dotQ}
{\frac{\partial Q_{n}^{t}}{\partial t}}\big(y_{n,k}(t)\big)=\widetilde{\flat 
}_{n,n-2}{\frac{y_{n,k}(t)\,b_{n-2}^{t}}{b_{n-1}^{t}\,a_{n-1}^{2}(t)}}%
Q_{n-1}^{t}\big(y_{n,k}(t)\big).
\end{equation}

We can now state the analogue of Theorem \ref{motionzerosF} for the zeros of
Freud-type polynomials.



\begin{theorem}[Equation of motion for zeros of $Q_{n}^{t}(y)$]
\label{motionzerosQ} Let $n$ be a positive integer and $t>0$. If $%
y_{n,1}(t),\dots ,y_{n,n}(t)$ are the $n$ zeros of $Q_{n}^{t}(y)$, then 
\begin{equation*}
{\frac{\partial y_{n,k}}{\partial t}}(t)={\frac{-y_{n,k}^{2}(t)\,\widetilde{%
\flat }_{n,n-2}{\frac{b_{n-2}^{t}}{b_{n-1}^{t}\,a_{n-1}^{2}(t)}}Q_{n-1}^{t}%
\big(y_{n,k}(t)\big)}{C_{k,1}(n,t)F_{n}^{t}\big(y_{n,k}(t)\big)%
+C_{k,2}(n,t)F_{n-1}^{t}\big(y_{n,k}(t)\big)}},
\end{equation*}%
where 
\begin{eqnarray*}
C_{k,1}(n,t) &=&1-4a_{n}^{2}(t)\big[b_{n}^{t}y_{n,k}^{2}(t)+\big(b_{n}^{t}-1%
\big)\big(a_{n}^{2}(t)+a_{n-1}^{2}(t)-t\big)\big], \\
C_{k,2}(n,t) &=&4a_{n}^{2}(t)\,y_{n,k}(t)\left[ b_{n}^{t}\big(%
y_{n,k}^{2}(t)-t+a_{n}^{2}(t)\big)+a_{n+1}^{2}(t)\right] .
\end{eqnarray*}
\end{theorem}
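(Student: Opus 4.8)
The plan is to follow the scheme of Theorem~\ref{motionzerosF}: implicitly differentiate the relation that defines a zero of $Q_{n}^{t}$, and then recast the resulting quotient into the announced shape using the connection and ladder identities of Sections~\ref{[SECTION-3]-ConnForm} and \ref{[SECTION-4]-HEq-ElectrModel}.

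First I would differentiate $Q_{n}^{t}\big(y_{n,k}(t)\big)=0$ with respect to $t$ to get
\[
\frac{\partial y_{n,k}}{\partial t}(t)\,[Q_{n}^{t}]^{\prime}\big(y_{n,k}(t)\big)+\frac{\partial Q_{n}^{t}}{\partial t}\big(y_{n,k}(t)\big)=0 ,
\]
so that $\dfrac{\partial y_{n,k}}{\partial t}(t)=-\dfrac{\partial_{t}Q_{n}^{t}\big(y_{n,k}(t)\big)}{[Q_{n}^{t}]^{\prime}\big(y_{n,k}(t)\big)}$. Multiplying numerator and denominator by $y_{n,k}(t)$ (for odd $n$ the zero at the origin is covered by the identity $Q_{n}^{t}=F_{n}^{t}$ and Theorem~\ref{motionzerosF}), the numerator becomes $-y_{n,k}(t)\,\partial_{t}Q_{n}^{t}\big(y_{n,k}(t)\big)$, which by \eqref{dotQ} is precisely $-y_{n,k}^{2}(t)\,\widetilde{\flat}_{n,n-2}\,\dfrac{b_{n-2}^{t}}{b_{n-1}^{t}\,a_{n-1}^{2}(t)}\,Q_{n-1}^{t}\big(y_{n,k}(t)\big)$, the numerator in the statement. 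Recall that \eqref{dotQ} itself follows from $\partial_{t}Q_{n}^{t}=\widetilde{\flat}_{n,n-2}Q_{n-2}^{t}$ together with the three term recurrence \eqref{RR3T-Tilde} evaluated at $x=y_{n,k}(t)$, namely $Q_{n-2}^{t}\big(y_{n,k}(t)\big)=\tfrac{y_{n,k}(t)}{\gamma_{n-1}(t)}Q_{n-1}^{t}\big(y_{n,k}(t)\big)$, and \eqref{gamma-tilde}.

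The substantial part is the denominator $y_{n,k}(t)\,[Q_{n}^{t}]^{\prime}\big(y_{n,k}(t)\big)$. I would evaluate Lemma~\ref{[S3]-LEMMA-2}, equation \eqref{[S3]-DxSn-C1D1}, at $x=y_{n,k}(t)$, which gives
\[
y_{n,k}(t)\,[Q_{n}^{t}]^{\prime}\big(y_{n,k}(t)\big)=C_{1}\big(y_{n,k}(t),t;n\big)F_{n}^{t}\big(y_{n,k}(t)\big)+D_{1}\big(y_{n,k}(t),t;n\big)F_{n-1}^{t}\big(y_{n,k}(t)\big).
\]
This is not yet the advertised denominator, because $D_{1}$ carries a spurious term $-\tfrac1x a_{n}^{2}(t)(b_{n}^{t}-1)$. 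To absorb it, evaluate the connection formula \eqref{[S3]-hxConnForm} at the zero $y_{n,k}(t)$ of $Q_{n}^{t}$: using $A_{1}(x,t;n)=x$ and $B_{1}(t;n)=a_{n}^{2}(t)(b_{n}^{t}-1)$ this yields
\[
y_{n,k}(t)\,F_{n}^{t}\big(y_{n,k}(t)\big)=-a_{n}^{2}(t)(b_{n}^{t}-1)\,F_{n-1}^{t}\big(y_{n,k}(t)\big),
\]
hence $-\tfrac1{y_{n,k}(t)}a_{n}^{2}(t)(b_{n}^{t}-1)F_{n-1}^{t}\big(y_{n,k}(t)\big)=F_{n}^{t}\big(y_{n,k}(t)\big)$. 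Feeding this into the previous display simultaneously clears the $\tfrac1x$ singularity in $D_{1}$ — turning the coefficient of $F_{n-1}^{t}$ into $C_{k,2}(n,t)$ — and adds one extra $F_{n}^{t}\big(y_{n,k}(t)\big)$, which is exactly what promotes $C_{1}\big(y_{n,k}(t),t;n\big)$ to $C_{k,1}(n,t)=C_{1}\big(y_{n,k}(t),t;n\big)+1$. Comparing the resulting polynomials in $y_{n,k}(t)$ with the definitions of $C_{k,1}$ and $C_{k,2}$ confirms the identification, and dividing numerator by denominator gives the claimed equation of motion.

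I expect the only genuinely non-routine step to be this last manipulation — realizing that the connection formula \eqref{[S3]-hxConnForm} evaluated at a zero of $Q_{n}^{t}$ is exactly the identity that both absorbs the $\tfrac1x$ term of $D_{1}$ and supplies the constant shift from $C_{1}$ to $C_{k,1}$. The remaining ingredients (implicit differentiation, the ready-made expression \eqref{dotQ}, and matching polynomial coefficients) are straightforward bookkeeping; as a sanity check one can verify that for odd $n$, where $b_{n}^{t}=1$, one has $F_{n}^{t}\big(y_{n,k}(t)\big)=0$ and $Q_{n-1}^{t}\big(y_{n,k}(t)\big)=b_{n-1}^{t}F_{n-1}^{t}\big(y_{n,k}(t)\big)$ by \eqref{[S3]-InvR-Pnm1}, so the formula collapses to Theorem~\ref{motionzerosF}.
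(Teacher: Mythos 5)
Your proposal is correct and follows essentially the same route as the paper: implicit differentiation of $Q_{n}^{t}\big(y_{n,k}(t)\big)=0$, the identity \eqref{dotQ} for the numerator, and the $x$-derivative of the connection formula \eqref{[S3]-hxConnForm} combined with the vanishing of $Q_{n}^{t}$ at its zeros for the denominator. The only cosmetic difference is that you reach the denominator by evaluating Lemma~\ref{[S3]-LEMMA-2} at $y_{n,k}(t)$ and then using \eqref{[S3]-hxConnForm} at the zero to absorb the $\tfrac1x$ term and produce the shift $C_{1}\mapsto C_{1}+1$, whereas the paper redoes the differentiation of \eqref{[S3]-hxConnForm} directly with \eqref{3.9mehiQ} already imposed; these are the same identities applied in a slightly different order.
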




\begin{proof}
Given $n\geqslant1$ and $t>0$, we have 
\begin{equation}  \label{3.9mehiQ}
Q_n^t\big(y_{n,k}(t)\big)=0,\quad 1\leqslant k\leqslant n.
\end{equation}
If we differentiate the above equation with respect to $t$, we get 
\begin{eqnarray}  \label{Q=0}
{\frac{\partial y_{n,k}}{\partial t}}(t)\,{\frac{\partial Q_n^t}{\partial y}}%
(y) \bigg\rvert_{y=y_{n,k}(t)}+{\frac{\partial Q_n^t}{\partial t}}\big(%
y_{n,k}(t)\big)=0.
\end{eqnarray}


We only need to compute the $y$ derivative of $Q_n^t(y)$ and to combine %
\eqref{dotQ} with the above expression in order to get the result.
Differentiating \eqref{[S3]-hxConnForm} and applying \eqref{3TRR-Monic}, %
\eqref{[S2]-StructRelation}, and \eqref{3.9mehiQ}, we have 
\begin{eqnarray*}
y_{n,k}(t)\,{\frac{\partial Q_n^t}{\partial y}}(y)\bigg\rvert%
_{y=y_{n,k}(t)}&=& C_{k,1}(n,t)F_{n}^{t}\big(y_{n,k}(t)\big)+
C_{k,2}(n,t)F_{n-1}^{t}\big(y_{n,k}(t)\big).
\end{eqnarray*}%
Hence, combining \eqref{dotQ} and the above formula with \eqref{Q=0} the
Theorem follows.
\end{proof}



In the case when $n$ is odd, the above Theorem provides \eqref{last}, which
establishes the formula of Theorem \ref{motionzerosF}.



\subsection{Behavior and monotonicity with $M$ of the zeros of ${%
Q_{2m}^{t}(x)}$}

\label{[SECTION-6.3]-Monotonicity}



Let assume that $y_{n,k}$, $k=1,2,...,n,$ are the zeros of $Q_{n}^{t}(x)$
arranged in an increasing order. From the analysis done before, it is clear
that the zeros $y_{n,s}$ when $n$ is odd are not affected by the mass $M$.
Next, we analyze the behavior of zeros $y_{2m,s}=y_{2m,s}(M)$, $s=1,\ldots
,2m,$ as a function of the mass $M$ and we obtain such a behavior when the
positive real number $M$ goes from zero to infinity. In order to do that, we
use a technique developed in \cite[Lemma 1]{BDR-JCAM02} and \cite[Lemmas 1
and 2]{DMR-ANM10} concerning the behavior and the asymptotics of the zeros
of linear combinations of two $n$-th degree polynomials $h_{n},g_{n}\in 
\mathbb{P}$ with interlacing zeros, such that $f(x)=h_{n}(x)+cg_{n}(x)$, $%
c\geqslant 0$. From now on, we will refer to this technique as the \textit{%
Interlacing Lemma}. Here the linear combination of two polynomials of the
same degree $2m$ is given by (\ref{[S2]-ConnForm-Main}), and\ $F_{2m}^{t}$, $%
G_{2m}$ play the role of $h_{n}(x)$, $g_{n}(x)$ respectively.

In order to apply this technique, we need to show that the hypotheses of the
Interlacing Lemma are fulfilled. First, in Theorem \ref{T1} the interlacing
of the zeros of $F_{2m}^{t}$ and $G_{2m}$ was proved. In our computations,
we will only deal with the zero behavior in the positive real semi-axis,
because the behavior in $\mathbb{R}_{-}$ follows by reflection through the $%
y $-axis by symmetry reasons as usual. Thus, from (\ref{[S2]-ConnForm-Main}%
), the positivity of $K_{2m-1}(0,0;t)$, and Theorem \ref{T1} we are in the
hypothesis of the Interlacing Lemma, and we immediately conclude the
following results about monotonicity, asymptotics, and speed of convergence
for the zeros of $Q_{2m}^{t}(x)$ in terms of the mass $M$.

Let us define the monic polynomials%
\begin{eqnarray*}
G_{m}^{l}(x) &=&x(x-g_{2m,1})(x-g_{2m,2})\cdots (x-g_{2m,m-1}) \\
&=&x(x-x_{2m-1,1}^{[2]})(x-x_{2m-1,2}^{[2]})\cdots (x-x_{2m-1,m-1}^{[2]})
\end{eqnarray*}%
and%
\begin{eqnarray*}
G_{m}^{r}(x) &=&x(x-g_{2m,m+2})(x-g_{2m,m+3})\cdots (x-g_{2m,2m}) \\
&=&x(x-x_{2m-1,m+1}^{[2]})(x-x_{2m-1,m+2}^{[2]})\cdots
(x-x_{2m-1,2m-1}^{[2]}),
\end{eqnarray*}%
such that $G_{2m}^{l}(x)=G_{m}^{l}(x)G_{m}^{r}(x)$.



\begin{theorem}
\label{InterlQKrall} In the negative real semiaxis, the following
interlacing property holds 
\begin{equation*}
x_{2m,1}<y_{2m,1}<g_{2m,1}<x_{2m,2}<y_{2m,2}<\cdots
<g_{2m,m-1}<x_{2m,m}<y_{2m,m}<g_{2m,m}=0.
\end{equation*}%
Moreover, each $y_{n,l}=y_{n,l}(M)$ is an increasing function of $M$ and,
for each $l=1,\ldots ,m$,%
\begin{equation*}
\lim_{M\rightarrow \infty }y_{2m,l}(M)=g_{2m,l}\,,
\end{equation*}%
as well as%
\begin{equation*}
\lim\limits_{M\rightarrow \infty }M[g_{2m,l}-y_{2m,l}]=\dfrac{%
F_{2m}^{t}(g_{2m,l})}{K_{2m-1}(0,0;t)[G_{m}^{l}]^{\prime }(g_{2m,l})}.
\end{equation*}%
Applying symmetry properties through y-axis, in the positive real semiaxis
the following interlacing property holds 
\begin{equation*}
0=g_{2m,m+1}<y_{2m,m+1}<x_{2m,m+1}<g_{2m,m+2}<\cdots
<x_{2m,2m-1}<g_{2m,2m}<y_{2m,2m}<x_{2m,2m}.
\end{equation*}%
Moreover, each $y_{n,r}=y_{n,r}(M)$ is a decreasing function of $M$ and, for
each $r=m+1,\ldots ,2m$,%
\begin{equation*}
\lim_{M\rightarrow \infty }y_{2m,r}=g_{2m,r}\,,
\end{equation*}%
as well as%
\begin{equation*}
\lim\limits_{M\rightarrow \infty }M[y_{2m,r}-g_{2m,r}]=\dfrac{%
-F_{2m}^{t}(g_{2m,r})}{K_{2m-1}(0,0;t)[G_{m}^{r}]^{\prime }(g_{2m,r})}.
\end{equation*}
\end{theorem}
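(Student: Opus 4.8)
The plan is to apply the Interlacing Lemma (as developed in \cite{BDR-JCAM02} and \cite{DMR-ANM10}) to the connection formula \eqref{[S2]-ConnForm-Main}, namely $\tilde{Q}_{2m}^{t}(x)=F_{2m}^{t}(x)+MK_{2m-1}(0,0;t)\,G_{2m}(x)$, treating $c=MK_{2m-1}(0,0;t)\geqslant 0$ as the combination parameter, with $F_{2m}^{t}$ and $G_{2m}=xF_{2m-1}^{t,[2]}$ in the roles of $h_n$ and $g_n$. First I would verify the hypotheses: $F_{2m}^{t}$ and $G_{2m}$ are both monic of degree $2m$, they have real simple zeros, and by Theorem \ref{T1} their zeros strictly interlace (with the caveat that $G_{2m}$ has a double zero at the origin, so that one handles the two central blocks around $x=0$ separately, exactly as in the statement's indexing $g_{2m,m}=0=g_{2m,m+1}$). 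Since $K_{2m-1}(0,0;t)>0$ for all $m$, the sign of $c$ is controlled by $M$ alone, and $c$ ranges over $[0,\infty)$ as $M$ does.

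With the hypotheses in place, the Interlacing Lemma yields directly that each zero $y_{2m,l}(M)$ of $\tilde{Q}_{2m}^{t}$ lies between consecutive zeros of $F_{2m}^{t}$ and $G_{2m}$, which gives the two displayed interlacing chains once one keeps careful track of the orientation: on the negative semiaxis $F_{2m}^{t}$ "leads" and on the positive semiaxis $G_{2m}$ "leads", producing the opposite monotonicity behavior ($y_{2m,l}$ increasing with $M$ on the left, $y_{2m,r}$ decreasing with $M$ on the right). The monotonicity in $M$ is the standard consequence that as $c\to\infty$ the zeros of $h_n+cg_n$ migrate monotonically toward the zeros of $g_n$; since $g_n=G_{2m}$ and its nonzero zeros are precisely the $g_{2m,k}$ (equivalently the $x_{2m-1,k}^{[2]}$), we get $\lim_{M\to\infty}y_{2m,l}(M)=g_{2m,l}$ and $\lim_{M\to\infty}y_{2m,r}(M)=g_{2m,r}$. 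For the symmetry reduction I would invoke that $\omega_t$ is even, so $F_{2m}^{t}$ and $G_{2m}$ are even, hence $\tilde{Q}_{2m}^{t}$ is even and it suffices to establish everything on one semiaxis and reflect.

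The only genuinely computational point is the speed-of-convergence limit. Here I would proceed as in \cite[Lemma 1]{BDR-JCAM02}: write $\tilde{Q}_{2m}^{t}(y_{2m,l})=0$ as $F_{2m}^{t}(y_{2m,l})=-MK_{2m-1}(0,0;t)\,G_{2m}(y_{2m,l})$, and expand $G_{2m}$ near its simple zero $g_{2m,l}$ (recalling $G_{2m}(x)=G_{m}^{l}(x)G_{m}^{r}(x)$, so near $g_{2m,l}$ on the left block the relevant factor is $G_{m}^{l}$ with a nonvanishing $[G_{m}^{l}]'(g_{2m,l})$, while the complementary factor $G_{m}^{r}(g_{2m,l})$ together with the remaining $G_{m}^{l}$ factors combine — one must check that the normalization in the statement has already absorbed these). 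Setting $\varepsilon=g_{2m,l}-y_{2m,l}\to 0$, a first-order Taylor expansion gives $G_{2m}(y_{2m,l})\approx -\varepsilon\,[G_{2m}]'(g_{2m,l})$ and $F_{2m}^{t}(y_{2m,l})\to F_{2m}^{t}(g_{2m,l})$, whence $M\varepsilon\to F_{2m}^{t}(g_{2m,l})/\big(K_{2m-1}(0,0;t)\,[G_{m}^{l}]'(g_{2m,l})\big)$ after identifying $[G_{2m}]'(g_{2m,l})$ with $[G_{m}^{l}]'(g_{2m,l})$ times the constant that the decomposition contributes; the analogous expansion on the positive semiaxis produces the sign change $y_{2m,r}-g_{2m,r}$ and the factor $[G_{m}^{r}]'(g_{2m,r})$. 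The main obstacle, and the step requiring the most care, is precisely this bookkeeping: matching the factorization $G_{2m}^{l}=G_{m}^{l}G_{m}^{r}$ and the placement of the origin's double zero so that the derivative appearing in the limit is exactly $[G_{m}^{l}]'$ (resp. $[G_{m}^{r}]'$) and not $[G_{2m}]'$ — in other words, confirming that all the "spectator" factors of $G_{2m}$ evaluated at $g_{2m,l}$ cancel against the normalization constant ${\kappa}_{2m}$ and the $K_{2m-1}(0,0;t)$ already displayed. Everything else is a direct citation of the Interlacing Lemma.
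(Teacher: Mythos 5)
Your proposal matches the paper's argument: the paper likewise verifies the hypotheses of the Interlacing Lemma (interlacing of the zeros of $F_{2m}^{t}$ and $G_{2m}$ from Theorem \ref{T1}, positivity of $K_{2m-1}(0,0;t)$, and reduction to one semiaxis by symmetry) and then simply cites \cite{BDR-JCAM02} and \cite{DMR-ANM10} to ``immediately conclude'' the interlacing, monotonicity, limits and convergence rates, with no further computation. In particular, the bookkeeping point you flag --- whether the denominator of the rate should carry $[G_{2m}]'(g_{2m,l})=[G_{m}^{l}]'(g_{2m,l})\,G_{m}^{r}(g_{2m,l})$ or only $[G_{m}^{l}]'(g_{2m,l})$ --- is not addressed in the paper either, so your caution there is warranted rather than a defect of your approach.
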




Notice that the mass point at $x=0$ attracts two zeros of $Q_{2m}^{t}(x)$,
i.e. when $M\rightarrow \infty $, it captures $y_{2m,m}$ and $y_{2m,m+1}$ at
the same time.



\section{Numerical experiments}

\label{[SECTION-6]-NumExp}



We next provide some numerical experiments using Mathematica$^{\circledR }$
software, dealing with the zeros of Freud-type polynomials $%
\{Q_{n}^{t}\}_{n\geqslant 0}$. More specifically, we will show the position
of the two symmetric and closest-to-the-origin zeros of some even
polynomials of the sub-sequence $\{Q_{2k}^{t}\}_{k\geqslant 0}\,$. We choose 
$Q_{4}^{t}(x)$ for the following first experiment varying $t$. We show the
location of its zeros for several values of $t$ and $M$, and we also show
the position of the source-charges of the short range potential 
\begin{equation*}
\upsilon _{short}(x)=\frac{1}{2}\ln u(x,t;2m),\quad m=1,2,3,\ldots ,
\end{equation*}
which are the zeros of the polynomial $u(x,t;4)$ defined in \eqref{u}. In
tables \ref{Tabla3}, \ref{Tabla4}, \ref{Tabla5} and \ref{Tabla6} we provide
numerical evidence of the position of its zeros when $t$ is equal to $%
1/2,1,3/2,$ and $2$ respectively, for several choices of $M$. Notice that
the polynomial $u(x,t;4)$ has exactly degree four, and its zeros are always
two real and two simple conjugate complex numbers. We also remark that we
recover the results in \cite[Cor. 3.5]{GAM-ETNA05} when $t=0$.



Figure \ref{PicFreud} illustrates the change in the even Freud-type
polynomials when $M$ varies as described in Theorem \ref{InterlQKrall}. We
enclose the graphs of $Q_{4}^{1}(x)$ for three different values of $M$. The
black continuous, dashed, and dotted lines correspond to $M=0$, $M=0.2,$ and 
$M=0.6$, respectively. We also include, with different tones of gray color,
the graphs of $Q_{3}^{1}(x)$ and $Q_{5}^{1}(x)$, showing that the odd degree
polynomials are not affected by the variation of the mass $M$.


Finally, the last two tables \ref{Tabla7} and \ref{Tabla8} show the position
of the zeros of Freud-type polynomials of $Q_{6}^{1}(x)$ and $Q_{10}^{1}(x)$
and the zeros of the corresponding ghost polynomials. Notice that the zeros
of the ghost polynomials continue to be two real and two complex conjugate
numbers.


\section*{Acknowledgements}

The work of the second and third authors was partially supported by Direcció%
n General de Investigación Científica y Técnica, Ministerio de Economía y
Competitividad of Spain, grant MTM2012-36732-C03-01. 



\clearpage



\begin{figure}[ht]
\centerline{\includegraphics[width=11cm,keepaspectratio]{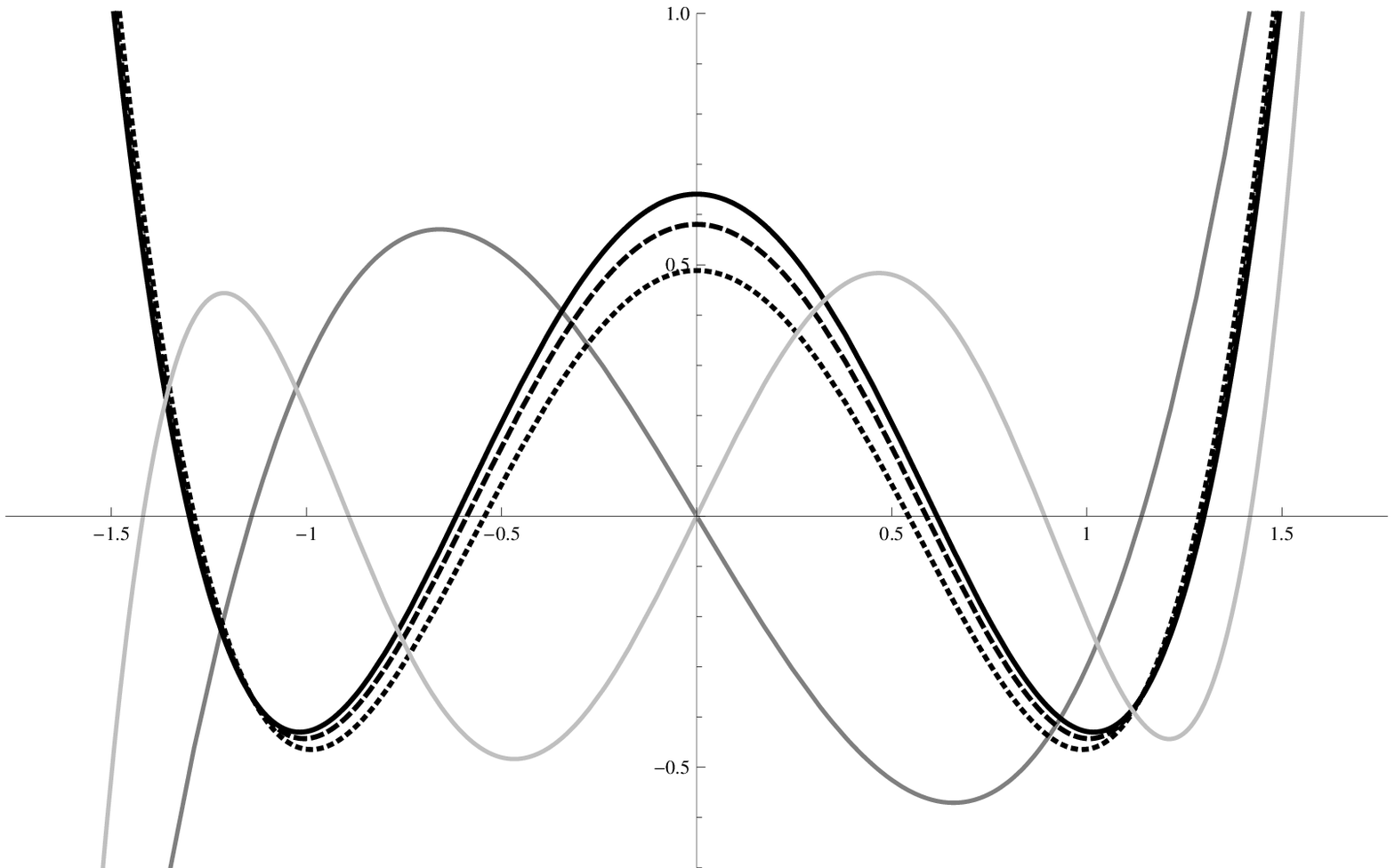}}
\caption{The graphs of $Q_{3}^{1}(x)$ and $Q_{5}^{1}(x)$ (gray) and $%
Q_{4}^{1}(x)$ for some values of $M$ (black lines).}
\label{PicFreud}
\end{figure}



\begin{table}[ht]
\centering
{\footnotesize 
\begin{tabular}{|c|c|c|}
\hline
Polynomial & Power & Coefficient \\[.3ex] \hline
&  &  \\[-1.2ex] 
& 8 & $4 a_n^2(t)\, b_n^t\big[b_{n-1}^t\big]^2 $ \\[1.5ex] 
& 6 & $4 a_n^2(t) \,b_{n-1}^t \bigl\{ b_n^t \left(a_n^2(t) \big[2(1-
b_{n}^t)+b_{n-1}^t (3 b_{n}^t-2)\big]-t
b_{n-1}^t\right)+a_{n+1}^2(t)b_{n-1}^t\bigr\}$ \\[2ex] 
& 4 & $a_n^2(t) \big(b_{n}^t-1\big) \left\{4b_{n-1}^t a_n^2(t)
\left(b_{n-1}^t \left[ (b_{n}^t-1) a_{n-1}^2(t) + (1-3b_{n-1}^t)t\right]%
+2(b_{n-1}^t-1)a_{n+1}^2(t)\right.\right.$ \\[1ex] 
$\mathcal{A}(x,t;n)$ &  & $\left.\left.+2 t b_{n}^t \right)+4 a_n^4(t) \left[%
\big(3 [b_{n}^t]^2-1\big)
                           \lbrack b_{n-1}^t]^2+2b_{n}^tb_{n-1}^t (1-2
b_{n}^t)+(b_{n}^t-1) b_{n}^t\right]-[b_{n-1}^t]^2\right\}$ \\[2ex] 
& 2 & $2 a_n^4(t) \big(b_{n-1}^t-1\big)\big(b_{n}^t-1\big)^2 \left\{2
a_n^2(t)\left[b_{n}^t \left(t-b_{n}^ta_n^2(t) \right)+b_{n-1}^t \left(2
(b_{n}^t-1)a_{n-1}^2(t)\right.\right.\right.$ \\[1ex] 
&  & $\left.\left.\left.+a_n^2(t) \big[b_{n}^t (b_{n}^t+2)-2\big]+t (2-3
b_{n}^t)\right)+a_{n+1}^2(t) \big(b_{n-1}^t-1\big)
 \right]-b_{n-1}^t\right\}$ \\[2ex] 
& 0 & $-a_n^6(t) \big(b_{n-1}^t-1\big)^2 \big(b_{n}^t-1\big)%
^3\left\{1-4a_n^2(t) \big[b_{n}^t-1\big] \left[a_{n-1}^2(t)+a_n^2(t)-t\right]%
\right\}$ \\[1ex] \hline
&  &  \\[-1.2ex] 
& 11 & $-16 a_n^2(t)\, b_{n}^t [b_{n-1}^t]^2$ \\[1.5ex] 
& 9 & $-16 a_n^2(t)\,b_{n-1}^t \big\{b_{n}^t \big(a_n^2(t) \big[b_{n-1}^t (3
b_{n}^t-2)+2(1-b_{n}^t)\big]-2 t b_{n-1}^t\big)
        +b_{n-1}^ta_{n+1}^2(t) \big\}$ \\[2ex] 
& 7 & $-4a_n^2(t) \left\{[b_{n-1}^t]^2 [1-4 t a_{n+1}^2(t)+4 t^2
b_{n}^t+b_{n}^t] +4 b_{n-1}^t a_{n}^2(t) \left(b_{n-1}^t\left[
(b_{n}^t-1)^2a_{n-1}^2(t)\right.\right.\right.$ \\[1ex] 
&  & $\left.\left.+t (6 (1-b_{n}^t) b_{n}^t-1)\right]+2 (b_{n-1}^t-1)
(b_{n}^t-1)a_{n+1}^2(t)+4 t (b_{n}^t-1) b_{n}^t\right)$ \\[1ex] 
&  & $\left.+ 4 (b_{n}^t-1)a_{n}^4(t) \left[\left(3 [b_{n}^t]^2-1\right)
[b_{n-1}^t]^2+2 b_{n}^t (1-2 b_{n}^t) b_{n-1}^t+(b_{n}^t-1) b_{n}^t\right]%
\right\}$ \\[2ex] 
& 5 & $-4 (b_{n}^t-1)a_n^2(t) \left\{ 2 b_{n-1}^ta_n^2(t) \left(b_{n-1}^t
\left(-2 t(b_{n}^t-1)a_{n-1}^2(t) +(6 t^2 +1)b_{n}^t-2
t^2+1\right)\right.\right.$ \\[1ex] 
&  & $\left. -4 t a_{n+1}^2(t) (b_{n-1}^t-1)-\left(4 t^2+1\right)
b_{n}^t-1\right) +4 a_n^4(t) \left(2 b_{n-1}^t \left(t (1-2
b_{n}^t)^2\right.\right.$ \\[1ex] 
$\mathcal{B}(x,t;n)$ &  & $\left.-a_{n-1}^2(t)
(b_{n}^t-1)^2\right)-[b_{n-1}^t]^2 \left(-2 a_{n-1}^2(t) (b_{n}^t-1)^2+t
b_{n}^t (6 b_{n}^t-5)+t\right)$ \\[1ex] 
&  & $\left.+a_{n+1}^2(t) (b_{n}^t-1)(b_{n-1}^t-1)^2-2 t (b_{n}^t-1)
b_{n}^t\right)+4 a_n^6(t) (b_{n-1}^t-1) (b_{n}^t-1) $ \\[1ex] 
&  & $\left.\left(b_{n-1}^t (b_{n}^t (b_{n}^t+2)-2)-[b_{n}^t]^2\right)+t
[b_{n-1}^t]^2\right\}$ \\[2ex] 
& 3 & $-2 a_n^2(t) \big(b_{n}^t-1\big) \left\{2 a_n^4(t) [b_{n}^t-1]
\left(-2 b_{n-1}^t \left(-4 t a_{n-1}^2(t) [b_{n}^t-1]\right.\right.\right.$
\\[1ex] 
&  & $\left.+8 t^2 b_{n}^t+b_{n}^t-4 t^2+1\right)+[b_{n-1}^t]^2 \left(-8 t
a_{n-1}^2(t) [b_{n}^t-1]+12 t^2 b_{n}^t+b_{n}^t-8 t^2-1\right)$ \\[1ex] 
&  & $\left. -4 t a_{n+1}^2(t) [b_{n-1}^t-1]^2+4 t^2
b_{n}^t+b_{n}^t+1\right)-8 a_n^6(t) [b_{n-1}^t-1] [b_{n}^t-1] $ \\[1ex] 
&  & $\left(-[b_{n-1}^t-1] \left(a_{n-1}^2(t) [b_{n}^t-1]^2-2 t
[b_{n}^t]^2\right) -t [b_{n-1}^t-2 b_{n}^t+1]\right) +4 a_n^2(t) \,b_{n-1}^t
[b_{n}^t-1]$ \\[1ex] 
&  & $\left. \left(b_{n-1}^t [2 t-a_{n-1}^2(t)]-t\right)+8 a_n^8(t)
[b_{n-1}^t-1]^2 [b_{n}^t-1]^3+[b_{n-1}^t]^2\right\}$ \\[2ex] 
& 1 & $-4 a_n^4(t)\big(b_{n-1}^t-1\big) \big(b_{n}^t-1\big)^2 \left(1-4
a_n^2(t) [b_{n}^t-1] [a_{n-1}^2(t)+a_n^2(t)-t]\right) $ \\[1ex] 
&  & $\left(t a_n^2(t) [b_{n-1}^t-1] [b_{n}^t-1]+b_{n-1}^t\right)$ \\%
[1ex] \hline
\end{tabular}
}
\caption{Coefficients of the polynomials $\mathcal{A}(x,t;n)$ and $\mathcal{B%
}(x,t;n)$ for every integer $n$.}
\label{Tabla1:coef}
\end{table}



\begin{table}[ht]
\centering
\begin{tabular}{|c|c|c|}
\hline
Polynomial & Power & Coefficient \\[.3ex] \hline
&  &  \\[-1.2ex] 
& 8 & $4 a_{2n}^2(t)\, b_{2n}^t$ \\[1.5ex] 
& 6 & $4a_{2n}^2(t)\big(a_{{2n}+1}^{2}(t)+b_{2n}^t\big[a_{{2n}%
}^{2}(t)\,b_{2n}^t-t\big]\big)$ \\[2ex] 
$\mathcal{A}(x,t;2n)$ & 4 & $-a_{2n}^2(t)\big(1-4a_{{2n}}^{2}(t)\big[a_{{2n}%
}^{2}(t)+a_{{2n}-1}^{2}(t)-t\big]\big[b_{2n}^t-1\big]\big)\big(b_{2n}^t-1%
\big)$ \\[2ex] 
& 2 & $0$ \\[2ex] 
& 0 & $0$ \\[1ex] \hline
&  &  \\[-1.2ex] 
& 11 & $-16 a_{2n}^2(t)\, b_{{2n}}^t $ \\[1.5ex] 
& 9 & $-16a_{2n}^{2}(t)\big(a_{{2n}+1}^{2}(t)+b_{2n}^t\big[a_{{2n}%
}^{2}(t)\,b_{2n}^t-2t\big]\big)$ \\[2ex] 
& 7 & $-4a_{2n}^{2}(t)\big\{1-4ta_{{2n}+1}^{2}(t)+4a_{{2n}%
}^{4}(t)[b_{2n}^t-1]^2+b_{2n}^t+4t^2b_{2n}^t$ \\[1ex] 
&  & $+4a_{{2n}}^{2}(t)\big(a_{{2n}-1}^{2}(t)\big(b_{2n}^t-1\big)^2-t\big[1+2%
\big(b_{2n}^t-1\big)b_{2n}^t\big]\big)\big\}$ \\[2ex] 
$\mathcal{B}(x,t;2n)$ & 5 & $-4ta_{2n}^{2}(t)\big(1-4a_{{2n}}^{2}(t)\big[a_{{%
2n}}^{2}(t)+a_{{2n}-1}^{2}(t)-t\big]
      \big[b_{2n}^t-1\big]\big)\big(b_{2n}^t-1\big)$ \\[2ex] 
& 3 & $-2a_{2n}^{2}(t)\big(1-4a_{{2n}}^{2}(t)\big[a_{{2n}}^{2}(t)+a_{{2n}%
-1}^{2}(t)-t\big]\big[b_{2n}^t-1\big]\big)\big(b_{2n}^t-1\big)$ \\[2ex] 
& 1 & $0$ \\[1ex] \hline
\end{tabular}%
\caption{Coefficients of the polynomials $\mathcal{A}(x,t;2n)$ and $\mathcal{%
B}(x,t;2n)$ for every integer $n$.}
\label{Tabla2:coefeven}
\end{table}



\begin{table}[ht]
\centering
\renewcommand{\arraystretch}{1.3} {\footnotesize 
\begin{tabular}{@{}rrrrrrrrrrr}
\toprule & \multicolumn{7}{c}{$t = 0.5$} & \phantom{} &  &  \\ 
\cmidrule{2-8} & $M=0$ & $M=0.002$ & $M=0.05$ & $M=0.5$ & $M=5$ & $M=10$ & $%
M=50$ &  &  &  \\ 
\midrule $Q^{t}_4(x)$ &  &  &  &  &  &  &  &  &  &  \\ 
& $\pm$ 1.1640 & $\pm$1.1639 & $\pm$1.1623 & $\pm$1.1516 & $\pm$1.1318 & $%
\pm $1.1286 & $\pm 1.1257$ &  &  &  \\ 
& $\pm$ 0.4839 & $\pm$0.4836 & $\pm$0.4755 & $\pm$0.4154 & $\pm$0.2256 & $%
\pm $0.1689 & $\pm0.0794$ &  &  &  \\ 
$u(x,t;4)$ &  &  &  &  &  &  &  &  &  &  \\ 
& $\pm0.94861 i$ & $\pm0.94869 i$ & $\pm 0.9505i$ & $\pm0.9618i$ & $%
\pm0.9827 i$ & $\pm0.9863i$ & $\pm 0.9898i$ &  &  &  \\ 
& 0 & $\pm$ 0.0144 & $\pm$0.0689 & $\pm$0.1528 & $\pm$0.1219 & $\pm$0.0942 & 
$\pm 0.0455$ &  &  &  \\ 
\bottomrule &  &  &  &  &  &  &  &  &  & 
\end{tabular}
}
\caption{Zeros of $Q^{0.5}_4(x)$ and $u(x,0.5;4)$ for some values of $M$.}
\label{Tabla3}
\end{table}



\begin{table}[ht]
\centering
\renewcommand{\arraystretch}{1.3} {\footnotesize 
\begin{tabular}{@{}rrrrrrrrrrr}
\toprule & \multicolumn{7}{c}{$t = 1$} & \phantom{} &  &  \\ 
\cmidrule{2-8} & $M=0$ & $M=0.002$ & $M=0.05$ & $M=0.5$ & $M=5$ & $M=10$ & $%
M=50$ &  &  &  \\ 
\midrule $Q^{t}_4(x)$ &  &  &  &  &  &  &  &  &  &  \\ 
& $\pm$1.3002 & $\pm$1.3001 & $\pm$1.2988 & $\pm$1.2891 & $\pm$1.2659 & $\pm$%
1.2615 & $\pm 1.2570$ &  &  &  \\ 
& $\pm$0.6156 & $\pm$0.6153 & $\pm$0.6084 & $\pm$0.5533 & $\pm$0.3335 & $\pm$%
0.2551 & $\pm 0.1227$ &  &  &  \\ 
$u(x,t;4)$ &  &  &  &  &  &  &  &  &  &  \\ 
& $\pm0.7653i$ & $\pm0.7654i$ & $\pm0.7687i$ & $\pm0.7894i$ & $\pm 0.8354i$
& $\pm0.8454 i$ & $\pm0.8563i$ &  &  &  \\ 
& 0 & $\pm$0.0171 & $\pm$0.0827 & $\pm0.1962$ & $\pm 0.1800$ & $\pm0.1424$ & 
$\pm 0.0703$ &  &  &  \\ 
\bottomrule &  &  &  &  &  &  &  &  &  & 
\end{tabular}
}
\caption{Zeros of $Q^{1}_4(x)$ and $u(x,1;4)$ for some values of $M$.}
\label{Tabla4}
\end{table}



\begin{table}[ht]
\centering
\renewcommand{\arraystretch}{1.3} {\footnotesize 
\begin{tabular}{@{}rrrrrrrrrrr}
\toprule & \multicolumn{7}{c}{$t = 1.5$} & \phantom{} &  &  \\ 
\cmidrule{2-8} & $M=0$ & $M=0.002$ & $M=0.05$ & $M=0.5$ & $M=5$ & $M=10$ & $%
M=50$ &  &  &  \\ 
\midrule $Q^{t}_4(x)$ &  &  &  &  &  &  &  &  &  &  \\ 
& $\pm$1.4485 & $\pm$1.4484 & $\pm$1.4474 & $\pm$1.4395 & $\pm$1.4120 & $\pm$%
1.4047 & $\pm1.3964$ &  &  &  \\ 
& $\pm$0.8059 & $\pm$0.8057 & $\pm$0.8010 & $\pm$0.7603 & $\pm$0.5363 & $\pm$%
0.4290 & $\pm 0.2176$ &  &  &  \\ 
$u(x,t;4)$ &  &  &  &  &  &  &  &  &  &  \\ 
& $\pm0.5175i$ & $\pm0.5179i$ & $\pm0.5263i$ & $\pm0.5714i$ & $\pm0.663i$ & $%
\pm0.6899i$ & $\pm0.7282 i$ &  &  &  \\ 
& $0$ & $\pm0.0233$ & $\pm$0.1125 & $\pm0.2770$ & $\pm0.3021$ & $\pm0.2477$
& $\pm0.1261$ &  &  &  \\ 
\bottomrule &  &  &  &  &  &  &  &  &  & 
\end{tabular}
}
\caption{Zeros of $Q^{1.5}_4(x)$ and $u(x,1.5;4)$ for some values of $M$.}
\label{Tabla5}
\end{table}



\begin{table}[ht]
\centering
\renewcommand{\arraystretch}{1.3} {\footnotesize 
\begin{tabular}{@{}rrrrrrrrrrr}
\toprule & \multicolumn{7}{c}{$t = 2$} & \phantom{} &  &  \\ 
\cmidrule{2-8} & $M=0$ & $M=0.002$ & $M=0.05$ & $M=0.5$ & $M=5$ & $M=10$ & $%
M=50$ &  &  &  \\ 
\midrule $Q^{t}_4(x)$ &  &  &  &  &  &  &  &  &  &  \\ 
& $\pm$1.60437 & $\pm$1.6043 & $\pm$1.6038 & $\pm$1.5989 & $\pm$1.5717 & $%
\pm $1.5594 & $\pm1.5406$ &  &  &  \\ 
& $\pm$1.0429 & $\pm$1.0428 & $\pm$1.0408 & $\pm$1.0220 & $\pm$0.8736 & $\pm$%
0.7644 & $\pm 0.4490$ &  &  &  \\ 
$u(x,t;4)$ &  &  &  &  &  &  &  &  &  &  \\ 
& $0$ & $\pm0.0605i$ & $\pm0.1906i$ & $\pm0.3316i$ & $\pm0.4564i$ & $%
\pm0.4885i$ & $\pm0.5748 i$ &  &  &  \\ 
& $\pm0.1487$ & $\pm0.1613$ & $\pm$0.2539 & $\pm0.4256$ & $\pm0.5625$ & $%
\pm0.5117$ & $\pm0.2832$ &  &  &  \\ 
\bottomrule &  &  &  &  &  &  &  &  &  & 
\end{tabular}
}
\caption{Zeros of $Q^{2}_4(x)$ and $u(x,2;4)$ for some values of $M$.}
\label{Tabla6}
\end{table}



\begin{table}[ht]
\centering
\renewcommand{\arraystretch}{1.3} {\footnotesize 
\begin{tabular}{@{}rrrrrrrrrrr}
\toprule & \multicolumn{7}{c}{$t = 1$} & \phantom{} &  &  \\ 
\cmidrule{2-8} & $M=0$ & $M=0.002$ & $M=0.05$ & $M=0.5$ & $M=5$ & $M=10$ & $%
M=50$ &  &  &  \\ 
\midrule $Q^{t}_6(x)$ &  &  &  &  &  &  &  &  &  &  \\ 
& $\pm$1.51614 & $\pm$1.51612 & $\pm$1.5153 & $\pm$1.5103 & $\pm$1.5018 & $%
\pm $1.5005 & $\pm1.4993$ &  &  &  \\ 
& $\pm$1.0730 & $\pm$1.0729 & $\pm$1.0711 & $\pm$1.0600 & $\pm$1.0403 & $\pm 
$1.0374 & $\pm1.0346$ &  &  &  \\ 
& $\pm$0.4530 & $\pm$0.4526 & $\pm$ 0.4445 & $\pm$0.3846 & $\pm$0.2044 & $%
\pm $0.1524 & $\pm0.0714$ &  &  &  \\ 
$u(x,t;6)$ &  &  &  &  &  &  &  &  &  &  \\ 
& $\pm0.9164i$ & $\pm0.9165i$ & $\pm0.9185i$ & $\pm0.9300i$ & $\pm0.9501i$ & 
$\pm0.9533i$ & $\pm0.9564 i$ &  &  &  \\ 
& $\pm0$ & $\pm0.0139$ & $\pm$0.0665 & $\pm0.1444$ & $\pm0.1109$ & $%
\pm0.0852 $ & $\pm0.0409$ &  &  &  \\ 
\bottomrule &  &  &  &  &  &  &  &  &  & 
\end{tabular}
}
\caption{Zeros of $Q^{1}_6(x)$ and $u(x,1;6)$ for some values of $M$.}
\label{Tabla7}
\end{table}



\begin{table}[ht]
\centering
\renewcommand{\arraystretch}{1.3} {\footnotesize 
\begin{tabular}{@{}rrrrrrrrrrr}
\toprule & \multicolumn{7}{c}{$t = 1$} & \phantom{} &  &  \\ 
\cmidrule{2-8} & $M=0$ & $M=0.002$ & $M=0.05$ & $M=0.5$ & $M=5$ & $M=10$ & $%
M=50$ &  &  &  \\ 
\midrule $Q^{t}_{10}(x)$ &  &  &  &  &  &  &  &  &  &  \\ 
& $\pm$1.79469 & $\pm$1.79467 & $\pm$1.7942 & $\pm$1.7921 & $\pm$1.7896 & $%
\pm $1.7893 & $\pm1.7890$ &  &  &  \\ 
& $\pm$1.49286 & $\pm$1.49284 & $\pm$1.4922 & $\pm$1.4888 & $\pm$1.4849 & $%
\pm $1.4845 & $\pm1.4841$ &  &  &  \\ 
& $\pm$1.17419 & $\pm$1.17414 & $\pm$1.1730 & $\pm$1.1674 & $\pm$1.1608 & $%
\pm $1.1600 & $\pm1.1593$ &  &  &  \\ 
& $\pm$0.7931 & $\pm$0.7930 & $\pm$0.7907 & $\pm$0.7789 & $\pm$0.7647 & $\pm 
$0.7630 & $\pm0.7616$ &  &  &  \\ 
& $\pm$0.2950 & $\pm$0.2947 & $\pm$ 0.2858 & $\pm$0.2291 & $\pm$0.1067 & $%
\pm $0.0780 & $\pm0.0359$ &  &  &  \\ 
$u(x,t;10)$ &  &  &  &  &  &  &  &  &  &  \\ 
& $\pm1.1107i$ & $\pm1.1108i$ & $\pm1.1117i$ & $\pm1.1166i$ & $\pm1.1223i$ & 
$\pm1.1230i$ & $\pm1.1236 i$ &  &  &  \\ 
& $\pm0$ & $\pm0.0110$ & $\pm$0.0509 & $\pm0.0950$ & $\pm0.0589$ & $%
\pm0.0440 $ & $\pm0.0206$ &  &  &  \\ 
\bottomrule &  &  &  &  &  &  &  &  &  & 
\end{tabular}
}
\caption{Zeros of $Q^{1}_{10}(x)$ and $u(x,1;10)$ for some values of $M$.}
\label{Tabla8}
\end{table}



\end{document}